\newcommand{\be}{\beta}
\newcommand{\id}{\mbox{Id}}
\newcommand{\Dti}{\tilde{D}}
\newcommand{\tha}{\hat{t}}
\newcommand{\Xha}{\hat{X}}
\newcommand{\Xti}{\tilde{X}}
\newcommand{\bx}{\mathbf{X}}
\newcommand{\cq}{\mathcal{Q}}
\newcommand{\bu}{\mathbf{U}}
\newcommand{\bv}{\mathbf{V}}
\newcommand{\by}{\mathbf{Y}}
\newcommand{\C}{\mathbb C}
\newcommand{\R}{\mathbb R}
\newcommand{\N}{\mathbb N}
\newcommand{\ca}{\mathcal A}
\newcommand{\cb}{\mathcal B}
\newcommand{\cac}{\mathcal C}
\newcommand{\cj}{\mathcal J}
\newcommand{\cl}{\mathcal L}
\newcommand{\cn}{\mathcal N}
\newcommand{\cp}{\mathcal P}
\newcommand{\cs}{\mathcal S}
\newcommand{\al}{\alpha}
\newcommand{\der}{\delta}
\newcommand{\ep}{\varepsilon}
\newcommand{\ga}{\gamma}
\newcommand{\ka}{\kappa}
\newcommand{\la}{\lambda}
\newcommand{\si}{\sigma}
\newcommand{\vp}{\varphi}
\newcommand{\lln}{\left|}
\newcommand{\rrn}{\right|}
\newtheorem{theorem}{Theorem}[section]
\newtheorem{corollary}[theorem]{Corollary}
\newtheorem{definition}[theorem]{Definition}
\newtheorem{lemma}[theorem]{Lemma}
\newtheorem{notation}[theorem]{Notation}
\newtheorem{proposition}[theorem]{Proposition}
\theoremstyle{remark}
\newtheorem{remark}[theorem]{Remark}
\date{\today}
\begin{document}

\title[On the rough-path approach to non-commutative stochastic calculus]{On the rough-paths approach to non-commutative stochastic calculus}

\author{Aurélien Deya \and René Schott}

\address{
{\it Aurélien Deya:}
{\rm Institut \'Elie Cartan Nancy, B.P. 239,
54506 Vandoeuvre-l\`es-Nancy Cedex, France}.
{\it Email: }{\tt Aurelien.Deya@iecn.u-nancy.fr}
\newline
$\mbox{ }$\hspace{0.1cm}
{\it René Schott:}
{\rm Institut \'Elie Cartan Nancy, B.P. 239,
54506 Vandoeuvre-l\`es-Nancy Cedex, France}.
{\it Email: }{\tt rene.schott@univ-lorraine.fr}}

\keywords{non-commutative stochastic calculus; rough paths theory; free Brownian motion; approximation results}

\subjclass[2010]{46L53,60H05,60F17}

\begin{abstract}
We study different possibilities to apply the principles of rough paths theory in a non-commutative probability setting. First, we extend previous results obtained by Capitaine, Donati-Martin and Victoir in Lyons' original formulation of rough paths theory. Then we settle the bases of an alternative non-commutative integration procedure, in the spirit of Gubinelli's controlled paths theory, and which allows us to revisit the constructions of Biane and Speicher in the free Brownian case. New approximation results are also derived from the strategy. 
\end{abstract}

\maketitle

\section{Introduction}

Within the last fifteen years, rough paths theory have shed a drastically new light on the bases of stochastic calculus. It has led to a far better understanding (not to say an unprecedented interpretation in some cases) of systems such that
\begin{equation}\label{syst-bas}
dY^i_t=\sum_{j=1}^n \si_j^i(Y_t) \, dX^j_t \quad , \quad \si_j^i:\R^m \to \R \quad , \quad i=1,\ldots,m,
\end{equation}
for a large class of continuous stochastic processes $X:\Omega \times [0,T] \to \R^n$ (see \cite{friz-victoir-book} for a thourough account on the theory). One of the core principles of the approach can be somehow summed up as follows: the whole dynamics of the system (\ref{syst-bas}) is encoded through a certain number of \emph{iterated integrals} of $X$, i.e., $\int dX$, $\iint dX \otimes dX$, $\iiint dX \otimes dX \otimes dX$, etc. Thus, a good comprehension of these objects allows one to define and solve (\ref{syst-bas}) in a natural and completely deterministic way, and it also immediately yields continuity results for the equation (i.e., when $Y$ is seen as a function of the noise $X$). As a spectacular application, the method made it possible to handle systems driven by non-martingale Gaussian processes and which were ruled out by any other approach to stochastic calculus (see \cite[Chapter 15]{friz-victoir-book}).

\smallskip

When $X$ is a standard Brownian motion, it is a well-known fact that only the first two iterated integrals are required by the rough paths procedure, that is to say the machinery only involves the process together with its \emph{Lévy area} 
$$(\iint dX \otimes dX)^{ij}_{s,t}:=\int_s^t \int_s^u dX^i_v \, dX^j_u, \quad i,j=1,\ldots,n,$$
understood in Itô or Stratonovich sense. In particular, the solution of (\ref{syst-bas}) is known to be a continuous functional of the \emph{pair} $(X,\iint dX \otimes dX)$, which provides us with a fresh perpective on many results of stochastic analysis: large deviation principles, support theorems, approximation schemes, etc.

\smallskip

The rough-paths strategy has been extended to several non-standard stochastic systems within the last few years, proving the great flexibility of its main principles: delay equations \cite{delay}, Volterra systems \cite{volterra}, backward SDEs \cite{backward} or stochastic PDEs \cite{REE,RHE,hairer}, not to mention but a few extensions. In the present paper, we propose to make one step further in this direction by adapting the outlines of the theory in a radically different background: the \emph{non-commutative probability} setting.

\smallskip

The foundations of non-commutative stochastic calculus,  at least when applied to the central \emph{free Brownian motion}, have been essentially laid by Biane and Speicher in \cite{biane-speicher}, although some pioneering ideas go back to \cite{KS,S}. These constructions have since provided a rigourous mathematical framework to study free Gibbs states \cite{biane-speicher-2} as well as numerous free SDEs connected with large random matrices models \cite{capitaine-donati-2,demni,guionnet-shly,kargin} (see also \cite{KNPS} for an application to the development of free Malliavin calculus). Even if the notion of a non-commutative process is at first sight very different from the definition of a standard real-valued stochastic process, Biane and Speicher have shown that many classical results surprisingly admit direct counterparts in the non-commutative setting. Thus, one recovers the natural construction procedure towards a stochastic integral $\int U \cdot dX \cdot V$, and then the existence of a Burkholder-Gundy type inequality, Itô's formula, etc.

\smallskip

In this context, the aim of the paper is therefore twofold:

\smallskip

\noindent
\textbf{1)} We wish to illustrate the adaptability of the rough-paths ideas through a non-standard setting, with in particular the exhibition of a suitable notion of 'non-commutative' Lévy area.

\smallskip

\noindent
\textbf{2)} It gives us the opportunity to revisit the constructions of \cite{biane-speicher} via an alternative approach, with new approximation results as a spin-off of the strategy.

\

It is worth mentioning that some attempts to apply rough-paths ideas in the non-commutative setting were already made by Capitaine and Donati-Martin in \cite{capitaine-donati} and then by Victoir in \cite{vic-free}, following Lyons' original formulation of the theory (as it is reported in \cite{lyons-book}). We shall briefly review their approach in a first part of the paper (Section \ref{sec:lyons}) and show how their results actually extend beyond the free Brownian motion case to the so-called $q$-Brownian motion, for every $q\in (-1,1)$.

\smallskip

Nevertheless, while trying to apply the results of \cite{capitaine-donati,vic-free} to different concrete equations, it soon appeared to us that the \emph{classical} rough-paths approach used in the latter references had a limited scope. Indeed, at this point, it turns out that the assumptions in Lyons' theory cannot cover some of the most common non-commutative differential equations, namely equations of the form
\begin{equation}\label{sys-base-intro}
Y_0=A \quad , \quad dY_t=\sum_{i=1}^m f_i(Y_t) \cdot dX_t \cdot g_i(Y_t),
\end{equation}
where $X:[0,T]\to \ca$ is a non-commutative Hölder process (see Section \ref{sec:bases-non-commut}), $f_i(Y_t),g_i(Y_t)$ are understood in the functional calculus sense and $\cdot$ refers to the algebra product (see Section \ref{sec:bases-non-commut} for details on these notions). 

\smallskip

In order to overcome this difficulty and to be able to handle (\ref{sys-base-intro}), we have then turned to (a suitable adaptation of) an alternative approach due to Gubinelli for standard rough systems \cite{gubi} and often called \emph{controlled paths theory}. This nice combination of algebraic and analytical structures will prove to be sufficiently flexible for us to settle the bases of an efficient integration procedure, which takes the specificities of (\ref{sys-base-intro}) into account and also immediately extends the classical Lebesgue integration theory.

\smallskip

When applied to a free Brownian motion $X$, our considerations consistently overlap with the constructions of Biane and Speicher, as reported in Section \ref{sec:appli}. They also cast new light on the Itô/Stratonovich comparison of non-commutative integrals, as well as on the approximation issue of equation (\ref{sys-base-intro}) (i.e., when $X$ is replaced with a smooth approximation $X^n$).

\

To be more specific, the paper is organized as follows. In Section \ref{sec:bases-non-commut}, we label a few basic definitions related to the framework of our study, i.e., the non-commutative probability setting. Some additional details regarding tensor product of non-commutative probability spaces and functional calculus are also provided for further use. In Section \ref{sec:lyons}, we briefly recall the main principles of Lyons' original rough paths theory and then extend previous results obtained in this background. Section \ref{sec:gubi} contains the core of our study: with the help of a few fundamental tools borrowed from Gubinelli's controlled paths theory, we construct a general integration procedure with respect to a non-commutative Hölder path $X$. Some continuity results are also naturally derived from the strategy, in the spirit of the classical rough paths theory. Application to the free Brownian motion is finally examined in Section \ref{sec:appli}, which also contains a discussion on some possible future extensions of our approach.

\section{Non-commutative probability spaces}\label{sec:bases-non-commut}

The non-commutative probability setting naturally arises from the asymptotic study of random matrices with size tending to infinity, following the breakthrough of Voiculescu \cite{voiculescu}. For the sake of conciseness, we shall however not elaborate on the close links between large random matrices and non-commutative probability theory, and we refer the reader to \cite{biane} for further details on this point. We will here content ourselves with a direct introduction of this abstract framework, which appears as a particular aspect of the general  $C^\ast$-algebra theory.

\subsection{Non-commutative probability theory in a nutshell} A {\em non-commutative probability space} is a von Neumann algebra $\mathcal{A}$ (that is,
an algebra of bounded operators on a complex
separable Hilbert space,
closed under adjoint and convergence in the weak operator topology) equipped with a {\em trace} $\vp$,
that is, a unital linear functional (meaning preserving the identity) which is weakly
continuous, positive (meaning $\vp(X)\ge 0$
whenever $X$ is a non-negative element of $\mathcal{A}$; i.e.\ whenever $X=YY^\ast$
for some $Y\in\mathcal{A}$), faithful (meaning that if
$\vp(YY^\ast)=0$ then $Y=0$), and tracial (meaning that $\vp(XY)=\vp(YX)$ for all
$X,Y\in\mathcal{A}$, even though in general $XY\ne YX$).

\smallskip

Unless otherwise specified, the notation $\|.\|$ will henceforth refer to the operator norm in $\ca$. Note that we will also appeal to the $L^p(\vp)$ norms ($p\geq 1$) in our study, defined as
$$\|X\|_{L^p(\vp)}:=\big( \vp( |X|^p) \big)^{1/p} \quad , \quad |X|:=\sqrt{XX^\ast},$$
with the following basic properties in mind (see e.g. \cite[Proposition 3.7]{nicaspeicher}):
\begin{equation}\label{prop:norm-op}
\|X\|:=\lim_{p\to \infty} \|X\|_{L^p(\vp)}.
\end{equation}

\

In a non-commutative probability space $(\ca,\vp)$, we refer to the self-adjoint elements of the
algebra as
{\em random variables}.  Any
random variable $X$ has
a {\em law}: this is the unique compactly supported
probability measure $\mu$ on $\R$
with the same moments as $X$; in other words, $\mu$ is such that
\begin{equation}\label{mu}
\int_{\R} Q(x) d\mu(x) = \vp(Q(X)),
\end{equation}
for any real polynomial $Q$. Thus, and as in the classical probability theory, the focus is more on the \emph{laws} (which, in this context, is equivalent to the sequence of \emph{moments}) of the random variables than on the underlying space $(\ca,\vp)$ itself. For instance, we say that a sequence $\{X_k\}_{k\geq 1}$ of random variables such that $X_k \in (\ca_k,\vp_k)$ converges to $X\in (\ca,\vp)$ if, for every positive integer $r$, one has $\vp_k(X_k^r ) \to \vp ( X^r)$ as $k\to\infty$. In the same way, we consider here that any family $\{X^{(i)}\}_{i\in I}$ of random variables on $(\ca,\vp)$ is `characterized' by the set of all of its \emph{joint moments} $\vp(X^{(i_1)} \cdots X^{(i_r)})$ ($i_1,\ldots,i_r \in I$, $r\in \N$), and we say that $\{X^{(i)}_k\}_{i\in I}$ converges to $\{X^{(i)}\}_{i\in I}$ (when $k\to\infty$) if the convergence of the joint moments holds true (see \cite[Lecture 4]{nicaspeicher} for further details on non-commutative random systems).

\smallskip


Let us now recall the definition of one of the nicest examples of a non-commutative process, introduced by Bo{\.z}ejko and Speicher in \cite{q-bm} (see also \cite{q-gauss}), and which will serve us as a guiding example: the \emph{$q$-Brownian motion}. 

\begin{definition}\label{d:crossing}
1. Let $r$ be an even integer. A \emph{pairing} of $\{1,\ldots,r\}$ is any partition of $\{1,\ldots,r\}$ into $r/2$ disjoint subsets, each of cardinality $2$. We denote by $\mathcal{P}_2(\{1,\ldots,r\})$ the set of all pairings of $\{1,\ldots,r\}$.

2. When $\pi \in \mathcal{P}_2(\{1,\ldots,r\})$, a \emph{crossing} in $\pi$ is any set of the form $\{\{x_1,y_1\},\{x_2,y_2\}\}$ with $\{x_i,y_i\}\in \pi$ and $x_1 < x_2 <y_1 <y_2$. The number of such crossings is denoted by $\mathrm{Cr}(\pi)$. 
\end{definition}

\begin{definition}
Fix $q\in (-1,1)$. A $q$-Brownian motion on some non-commutative probability space $(\ca,\vp)$ is a
collection $\{X_t\}_{t\geq 0}$ of random variables on $(\ca,\vp)$ satisfying that, for every integer $r\geq $1 and
every $t_1,\ldots,t_r \geq 0$,
\begin{equation}\label{form-q-bm}
\vp\big( X_{t_1}\cdots X_{t_r}\big)=\sum_{\pi \in \mathcal{P}_2(\{1,\ldots,r\})} q^{\mathrm{Cr}(\pi)} \prod_{\{i,j\}\in \pi} (t_i \wedge t_j).
\end{equation}
\end{definition}

\begin{remark}
Similarly to the classical Brownian case, we can actually rely on a suitable isometry property to continuously extend $X$ to a map $X:L^2(\R_+)\to \ca$ such that $X(1_{[a,b]}):=X_b-X_a$. Formula (\ref{form-q-bm}) then becomes: for all $f_1,\ldots,f_r \in L^2(\R_+)$,
\begin{equation}\label{form-q-bm-gene}
\vp\big( X(f_1)\cdots X(f_r)\big)=\sum_{\pi \in \mathcal{P}_2(\{1,\ldots,r\})} q^{\mathrm{Cr}(\pi)} \prod_{\{i,j\}\in \pi} \langle f_i,f_j \rangle_{L^2(\R)}.
\end{equation}
\end{remark}

\

Let us also quote a few basic properties of these processes (see e.g. \cite{q-gauss}):

\begin{proposition}\label{prop:base-q-bm}
Fix $q\in (-1,1)$ and let $\{X_t\}_{t\geq 0}$ be a $q$-Brownian motion defined on some non-commutative probability space $(\ca,\vp)$. 
Then for every $0\leq s<t$, $X_t-X_s \sim \sqrt{t-s}\, X_1$. Moreover, the law $\nu_q$ of $X_1$ is absolutely continuous with respect to the Lebesgue measure; its density is supported on $\big[\frac{-2}{\sqrt{1-q}},\frac{2}{\sqrt{1-q}} \big]$ and is given, within this interval,
by
\[
\nu_q(dx)= \frac{1}{\pi} \sqrt{1-q} \sin \theta \prod_{n=1}^\infty (1-q^n) |1-q^n e^{2i\theta}|^2, \quad \text{where} \ x=\frac{2\cos \theta}{\sqrt{1-q}} \,\,\mbox{ with } \theta \in [0,\pi].
\]
\end{proposition}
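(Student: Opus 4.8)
The plan is to treat the two assertions separately: the scaling and stationarity of the increments is pure bookkeeping on pairings, while the explicit density comes from the classical theory of $q$-Hermite polynomials.

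First I would prove $X_t - X_s \sim \sqrt{t-s}\, X_1$ by matching moments, which suffices since both sides are self-adjoint elements with (by definition) compactly supported laws. Using the isometric extension $X:L^2(\R_+)\to\ca$ of the preceding Remark together with formula (\ref{form-q-bm-gene}) applied with $f_1=\cdots=f_r=1_{[s,t]}$, each $\pi\in\cp_2(\{1,\ldots,r\})$ contributes $q^{\mathrm{Cr}(\pi)}\prod_{\{i,j\}\in\pi}\langle 1_{[s,t]},1_{[s,t]}\rangle_{L^2(\R)}=q^{\mathrm{Cr}(\pi)}(t-s)^{r/2}$, there being $r/2$ blocks; hence $\vp((X_t-X_s)^r)=(t-s)^{r/2}\sum_{\pi}q^{\mathrm{Cr}(\pi)}=(t-s)^{r/2}\vp(X_1^r)$ for $r$ even, both sides vanishing for $r$ odd, and this is exactly $\vp((\sqrt{t-s}\,X_1)^r)$.

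For the second assertion, set $m_r:=\vp(X_1^r)=\sum_{\pi\in\cp_2(\{1,\ldots,r\})}q^{\mathrm{Cr}(\pi)}$ for $r$ even, $m_r:=0$ for $r$ odd. I would first show, via Flajolet's encoding of moments as weighted Dyck paths (an up-step opening a new arc, a down-step from height $k$ closing one of the $k$ currently open arcs and thus carrying the weight $1+q+\cdots+q^{k-1}$), that the monic orthogonal polynomials $(P_n)$ attached to $(m_r)$ obey the three-term recursion $xP_n=P_{n+1}+[n]_q P_{n-1}$, $P_{-1}=0$, $P_0=1$, with $[n]_q:=\frac{1-q^n}{1-q}$; these are, up to rescaling, the continuous $q$-Hermite polynomials. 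Since $q\in(-1,1)$ the parameters $[n]_q$ are bounded, so the associated Jacobi matrix is a bounded self-adjoint operator, and the orthogonality measure $\nu_q$ of $(P_n)$ -- equivalently, the spectral measure of that matrix at its first basis vector -- is a compactly supported probability measure with $\int x^r\, d\nu_q(x)=m_r$ for all $r$; by the definition (\ref{mu}) of a law, $\nu_q$ is therefore the law of $X_1$. It then only remains to recognize $\nu_q$ explicitly, and here I would simply invoke the known closed form of the orthogonality measure of the continuous $q$-Hermite polynomials (see e.g. \cite{q-gauss,q-bm}): through the Askey--Wilson $q$-beta integral, one checks that after the substitution $x=\frac{2\cos\te}{\sqrt{1-q}}$, $\te\in[0,\pi]$, this measure is absolutely continuous on $[\frac{-2}{\sqrt{1-q}},\frac{2}{\sqrt{1-q}}]$ with exactly the displayed density.

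The hard part is this last step: passing from the three-term recursion (equivalently, from the combinatorial moment sequence) to the product formula for the density. Nothing in the Wick-type identity (\ref{form-q-bm}) delivers it directly; it goes through the Askey--Wilson integral and the calculus of basic hypergeometric series, so I would quote it rather than reprove it. (Alternatively, one could verify directly that the displayed measure is a probability measure whose $r$-th moment equals $\sum_{\pi}q^{\mathrm{Cr}(\pi)}$, by a contour-integral/$q$-series computation; this is more laborious and brings nothing new.)
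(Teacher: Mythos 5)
The paper does not actually prove this proposition: it is introduced with ``Let us also quote a few basic properties of these processes'' and a citation to the $q$-Gaussian literature (Bo\.{z}ejko--K\"ummerer--Speicher), so there is no internal argument to compare against line by line. Your reconstruction is correct and is essentially the standard proof from that literature. The first part is fully rigorous as you present it: $X_t-X_s=X(1_{[s,t]})$, each pairing contributes $(t-s)^{r/2}$ in (\ref{form-q-bm-gene}), odd moments vanish, and since laws are by definition compactly supported probability measures determined by their moments, moment matching does give $X_t-X_s\sim\sqrt{t-s}\,X_1$. For the density, your route (crossing generating function $=$ weighted Dyck paths with down-step weight $[k]_q$ from height $k$, hence the three-term recursion of the continuous $q$-Hermite polynomials; boundedness of the Jacobi matrix for $q\in(-1,1)$ giving a compactly supported orthogonality measure whose moments are $\sum_\pi q^{\mathrm{Cr}(\pi)}$; then the Askey--Wilson closed form for that measure) is the classical Ismail--Stanton--Viennot argument, and your crossing count at a down-step (the closed arc crosses exactly the still-open arcs opened after it, giving weight $1+q+\cdots+q^{k-1}$) is the right bookkeeping. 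The only step you do not prove --- the explicit product formula for the $q$-Hermite orthogonality measure --- is exactly the step the paper also takes on faith by citation, so flagging it as quoted rather than reproved is consistent with the paper's own treatment; if one wanted a self-contained text, that $q$-series computation is the single missing ingredient.
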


\begin{remark}
Due to Proposition \ref{prop:base-q-bm}, it holds that for each fixed $q\in (-1,1)$, the $q$-Brownian motion $\{X_t\}_{t\geq 0}$ is a $\frac12$-Hölder path in $\ca$, i.e., for each $T>0$,
$$\sup_{s<t\in [0,T]} \frac{\|X_t-X_s\|}{\lln t-s \rrn^{1/2}} \ < \ \infty.$$ 
\end{remark}

\

The law $\nu_0$ corresponds to the celebrated semicircular distribution, and the $0$-Brownian motion is more commonly known as the \emph{free Brownian motion}. It is the central object in the very rich \emph{free probability theory}, which incorporates, in a non-commutative setting, the additional notion of \emph{free independence}:

\begin{definition}
Let $\mathcal{A}_1,\ldots,\mathcal{A}_p$ be unital subalgebras of $\mathcal{A}$.  Let $X_1,\ldots, X_m$ be elements
chosen among the $\mathcal{A}_i$'s such that, for $1\le j<m$,
two consecutive elements $X_j$ and $X_{j+1}$ do not come from the same $\mathcal{A}_i$, and
such that $\vp(X_j)=0$ for each $j$.  The subalgebras $\mathcal{A}_1,\ldots,\mathcal{A}_p$ are said to be {\em free} or {\em freely
independent} if, in this circumstance,
\begin{equation}\label{free-def}
\vp(X_1X_2\cdots X_m) = 0.
\end{equation}
Random variables are then called freely independent if the unital algebras
they generate are freely independent.
\end{definition}

With this definition in mind, and in a similar way to the classical Brownian motion case, the free Brownian $X$ can then be characterized by the following conditions: (\emph{i}) $X_0=0$ and for every $t> 0$, $X_t$ is a random variable; (\emph{ii}) the disjoint increments of $X$ are freely independent; (\emph{iii}) for all $s<t$, $\frac{1}{\sqrt{t-s}}(X_t-X_s)$ is distributed according to $\nu_0$.

\

By letting $q$ vary between $0$ and $1$, the $q$-Brownian motion therefore provides us with a nice interpolation between the respective central objects of the free ($q=0$) and the classical ($q\to 1$) probability theories.

\

\subsection{Tensor products}\label{subsec:tensor} The tensor product of $(\ca,\vp)$ with itself is at the very core of our study for two reasons. First, in Lyons' classical rough paths theory (which will guide us in Section \ref{sec:lyons}), it is the space accomodating the fundamental Lévy area. Then, in Section \ref{sec:gubi}, it will appear through the convenient notion of \emph{biprocesses}, following the considerations of \cite{biane-speicher}. 

\

We fix a non-commutative probability space $(\ca,\vp)$ and denote by $\ca \otimes \ca$ the algebraic tensor product of $\ca$. This space can be turned into a $C^\ast$-algebra in two (similar) ways, that will both intervene in the sequel:

\smallskip

\noindent
\emph{(i)} \textbf{Config. 1}: set
$$(U_1\otimes U_2) \cdot (V_1 \otimes V_2):=(U_1\cdot V_1) \otimes (U_2\cdot V_2) \quad , \quad (U\otimes V)^\ast:=U^\ast \otimes V^\ast.$$

\smallskip

\noindent 
\emph{(ii)} \textbf{Config. 2}: set
$$(U_1\otimes U_2) \cdot (V_1 \otimes V_2):=(U_1\cdot V_1) \otimes (V_2\cdot U_2) \quad , \quad (U\otimes V)^\ast:=V^\ast \otimes U^\ast.$$

\

\noindent
We will also make use of the following notations: for $U_1,U_2,U_3,X \in \ca$, define
$$(U_1 \otimes U_2) \sharp X=X\sharp (U_1 \otimes U_2):=U_1X  U_2,$$
and
$$X \sharp (U_1 \otimes U_2 \otimes U_3) :=(U_1X U_2) \otimes U_3 \quad , \quad (U_1 \otimes U_2 \otimes U_3) \sharp X:=U_1 \otimes (U_2XU_3).$$

\

Now, it is a well-known phenomenon that the completion of $\ca \otimes \ca$ can be done in many different and possibly non-equivalent ways (see e.g. \cite{murphy}). In what follows, we will resort to the following standard completion procedures:

\smallskip

\noindent
\emph{(i)} The \emph{projective tensor product} $\ca \hat{\otimes} \ca$, obtained as the completion of $\ca \otimes \ca$ with respect to the norm
$$\|\bu\|_{\ca \hat{\otimes} \ca}:=\inf \sum_i \| u_i\| \|u_i'\|$$
where the infimum is taken over all possible representation $\bu=\sum_i u_i \otimes u_i'$ of $\bu$. It is readily checked that under both Config.1 and Config.2, the product in $\ca \otimes \ca$ extends to $\ca \hat{\otimes} \ca$ and for $\mathbf{U},\mathbf{V} \in \ca \hat{\otimes} \ca $, one has $\| \mathbf{U} \cdot \mathbf{V} \|_{\ca \hat{\otimes} \ca} \leq \|\mathbf{U}\|_{\ca \hat{\otimes} \ca} \|\mathbf{V}\|_{\ca \hat{\otimes} \ca}$.

\smallskip

\noindent
\emph{(ii)} The \emph{spatial tensor product} $L^\infty(\vp \otimes \vp)$. Denote by $\mathcal{H}$ the underlying Hilbert space on which the operators of $\ca$ are supposed to act. Then, under both Config.1 and Config.2, the $C^\ast$-algebra $\ca \otimes \ca$ can be extended into a Von Neumann algebra of operators acting on the Hilbert completion of $\mathcal{H} \otimes \mathcal{H}$ and with tracial state extending $(\vp \otimes \vp)(U\otimes V):=\vp(U)\vp(V)$. The resulting space is usually referred to as the \emph{spatial tensor product} of $\ca$, and we will denote it by $L^\infty(\vp \otimes \vp)$. Note that by the very definition of this space, one has, under both Config.1 and Config.2, $\|\bu \cdot \bv\|_{L^\infty(\vp \otimes \vp)} \leq \|\bu \|_{L^\infty(\vp \otimes \vp)} \| \bv\|_{L^\infty(\vp \otimes \vp)}$ for all $\bu,\bv \in L^\infty(\vp \otimes \vp)$, and it also holds that $\|\bu \|_{L^\infty(\vp \otimes \vp)}=\lim_{p\to \infty} \|\bu\|_{L^p(\vp \otimes \vp)}$.

\

It is easily seen that $\ca \hat{\otimes} \ca$ is continuously included in $L^\infty(\vp \otimes \vp)$. As in \cite{biane-speicher}, we will call any process with values in $L^\infty(\vp \otimes \vp)$ a \emph{biprocess}. Let us finally label the following convenient algebraic relations.

\begin{lemma}\label{lem:basi}
Under \textbf{Config. 2}, the following relations hold true: for all $\mathbb{U} \in \ca^{\hat{\otimes}3}$, $\bu,\bv \in \ca^{\hat{\otimes}2}$ and $X,Y\in \ca$, $$[\mathbf{U} \cdot \mathbf{V}]^\ast=\mathbf{U}^\ast \cdot \mathbf{V}^\ast \quad , \quad \bu \sharp [\bv \sharp X]=[\bu \cdot \bv]\sharp X \quad , \quad [\mathbf{U} \sharp X]^\ast=\mathbf{U}^\ast \sharp X^\ast,$$
$$[X \sharp \mathbb{U} ] \sharp Y=X \sharp [\mathbb{U} \sharp Y]=:X \sharp \mathbb{U} \sharp Y.$$
\end{lemma}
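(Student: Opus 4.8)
The plan is to verify each of the four identities in Lemma~\ref{lem:basi} by a direct computation on elementary tensors, and then to extend by (bi)linearity and continuity to the completed spaces $\ca^{\hat\otimes 2}$ and $\ca^{\hat\otimes 3}$. This is legitimate because, under both configurations, the algebra operations and the $\sharp$-contractions are bounded multilinear maps for the projective norms (as recorded just before the lemma: $\|\bu\cdot\bv\|_{\ca\hat\otimes\ca}\leq \|\bu\|_{\ca\hat\otimes\ca}\|\bv\|_{\ca\hat\otimes\ca}$, and $\|(U_1\otimes U_2)\sharp X\|\leq \|U_1\|\,\|U_2\|\,\|X\|$ so that $\sharp$ extends to $\ca^{\hat\otimes 2}\times\ca\to\ca$), hence it suffices to check the relations on simple tensors where everything is an explicit product of elements of $\ca$.

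First I would treat $[\mathbf U\cdot\mathbf V]^\ast=\mathbf U^\ast\cdot\mathbf V^\ast$. Taking $\mathbf U=U_1\otimes U_2$ and $\mathbf V=V_1\otimes V_2$, Config.~2 gives $\mathbf U\cdot\mathbf V=(U_1V_1)\otimes(V_2U_2)$, whose adjoint (again in Config.~2) is $(V_2U_2)^\ast\otimes(U_1V_1)^\ast=(U_2^\ast V_2^\ast)\otimes(V_1^\ast U_1^\ast)$; on the other hand $\mathbf U^\ast\cdot\mathbf V^\ast=(U_2^\ast\otimes U_1^\ast)\cdot(V_2^\ast\otimes V_1^\ast)=(U_2^\ast V_2^\ast)\otimes(V_1^\ast U_1^\ast)$, and the two agree. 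Next, for $\bu\sharp[\bv\sharp X]=[\bu\cdot\bv]\sharp X$, write $\bu=U_1\otimes U_2$, $\bv=V_1\otimes V_2$: then $\bv\sharp X=V_1XV_2$, so $\bu\sharp[\bv\sharp X]=U_1V_1XV_2U_2$, while $[\bu\cdot\bv]\sharp X=((U_1V_1)\otimes(V_2U_2))\sharp X=U_1V_1X V_2U_2$ — note this is exactly where Config.~2's ``swapped'' product is needed so that the inner pair $V_2U_2$ lands on the right side. For $[\mathbf U\sharp X]^\ast=\mathbf U^\ast\sharp X^\ast$ with $\mathbf U=U_1\otimes U_2$: the left side is $(U_1XU_2)^\ast=U_2^\ast X^\ast U_1^\ast$, the right side is $(U_2^\ast\otimes U_1^\ast)\sharp X^\ast=U_2^\ast X^\ast U_1^\ast$.

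Finally the associativity-type relation $[X\sharp\mathbb U]\sharp Y=X\sharp[\mathbb U\sharp Y]$ for $\mathbb U\in\ca^{\hat\otimes 3}$: taking $\mathbb U=U_1\otimes U_2\otimes U_3$, the definitions give $X\sharp\mathbb U=(U_1XU_2)\otimes U_3$, hence $[X\sharp\mathbb U]\sharp Y=(U_1XU_2)\,Y\,U_3$ — wait, one must be careful: $\mathbb U\sharp Y$ was defined as $U_1\otimes(U_2YU_3)$, so $[X\sharp\mathbb U]$ is an element of $\ca^{\hat\otimes 2}$ of the form $W_1\otimes W_2$ with $W_1=U_1XU_2$, $W_2=U_3$, and $[W_1\otimes W_2]\sharp Y=W_1YW_2=U_1XU_2YU_3$; symmetrically $X\sharp[\mathbb U\sharp Y]=X\sharp(U_1\otimes(U_2YU_3))=U_1X(U_2YU_3)=U_1XU_2YU_3$, so both equal $U_1XU_2YU_3$ and the common value may be denoted $X\sharp\mathbb U\sharp Y$. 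The only genuine subtlety — the ``main obstacle'', though it is more bookkeeping than difficulty — is making sure the $\sharp$-contractions on $\ca^{\hat\otimes 3}$ are applied on the correct two legs (the first two for $X\sharp\mathbb U$, the last two for $\mathbb U\sharp Y$) so that the intermediate objects are honest elements of $\ca^{\hat\otimes 2}$ to which the $\ca^{\hat\otimes 2}$-level $\sharp$ applies, and to double-check that it is precisely Config.~2's order-reversing product that makes the second identity hold (under Config.~1 one would instead get $U_1V_1XU_2V_2$, which does not match). Having verified all four identities on simple tensors, I would conclude by invoking multilinearity and density of $\ca\otimes\ca$ in $\ca^{\hat\otimes 2}$ (resp.\ $\ca\otimes\ca\otimes\ca$ in $\ca^{\hat\otimes 3}$) together with the continuity of all the operations involved.
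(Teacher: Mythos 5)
Your verification is correct: all four identities check out on elementary tensors under Config.~2, and the extension by multilinearity, density and the projective-norm bounds on the product and the $\sharp$-contractions is exactly the routine argument needed. The paper in fact states Lemma~\ref{lem:basi} without proof, treating it as elementary algebra, and your computation is precisely the verification it leaves implicit, so there is nothing to add.
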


\subsection{Functional calculus}\label{subsec:funct} The rough-paths machinery is essentially based on the expansion of non-linear equations. In order to cope with (\ref{sys-base-intro}), it is therefore important to understand how the operator $f(X)$ (for $f:\R \to \C$ and $X\in \ca$) behaves.

\smallskip

Actually, following the ideas of \cite{biane-speicher} and for the sake of simplicity, we will restrict our attention to some particular classes of functions $f$: for every integer $k\geq 0$, consider
\begin{equation}
\mathbf{F}_k:=\{f:\R \to \C: \ f(x)=\int_{\R} e^{\imath x\xi} \mu_f(d\xi) \ \text{with} \ \int_{\R} |\xi|^i \, \mu_f(d\xi) < \infty \ \text{for all} \ i \in \{0,\ldots,k\}\},
\end{equation}
and set, if $f\in \mathbf{F}_k$, $\|f\|_k:=\sum_{i=0}^k \int_{\R} |\xi|^i \, \mu_f(d\xi)$.

\smallskip

Throughout the paper, we will denote by $\ca_\ast$ the $\R$-vector space of self-adjoint elements in $\ca$, and by $\mathcal{P}$ the space of complex polynomials. The operator $f(X)\in \ca$ is then trivially defined for every $f\in \mathcal{P}$ and $X\in \ca$, or for every $f\in \mathbf{F}_0$ and $X\in \ca_\ast$. Let us now study the differentiability properties of $f$ as a map from $\ca$ (or $\ca_\ast$) to $\ca$, by following the pattern of \cite{biane-speicher}.

\smallskip

\begin{definition}
Given a polynomial function $P(x):=\sum_{k=0}^d a_i \, x^i$, we define the \emph{tensor derivative}, resp. \emph{second tensor derivative}, of $P$ by the formula: for every $X\in \ca$,
$$\partial P(X):=\sum_{k=1}^d a_k\sum_{i=0}^{k-1} X^i \otimes X^{k-1-i} \ \in \ca \otimes \ca,$$
$$\text{resp.} \quad \partial^2  P(X):=\sum_{k=2}^d a_k \sum_{\substack{i,j \geq 0\\i+j \leq k-2}} X^i \otimes X^j \otimes X^{k-2-i-j} \ \in \ca \otimes \ca \otimes \ca.$$

\smallskip

\noindent
If $f\in \mathbf{F}_1$, resp. $f\in \mathbf{F}_2$, we extend the above formula as follows: for every $X\in \ca_\ast$,
$$\partial f(X):=\int_0^1 d\al \int_{\R} \imath \xi \, [e^{\imath \al \xi X} \otimes  e^{\imath (1-\al)\xi X}]\, \mu_f(d\xi) \quad \in \ca \hat{\otimes} \ca,$$
$$\text{resp.} \quad \partial^2 f(X):=-\iint_{\substack{\al,\be \geq 0\\ \al+\be \leq 1}} d\al \, d\beta \int_{\R}  \xi^2 \, [e^{\imath \al \xi X}\otimes  e^{\imath \beta\xi X} \otimes  e^{\imath (1-\al-\beta)\xi X}]\, \mu_f(d\xi) \quad \in \ca \hat{\otimes} \ca \hat{\otimes} \ca.$$
We also set $\partial^0 f=f$ for any function $f:\C \to \C$. 
\end{definition}

\begin{proposition}\label{prop:frechet}
If $f\in \mathbf{F}_k$, then $f$ is $k$-times Fréchet differentiable as a map from $\ca_\ast$ to $\ca$.
\end{proposition}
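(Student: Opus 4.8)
The plan is to prove the statement by induction on $k$, using the explicit integral representations of $\partial^j f$ as the candidate Fréchet derivatives and the key algebraic fact that $X \mapsto e^{\imath \xi X}$ behaves well under perturbation in $\ca_\ast$. The core estimate is the elementary bound $\| e^{\imath \xi (X+H)} - e^{\imath \xi X} \| \leq |\xi| \, \|H\|$, valid for $X,H \in \ca_\ast$ since $e^{\imath \xi X}$ is then unitary; more generally one has the first-order Duhamel expansion
$$e^{\imath \xi (X+H)} - e^{\imath \xi X} = \int_0^1 \imath \xi \, e^{\imath \al \xi (X+H)} \, H \, e^{\imath (1-\al)\xi X} \, d\al,$$
which, after integrating against $\mu_f(d\xi)$, is exactly what identifies $\partial f(X)\sharp H$ as the Gâteaux derivative of $f$ at $X$ in direction $H$.

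First I would treat the base case $k=0$: for $f \in \mathbf{F}_0$ and $X\in\ca_\ast$, the bound above gives $\|f(X+H)-f(X)\| \leq \|f\|_0 \,\|H\|$ — wait, more precisely $\|f(X+H)-f(X)\| \leq \big(\int |\xi|\mu_f(d\xi)\big)\|H\|$, so one actually gets continuity with a modulus; a cleaner route for mere continuity is to observe that $\|f(X+H)-f(X)\| \leq \int \|e^{\imath\xi(X+H)}-e^{\imath\xi X}\|\,\mu_f(d\xi)$ and apply dominated convergence using $\|e^{\imath\xi(X+H)}-e^{\imath\xi X}\| \leq \min(2,|\xi|\|H\|)$, which tends to $0$ pointwise in $\xi$ as $H\to 0$ and is dominated by the integrable function $\xi \mapsto 2$. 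Next, for the inductive step I would show that if $f\in\mathbf{F}_{k}$ then $f$ is differentiable with $Df(X)\cdot H = \partial f(X)\sharp H$, and that the map $X \mapsto \partial f(X)$ is itself $(k-1)$-times Fréchet differentiable as a map $\ca_\ast \to \ca\hat\otimes\ca$, whence the chain of derivatives closes. The differentiability of $f$ comes from the Duhamel identity: writing
$$f(X+H)-f(X) - \partial f(X)\sharp H = \int_0^1 d\al \int_\R \imath\xi \,\big[(e^{\imath\al\xi(X+H)} - e^{\imath\al\xi X})\,H\,e^{\imath(1-\al)\xi X}\big]\,\mu_f(d\xi),$$
the operator-norm of the bracket is bounded by $|\xi|\cdot|\al\xi|\,\|H\| \cdot \|H\| \leq |\xi|^2\|H\|^2$, so the remainder is $O(\|H\|^2)$ — in fact $\leq \tfrac12\|f\|_2\|H\|^2$ when $f\in\mathbf F_2$, and $o(\|H\|)$ in general by the same dominated-convergence argument applied to $\min(2|\xi|, |\xi|^2\|H\|)$. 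That $\partial f(X)$ is $(k-1)$-times Fréchet differentiable in $X$ follows by the same scheme applied to the tensor-valued integrand, using that $\|e^{\imath\al\xi(X+H)}\otimes e^{\imath(1-\al)\xi X}\|_{\ca\hat\otimes\ca} \leq 1$ and the Duhamel expansion in each slot; the relevant moment conditions match because $\|\partial f(\cdot)\|$ at differentiation order $j$ costs one extra factor of $|\xi|$, and $f\in\mathbf F_k$ precisely provides $|\xi|^{j+1}\mu_f$ integrable for $j\leq k-1$.

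The main obstacle I anticipate is purely bookkeeping rather than conceptual: one must be careful about which completed tensor product the successive derivatives live in (the statement only claims differentiability as a map into $\ca$, but the derivatives $\partial^j f$ naturally take values in $\ca^{\hat\otimes (j+1)}$, and these must be viewed as multilinear maps on $(\ca_\ast)^{j}$ via the $\sharp$ operation), and one must verify that the $\sharp$-contractions and the Duhamel integrals interchange properly with each other and with the $\mu_f$-integration — all justified by the projective-norm submultiplicativity and the finiteness of $\|f\|_k$. A second, minor point requiring attention is that differentiation is taken along $\ca_\ast$ only (so that $e^{\imath\xi X}$ stays unitary and the exponential is norm-controlled); the increments $H$ are therefore restricted to $\ca_\ast$, and one should note this is consistent with the statement since $\ca_\ast$ is the domain. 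Once these matters are organized, the estimates themselves are the one-line Duhamel bounds above, iterated $k$ times.
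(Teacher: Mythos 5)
Your proposal is correct and follows essentially the same route as the paper, whose entire proof is the remark that the statement follows from the Duhamel formula (\ref{duha-formu}) combined with the fact that $\|e^{\imath X}\|=1$ for $X\in\ca_\ast$ --- precisely the two ingredients you iterate. Your write-up merely supplies the bookkeeping (dominated convergence in $\xi$, projective-norm submultiplicativity, and the identification of $\partial^j f$ via $\sharp$) that the paper leaves to the reader.
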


\begin{proposition}\label{prop:diff-calc}
If $P\in \mathcal{P}$, then for all $X,Y \in \ca$, one has the formulas
\begin{equation}\label{formula-diff-pol}
dP(X)(Y)=\partial P(X) \sharp Y \quad , \quad d^2P(X)(Y_1,Y_2)= Y_1 \sharp \partial^2 P(X) \sharp Y_2+Y_2 \sharp \partial^2 P(X) \sharp Y_1,
\end{equation}
and similarly, if $f\in \mathbf{F}_1$ (resp. $f\in \mathbf{F}_2$), then for all $X,Y_1,Y_2 \in \ca_\ast$, 
\begin{equation}\label{formula-diff}
df(X)(Y_1)=\partial f(X) \sharp Y_1 \quad \text{(resp.} \quad d^2f(X)(Y_1,Y_2)= Y_1 \sharp \partial^2 f(X) \sharp Y_2+Y_2 \sharp \partial^2 f(X) \sharp Y_1 \text{)}.
\end{equation}
Moreover, if $f\in \mathbf{F}_{1+k}$ ($k\in \{0,1,2\}$), then for all $X,Y \in \ca_\ast$,
$$\|\partial^k f(X)-\partial^k f(Y)\|_{\ca^{\hat{\otimes}(k+1)}} \leq c \|f\|_{k+1} \|X-Y\|,$$
and if $f\in \mathbf{F}_{2+k}$ ($k\in \{0,1,2\}$), then for all $X_1,X_2,Y_1,Y_2 \in \ca_\ast$,
\begin{align*}
&\|\partial^k f(X_1)-\partial^k f(X_2)-\partial^k f(Y_1)+\partial^k f(Y_2)\|_{\ca^{\hat{\otimes}(k+1)}}\\
& \leq c \|f\|_{k+2} \big\{ \|X_1-X_2\| \|X_1-Y_1\|+ \|X_1-X_2\| \|X_2-Y_2\|+\|X_1-X_2-Y_1+Y_2\| \big\}.
\end{align*}
\end{proposition}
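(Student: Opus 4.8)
I will treat the four assertions separately. For a polynomial the formulas come out of a bare-hands expansion: writing $P(x)=x^n$ and expanding $(X+tY_1+sY_2)^n$ as a sum over words in the letters $X,tY_1,sY_2$, the coefficient of $t$ is $\sum_{i=0}^{n-1}X^i Y_1 X^{n-1-i}=\partial P(X)\sharp Y_1$, and the coefficient of $ts$ is, after reindexing, $\sum_{i,j\geq0,\,i+j\leq n-2}\bigl(X^iY_1X^jY_2X^{n-2-i-j}+X^iY_2X^jY_1X^{n-2-i-j}\bigr)$, which is precisely $Y_1\sharp\partial^2P(X)\sharp Y_2+Y_2\sharp\partial^2P(X)\sharp Y_1$; since a polynomial is everywhere smooth these directional derivatives are the Fr\'echet ones, and the general polynomial case follows by linearity. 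The common computational engine for the rest is Duhamel's formula for $e^{\imath\xi X}$.

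For $f\in\mathbf{F}_1$ I start from $f(X)=\int_{\R}e^{\imath\xi X}\,\mu_f(d\xi)$ (a norm-convergent $\ca$-valued Bochner integral, since $\|e^{\imath\xi X}\|=1$ and $\mu_f$ is finite) and from Duhamel's formula $\frac{d}{dt}\big|_{t=0}e^{\imath\xi(X+tY)}=\imath\xi\int_0^1e^{\imath\alpha\xi X}Y\,e^{\imath(1-\alpha)\xi X}\,d\alpha$, which I prove by expanding both sides in powers of $X$ and matching coefficients via $\int_0^1\alpha^j(1-\alpha)^k\,d\alpha=j!\,k!/(j+k+1)!$. The difference quotient $t^{-1}\bigl(f(X+tY)-f(X)\bigr)=\int_{\R}t^{-1}\bigl(e^{\imath\xi(X+tY)}-e^{\imath\xi X}\bigr)\mu_f(d\xi)$ then has integrand converging pointwise in $\xi$ to $\imath\xi\int_0^1e^{\imath\alpha\xi X}Ye^{\imath(1-\alpha)\xi X}d\alpha$ and dominated by the $\mu_f$-integrable function $|\xi|\,\|Y\|$; dominated convergence produces the directional derivative, which equals $\partial f(X)\sharp Y$ because $\bu\mapsto\bu\sharp Y$ is a bounded map $\ca\hat{\otimes}\ca\to\ca$ and hence commutes with the integral, and Proposition~\ref{prop:frechet} identifies it with $df(X)(Y)$. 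For $f\in\mathbf{F}_2$ I differentiate this expression once more in a second direction, applying Duhamel's formula to each of the two exponential factors and performing the change of variables $\alpha\,d\alpha\,d\beta=da\,db$ onto the simplex $\{a,b\geq0,\,a+b\leq1\}$; the new integrand is dominated by the $\mu_f$-integrable $\xi^2\|Y_1\|\|Y_2\|$, and recollecting the two symmetric contributions yields $Y_1\sharp\partial^2f(X)\sharp Y_2+Y_2\sharp\partial^2f(X)\sharp Y_1$, again identified with $d^2f(X)(Y_1,Y_2)$ through Proposition~\ref{prop:frechet}.

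The Lipschitz bounds rest on the elementary estimate $\|e^{\imath sX}-e^{\imath sY}\|\leq|s|\,\|X-Y\|$ for $X,Y\in\ca_\ast$ and $s\in\R$: write $e^{\imath sX}-e^{\imath sY}=\int_0^1\frac{d}{dt}e^{\imath s(tX+(1-t)Y)}\,dt$, apply Duhamel's formula, and use that $tX+(1-t)Y\in\ca_\ast$ while $\|e^{\imath sZ}\|=1$ for self-adjoint $Z$. The case $k=0$ is then immediate. For $k=1,2$ one telescopes the difference of the $(k+1)$-fold tensor products occurring in the definition of $\partial^kf$, using $\|U\otimes V\|_{\ca\hat{\otimes}\ca}\leq\|U\|\,\|V\|$ together with the identity $\sum_i|s_i|=|\xi|$ for the exponents $s_i$ (which are $\{\alpha\xi,(1-\alpha)\xi\}$ when $k=1$ and $\{\alpha\xi,\beta\xi,(1-\alpha-\beta)\xi\}$ when $k=2$), so that the tensor difference is bounded by $|\xi|\,\|X-Y\|$; integrating against the factor $|\xi|^k$ present in $\partial^kf$ and over the corresponding parameter domain gives $\|\partial^kf(X)-\partial^kf(Y)\|_{\ca^{\hat{\otimes}(k+1)}}\leq c\,\bigl(\int_{\R}|\xi|^{k+1}\mu_f(d\xi)\bigr)\|X-Y\|\leq c\,\|f\|_{k+1}\,\|X-Y\|$.

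For the second-order estimates the decisive lemma is a ``second difference'' bound for a single exponential: with $W:=X_1-X_2-Y_1+Y_2$, one has for $X_1,X_2,Y_1,Y_2\in\ca_\ast$ and $s\in\R$
$$\bigl\|e^{\imath sX_1}-e^{\imath sX_2}-e^{\imath sY_1}+e^{\imath sY_2}\bigr\|\leq\tfrac{s^2}{2}\bigl(\|X_1-Y_1\|+\|X_2-Y_2\|\bigr)\|X_1-X_2\|+|s|\,\|W\|,$$
obtained by subtracting the Duhamel representations of $e^{\imath sX_1}-e^{\imath sX_2}$ and of $e^{\imath sY_1}-e^{\imath sY_2}$ and substituting $Y_1-Y_2=(X_1-X_2)-W$ inside the integral, so that the $\|W\|$-contribution splits off cleanly while every remaining factor is either an exponential (of norm $1$) or a first difference controlled by the previous step. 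The step I expect to be the main obstacle is carrying this over to $\partial^kf$ for $k=1,2$: one must expand the second difference of the $(k+1)$-fold tensor product by a discrete Leibniz rule, and the telescoping has to be organised so that each summand contains \emph{either} exactly one genuine second difference of exponentials (times $k$ exponentials of norm $1$) \emph{or} exactly two first differences (times $k-1$ exponentials of norm $1$), and moreover so that the first differences that occur are always of the form $e^{\imath rX_1}-e^{\imath rX_2}$ or $e^{\imath rX_i}-e^{\imath rY_i}$ (for the relevant exponent $r$) and never of the form $e^{\imath rY_1}-e^{\imath rY_2}$; a careless telescoping produces spurious terms such as $\|W\|\,\|X_1-Y_1\|$ that cannot be absorbed into the right-hand side, since $\|X_1-Y_1\|$ is not a priori bounded. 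With the correct bookkeeping, and using $\|X_i-Y_i\|\leq\|X_1-Y_1\|+\|X_2-Y_2\|$ together with $\sum_i|s_i|=|\xi|$, the tensor second difference is bounded by $c\,\xi^2\bigl(\|X_1-Y_1\|+\|X_2-Y_2\|\bigr)\|X_1-X_2\|+c\,|\xi|\,\|W\|$; integrating this against $|\xi|^k\mu_f(d\xi)$ and invoking $\int_{\R}|\xi|^i\mu_f(d\xi)\leq\|f\|_{k+2}$ for $i\leq k+2$ (valid since $f\in\mathbf{F}_{2+k}$) yields precisely the asserted inequality.
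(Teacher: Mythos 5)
Your argument follows exactly the line the paper indicates: it proves nothing more than to point out that both Propositions~\ref{prop:frechet} and~\ref{prop:diff-calc} ``can be easily derived'' from Duhamel's formula~(\ref{duha-formu}) together with $\|e^{\imath X}\|=1$ for $X\in\ca_\ast$, and your proposal is essentially a careful execution of that one-line hint. The only step you announce but do not write out is the ``correct bookkeeping'' in the last estimate, so let me record that the decomposition you demand does exist. Writing $F(X)=G(X)\otimes H(X)$ for two of the exponential factors (with the triple product handled by one recursive application), one has
\begin{align*}
F(X_1)-F(X_2)-F(Y_1)+F(Y_2)
&=\bigl(G(X_1)-G(X_2)\bigr)\otimes\bigl(H(X_1)-H(Y_1)\bigr)\\
&\quad+\bigl[G(X_1)-G(X_2)-G(Y_1)+G(Y_2)\bigr]\otimes H(Y_1)\\
&\quad+\bigl(G(X_2)-G(Y_2)\bigr)\otimes\bigl(H(X_1)-H(X_2)\bigr)\\
&\quad+G(Y_2)\otimes\bigl[H(X_1)-H(X_2)-H(Y_1)+H(Y_2)\bigr],
\end{align*}
and each term is of one of the two admissible shapes: a product of two first differences, each drawn from $\{X_1-X_2,\,X_1-Y_1,\,X_2-Y_2\}$ with $X_1-X_2$ always among them, or a single scalar second difference times a unitary factor. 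Your scalar second-difference lemma applied to the two second-difference blocks, the bound $\|e^{\imath rX}-e^{\imath rY}\|\le|r|\,\|X-Y\|$ applied to the first-difference blocks, and the identity $\sum_i|s_i|=|\xi|$ then give a bound of the form $c\,\xi^2\bigl(\|X_1-Y_1\|+\|X_2-Y_2\|\bigr)\|X_1-X_2\|+c\,|\xi|\,\|W\|$ on the tensor second difference, with no spurious $\|W\|\,\|X_i-Y_j\|$ term; integrating against $|\xi|^k\mu_f(d\xi)$ yields the asserted inequality. With this addendum the proposal is complete and correct.
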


\

Both Propositions \ref{prop:frechet} and \ref{prop:diff-calc} can be easily derived from the general formula
\begin{equation}\label{duha-formu}
e^X-e^Y=\int_0^1  e^{\al X}(X-Y)e^{(1-\al)Y}\, d\al \quad \text{for all}\ X,Y\in \ca,
\end{equation}
together with the fact that if $X\in \ca_\ast$, then $\|e^{iX}\|=1$.

\section{The classical rough-paths approach}\label{sec:lyons}

We propose here to generalize the main results of \cite{capitaine-donati,vic-free}, based on Lyons' classical rough paths theory, but also to emphasize the limits of this approach as an introduction to the considerations of Section \ref{sec:gubi}.

\smallskip

Throughout the section, we assume that the multiplication and the adjoint operation in the tensor products are governed by \textbf{Config.1} (see Section \ref{subsec:tensor}).

\subsection{Elements from Lyons' rough paths theory}\label{subsec:elements-lyons}

Consider for the moment a general Banach space ($V,\| \cdot \|$) as well as a norm $\eta$ on its algebraic tensor product $V \otimes V$, which therefore extends by completion to a space $V\otimes_\eta V$. Fix $T>0$, $\ga >\frac13$ and let $X:[0,T] \to V$ be a $\ga$-Hölder map in $V$, i.e., for all $0\leq s < t\leq T$, $\|X_t-X_s \| \leq c \lln t-s \rrn^\ga$ for some constant $c$. Set $\Delta_{0,T}:=\{(s,t): \ 0\leq s <t \leq T\}$.

\smallskip

In order to properly define and study integrals with respect to $X$, Lyons' rough-paths machinery leans on the existence of a so-called \emph{Lévy area} above $X$. It morally corresponds to the iterated integral $\mathbf{X}_{s,t}=\int_s^t \int_s^u dX_v \otimes dX_u$ ($0\leq s<t\leq T$), although this definition is a priori not clear for a non-differentiable $X$.

\begin{definition}\label{defi-levy-area}
A Lévy area (in the sense of Lyons) above $X$ and with respect to $\eta$ is a map $\mathbf{X}: \Delta_{0,T} \to  V \otimes_\eta V$ such that for all $0\leq s<u<t\leq T$, one has both:

\smallskip

\noindent
\textit{(i)} Chen's identity:  $\mathbf{X}_{s,t}=\mathbf{X}_{s,u}+\mathbf{X}_{u,t}+(X_u-X_s) \otimes (X_t-X_u)$.

\smallskip

\noindent
\textbf{(ii)} $2\ga$-Hölder regularity: $\eta\big( \mathbf{X}_{s,t} \big) \leq c \lln t-s \rrn^{2\ga}$ for some constant $c$.
\end{definition}

Once endowed with a Lévy area above $X$, the theory allows us to integrate any \emph{$\text{Lip}(\ga)$ one-form} (with respect to $\eta$) against $X$. In other words, we can derive a natural definition for
\begin{equation}\label{integral}
\int_0^T \al(X_u) \, dX_u
\end{equation}
provided $\al:V \to \cl(V,V)$ is such that, for some map $d\al: V \to \cl(V \otimes_\eta V,V)$, the following conditions are satisfied: for all $X,Y,Z\in V$ and $\mathbf{Z}\in V \otimes_\eta V$,
$$\|\al(X)\| \leq c (1+\| X\|),$$
$$\|\al(X)Z-\al(Y)Z-d\al(Y)((X-Y) \otimes Z)\| \leq c \|Z\| \|X-Y\|^{\frac{1}{\ga}-1},$$
$$\| d\al(X)\mathbf{Z}-d\al(Y) \mathbf{Z}\| \leq c \, \eta \big( \mathbf{Z}\big) \|X-Y\|^{\frac{1}{\ga}-2}.$$

In brief, let us say that under these conditions, the integral (\ref{integral}) can be defined as the limit of \emph{corrected Riemann sums}, i.e.,
$$\int_0^T \al(X_u) \, dX_u =\lim \sum_{(t_i)} \big\{\al(X_{t_i}) \, (X_{t_{i+1}}-X_{t_i})+d\al (X_{t_i})(\mathbf{X}_{t_i,t_{i+1}}) \big\},$$
where the limit is taken over any partition $(t_i)_i$ of $[0,T]$ with mesh tending to $0$. In fact, this definition of the integral can be extended in such a way that one is able to interpret and solve the differential equation
\begin{equation}\label{equa-lyons}
Y_0 =A\in \ca \quad ,  \quad dY_t=\al(Y_t) \, dX_t.
\end{equation}

A remarkable feature of Lyons' theory lies in the continuity (or approximation) results associated with these constructions: the rough-paths solution $Y$ of (\ref{equa-lyons}) is thus known to be a continuous functional of the pair $(X,\mathbf{X})$ with respect to Hölder topology. Moreover, the general procedure actually remains valid for rougher paths $X$ (i.e., $\ga \in (0,1/3)$) provided one can let iterated integrals of higher orders come into the picture.

\smallskip

Of course, it is very important to notice that all of the above considerations heavily rely on the topology used on the tensor product of $V$. Indeed, not only is $\mathbf{X}$ defined in $V\otimes_\eta V$, but the latter space also strongly intervenes in the definition of a $\text{Lip}(\ga)$ one-form.

\subsection{Lévy area in the spatial tensor product}\label{subsec:levy-lyons} In \cite{capitaine-donati,vic-free}, the authors succeed in exhibiting a natural Lévy area above the free Brownian motion in the spatial tensor product $L^\infty(\vp \otimes \vp)$. It turns out that their arguments do not really appeal to the freeness properties of the process, and we therefore propose to extend their result to the $q$-Brownian for any $q\in (-1,1)$.

\smallskip

So, from now on, we fix $q\in (-1,1)$ and we let $\{X_t\}_{t\geq 0}$ stand for a $q$-Brownian motion on some non-commutative probability space $(\ca,\vp)$. Then set, for every simple process $V=\sum_{i=0}^{k-1} V_{t_i} 1_{[t_i,t_{i+1})}$ in $\ca$,
$$\int_0^\infty V_s \otimes dX_s:=\sum_{i=0}^{k-1} V_{t_i} \otimes (X_{t_{i+1}}-X_{t_i}).$$
As in \cite{vic-free}, our construction of the Lévy area relies on the following Burkholder-Gundy inequality.

\begin{proposition}\label{prop:topo-1}
For every simple process $V$, one has
\begin{equation}\label{ineq:bg}
\bigg\| \int_0^\infty V_s \otimes dX_s \bigg\|_{L^\infty(\vp \otimes \vp)} \leq \frac{2}{\sqrt{1-|q|}} \sqrt{\int_0^\infty \|V_s\|^2 \, ds}.
\end{equation}
\end{proposition}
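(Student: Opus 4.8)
The plan is to imitate the classical (and free) Burkholder–Gundy argument by computing the $L^{2p}$-norms of the biprocess $\int_0^\infty V_s\otimes dX_s$ explicitly using the combinatorial moment formula \eqref{form-q-bm-gene}, and then to let $p\to\infty$ via the identity $\|\bu\|_{L^\infty(\vp\otimes\vp)}=\lim_{p\to\infty}\|\bu\|_{L^{2p}(\vp\otimes\vp)}$ recorded in Section~\ref{subsec:tensor}. Write $I:=\int_0^\infty V_s\otimes dX_s=\sum_{i}V_{t_i}\otimes(X_{t_{i+1}}-X_{t_i})$ and observe that $I$ is self-adjoint-friendly enough that $\|I\|_{L^{2p}(\vp\otimes\vp)}^{2p}=(\vp\otimes\vp)\big((I^\ast I)^p\big)$; expanding this trace produces a sum over $2p$ copies of the increments, in which the $\ca$-part (the $V$'s) and the noise-part (the increments of $X$) decouple as a product of a trace on $\ca$ against a scalar coming from $\vp$ applied to a product of $2p$ increments of $X$. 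The key input is that, by \eqref{form-q-bm-gene}, that scalar factor is $\sum_{\pi\in\mathcal P_2(\{1,\dots,2p\})}q^{\mathrm{Cr}(\pi)}\prod_{\{i,j\}\in\pi}\langle \mathbf 1_{[t_{i},t_{i+1}]},\mathbf 1_{[t_{j},t_{j+1}]}\rangle$, and the inner products vanish unless the two blocks refer to the same subinterval (the increments are orthogonal).

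The next step is to bound the resulting alternating sum. Taking absolute values, $|q^{\mathrm{Cr}(\pi)}|\le |q|^{\mathrm{Cr}(\pi)}$, so the whole quantity is dominated by the corresponding expression with $q$ replaced by $|q|$; this is exactly the device that lets the free-case argument of \cite{vic-free} go through for every $q\in(-1,1)$ and explains the constant $2/\sqrt{1-|q|}$. One then recognizes the majorizing sum as the $2p$-th moment of a $|q|$-semicircular-type computation: the diagonal pairings (those matching block $i$ with itself, forcing the orthogonality constraints to be automatically satisfied by repetition of the same interval) contribute $\prod \|V_{t_i}\|^2\cdot\|X_{t_{i+1}}-X_{t_i}\|_{L^2(\vp)}^2$-type terms summing to $\big(\sum_i\|V_{t_i}\|^2(t_{i+1}-t_i)\big)^p=\big(\int_0^\infty\|V_s\|^2\,ds\big)^p$ up to a combinatorial factor, and one checks that the number of admissible pairings contributing at level $2p$ is at most $C_p\,\big(\tfrac{1}{1-|q|}\big)^{p}$ for the Catalan-type count $C_p$ governing the $q$-Gaussian moments (cf. \cite{q-bm,q-gauss}). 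Operator-norm-wise, $\|(U_1\otimes U_2)\|_{L^{2p}(\vp\otimes\vp)}$ is controlled by $\|U_1\|\,\|U_2\|$-type estimates after pulling the deterministic coefficients $V_{t_i}$ out in operator norm, which is where the $\|V_s\|^2$ (rather than $\|V_s\|_{L^2(\vp)}^2$) enters.

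Collecting these estimates gives $\|I\|_{L^{2p}(\vp\otimes\vp)}\le \big(\tfrac{2}{\sqrt{1-|q|}}+o(1)\big)\sqrt{\int_0^\infty\|V_s\|^2\,ds}$ as $p\to\infty$, uniformly in the partition; passing to the limit in $p$ and using $\|I\|_{L^\infty(\vp\otimes\vp)}=\lim_p\|I\|_{L^{2p}(\vp\otimes\vp)}$ yields \eqref{ineq:bg}. The main obstacle I anticipate is the bookkeeping in the combinatorial bound: one must carefully argue that, after inserting the orthogonality constraints forced by the $L^2(\R_+)$ inner products, the only pairings that survive are those compatible with the block structure, and that summing $\prod_{\{i,j\}\in\pi}(t_{i}\wedge\ldots)$-style factors over such pairings reproduces exactly the $p$-th power of $\int\|V_s\|^2\,ds$ with the announced constant — rather than something larger. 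This is precisely the step where the free computation of \cite{capitaine-donati,vic-free} must be re-examined to confirm that replacing $q$ by $|q|$ (and nothing else) suffices, and it is essentially the heart of the proof.
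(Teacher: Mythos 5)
Your strategy coincides with the paper's: expand $(\vp\otimes\vp)\big(|{\textstyle\int} V\otimes dX|^{2p}\big)$, separate the $\ca$-factor (bounded by $\prod_j\|V_{t_{i_j}}\|$) from the noise factor, evaluate the latter with the moment formula (\ref{form-q-bm-gene}), dominate $q^{\mathrm{Cr}(\pi)}$ by $|q|^{\mathrm{Cr}(\pi)}$, and let $p\to\infty$ using $\|\cdot\|_{L^\infty(\vp\otimes\vp)}=\lim_p\|\cdot\|_{L^{2p}(\vp\otimes\vp)}$. The problem is that the step you yourself identify as the ``main obstacle'' is the actual heart of the proof, and your sketch of it does not close: the phrases ``diagonal pairings \ldots\ contribute \ldots\ up to a combinatorial factor'' and ``the number of admissible pairings is at most $C_p(1-|q|)^{-p}$'' conflate two different objects (the \emph{number} of pairings compatible with a given index tuple, versus the $|q|$-weighted sum over \emph{all} pairings), and multiplying a bound on one by the full sum over the other risks over- or double-counting; as written there is no argument that the final constant is exactly $2/\sqrt{1-|q|}$ rather than something larger.

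The missing ingredient is an exact counting lemma that makes the double sum factorize. Group the index tuples $(i_1,\ldots,i_{2p})$ by their multiplicities: only tuples in which each value $l$ occurs an even number of times $2p_l$ (with $p_0+\cdots+p_{k-1}=p$) contribute. Then, for each \emph{fixed} pairing $\pi\in\cp_2(\{1,\ldots,2p\})$, the number of tuples with these prescribed multiplicities whose $\pi$-connected indices are all equal is exactly the multinomial coefficient $p!/(p_0!\cdots p_{k-1}!)$ --- crucially, independent of $\pi$. Hence the sum over tuples and pairings splits as $\big(\sum_{\pi}|q|^{\mathrm{Cr}(\pi)}\big)\cdot\sum_{p_0+\cdots+p_{k-1}=p}\frac{p!}{p_0!\cdots p_{k-1}!}\prod_i\big(\|V_{t_i}\|^2\,\lln t_{i+1}-t_i\rrn\big)^{p_i}$. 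The first factor is the $2p$-th moment of $\nu_{|q|}$, which by the support statement in Proposition \ref{prop:base-q-bm} is at most $\big(2/\sqrt{1-|q|}\big)^{2p}$ (no Catalan-type estimate is needed), and the second factor equals $\big(\sum_i\|V_{t_i}\|^2\lln t_{i+1}-t_i\rrn\big)^p=\big(\int_0^\infty\|V_s\|^2\,ds\big)^p$ by the multinomial theorem. Inserting this lemma, your argument closes and yields (\ref{ineq:bg}) with the announced constant; without it, the proposal is an accurate plan but not yet a proof.
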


\begin{proof}
It is a mere generalization of the proof of \cite[Theorem 2]{vic-free}. Let $V=\sum_{i=0}^{k-1} V_{t_i} 1_{[t_i,t_{i+1})}$. By setting  $e_i:=\frac{1}{\sqrt{t_{i+1}-t_i}}1_{[t_i,t_{i+1}]}$ and $Y_{i}:=X(e_i)$, one has for every integer $p\geq 1$,
\begin{eqnarray}
\lefteqn{\vp \otimes \vp \bigg( \bigg| \int_0^\infty V_s \otimes dX_s \bigg|^{2p} \bigg)}\nonumber\\
&=& \sum_{i_1,\ldots,i_{2p}=0}^{k-1} \vp\big(V_{t_{i_1}}V_{t_{i_2}}^\ast \cdots V_{t_{i_{2p-1}}}V_{t_{i_{2p}}}^\ast\big)\, \vp \big( X(1_{[t_{i_{1}}, t_{i_{1}+1}]}) \cdots X(1_{[t_{i_{2p-1}}, t_{i_{2p}}]})\big)\nonumber\\
&\leq &  \sum_{i_1,\ldots,i_{2p}=0}^{k-1} \Big( \prod_{j=1}^{2p}  \|V_{t_{i_j}}\| \lln t_{i_j+1}-t_{i_j}\rrn^{1/2} \Big)\, |\vp \big( Y_{i_1} \cdots Y_{i_{2p}}\big)| \nonumber\\
& = &\sum_{p_0+\cdots+p_{k-1}=p} \prod_{i=0}^{k-1} \big( \|V_{t_i}\|_{\ca}^2 \lln t_{i+1}-t_i\rrn \big)^{p_i}\sum_{\substack{(i_1,\ldots,i_{2p}) \in \{0,\dots,k-1\}^{2p}\\ \text{card} \{j:\, i_j=l\}=2p_l}} |\vp \big( Y_{i_1} \cdots Y_{i_{2p}}\big)|,\label{ineq-1}
\end{eqnarray}
where we have used the fact that $\vp \big( Y_{i_1} \cdots Y_{i_{2p}}\big)=0$ as soon as there exists $l$ for which $\text{card} \{j:\, i_j=l\}$ is odd. Now, for all pairing $\pi \in \cp_2(\{1,\ldots,2p\})$ and $(i_1,\ldots,i_{2p})\in \{0,\ldots,k-1\}^{2p}$, set $\delta(\pi,(i_1,\ldots,i_{2p}))=1$ if the indices of $(i_1,\ldots,i_{2p})$ connected by $\pi$ are all equal, and $\delta(\pi,(i_1,\ldots,i_{2p}))=0$ otherwise. With this notation in hand, one has by (\ref{form-q-bm-gene})
$$ \vp \big( Y_{i_1} \cdots Y_{i_{2p}}\big)=\vp \big( X(e_{i_1}) \cdots X(e_{i_{2p}}) \big) =\sum_{\pi \in \cp_2(\{1,\ldots,2p\})} q^{\text{Cr}(\pi)} \cdot \delta(\pi,(i_1,\ldots,i_{2p})),$$
and accordingly
\begin{multline*}
\sum_{\substack{(i_1,\ldots,i_{2p}) \in \{0,\dots,k-1\}^{2p}\\ \text{card} \{j:\, i_j=l\}=2p_l}} |\vp \big( Y_{i_1} \cdots Y_{i_{2p}}\big)|
\leq \sum_{\pi \in \cp_2(\{1,\ldots,2p\})} |q|^{\text{Cr}(\pi)} \cdot\\ \text{card} \big\{ (i_1,\ldots,i_{2p}) \in \{0,\dots,k-1\}^{2p}: \ \text{card} \{j:\, i_j=l\}=2p_l \ , \ \delta(\pi,(i_1,\ldots,i_{2p}))=1\big\}.
\end{multline*}
Then, using basic combinatorial arguments, it is easily checked that for each fixed $\pi \in \cp_2(\{1,\ldots,2p\})$, one has
$$\text{card} \big\{ (i_1,\ldots,i_{2p}) \in \{0,\dots,k-1\}^{2p}: \ \text{card} \{j:\, i_j=l\}=2p_l \ , \ \delta(\pi,(i_1,\ldots,i_{2p}))=1\big\}=\frac{p!}{p_0! \cdots p_{k-1}!}.$$
Since
$$\sum_{\pi \in \cp_2(\{1,\ldots,2p\})} |q|^{\text{Cr}(\pi)}=\int_{\R} x^{2p} \, \nu_{|q|}(dx)=\int_{\frac{-2}{\sqrt{1-|q|}}}^{\frac{2}{\sqrt{1-|q|}}} x^{2p} \, \nu_{|q|}(dx)\leq \bigg( \frac{2}{\sqrt{1-|q|}}\bigg)^{2p},$$
we can go back to (\ref{ineq-1}) and assert that
\begin{eqnarray*}
\vp \otimes \vp \bigg( \bigg| \int_0^1 V_s \otimes dX_s \bigg|^{2p} \bigg) &\leq & \bigg( \frac{2}{\sqrt{1-|q|}}\bigg)^{2p} \sum_{p_0+\cdots+p_{k-1}=p}\frac{p!}{p_0! \cdots p_{k-1}!} \big( \|V_{t_i}\|_{\ca}^2 \lln t_{i+1}-t_i\rrn \big)^{p_i}\\
&=& \bigg( \frac{2}{\sqrt{1-|q|}}\bigg)^{2p} \bigg( \sqrt{ \sum\nolimits_{i=0}^{k-1} \|V_{t_i}\|_{\ca}^2 \lln t_{i+1}-t_i\rrn}\bigg)^{2p}\\
&=& \bigg( \frac{2}{\sqrt{1-|q|}}\bigg)^{2p}\bigg(\sqrt{\int_0^\infty \|V_s\|_{\ca}^2 \, ds}\bigg)^{2p}.
\end{eqnarray*}
which, thanks to (\ref{prop:norm-op}), gives us the conclusion by letting $p$ tend to infinity .
\end{proof}

With the help of (\ref{ineq:bg}), we can define a Lévy area $\bx$ by the natural formula
$$\bx_{s,t}:= \int_s^t (X_u-X_s) \otimes dX_u  \in L^\infty(\vp \otimes \vp).$$
Indeed, it is easy to check that for all $0\leq s<t$, the path $X_. \, 1_{[s,t]}$ belongs to the completion of the space of simple processes with respect to the norm $\|V\|:=\sqrt{\int_0^\infty \|V_s\|^2 \, ds}$ (consider e.g. a piecewise approximation $X^n =\sum_{i=0}^{n-1} X_{t_i} \, 1_{[t_i,t_{i+1})}$ of $X_. \, 1_{[s,t]}$ and use the $\frac12$-Hölder regularity of $X$ to estimate $\|X^n-X_. \, 1_{[s,t]}\|$ as $n$ tends to infinity). 

\smallskip

It is worth mentioning that due to (\ref{ineq:bg}), the resulting pair $(X,\bx)$ is a \emph{geometric rough paths}, in the following sense. For a fixed $T>0$ and for every $n\geq 1$, consider the linear interpolation $X^n$ of $X$ along the partition $t^n_i=\frac{iT}{2^n}$, i.e.,
\begin{equation}
X^n_t:=X_{t^n_i}+\frac{2^{n}}{T}(t-t^n_i) \{X_{t^n_{i+1}}-X_{t^n_i}\}   \quad \text{if} \ t\in [t_i,t_{i+1}],
\end{equation}
and define the approximation $\bx^n$ as the (classical) Lebesgue integral
$$\bx^n_{s,t}:=\int_s^t (X^n_u-X^n_s) \otimes dX^n_u \quad , \quad 0\leq s<t\leq T.$$
Then, as a path with values in $\ca \times L^\infty(\vp \otimes \vp)$, the pair $(X^n,\bx^n)$ converges to $(X,\bx)$ with respect to suitable Hölder topology (see \cite[Section 3]{vic-free} for further details). In particular, as a consequence of Lyons' continuity results, we can assert that for any $\ga >\frac13$ and any $\text{Lip}(\ga)$ one-form $\al:\ca \to \cl(\ca,\ca)$ (with respect to $\|\cdot\|_{L^\infty(\vp \otimes \vp)}$), the classical solution of $dY^n_t =\al(Y^n_t) \, dX^n_t$
converges to the rough-paths solution of $dY_t =\al(Y_t) \, dX_t$ as $n$ tends to infinity.

\subsection{Example}
As an application, let us show here how the Lévy area $\mathbf{X}$ (with values in $L^\infty(\vp \otimes \vp)$) constructed in Section \ref{subsec:levy-lyons} allows us to deal with the equation 
$$dY_t=\vp(f(Y_t)) \, dX_t$$
for any $f\in \mathbf{F}_2$ such that $\mu_f$ is a symmetric measure (we recall that these notations have been introduced in Section \ref{subsec:funct}). 

\smallskip

First, considering the properties of Section \ref{subsec:funct}, it appears convenient to focus on processes with values in the space $\ca_\ast$ of self-adjoint operators. Accordingly, we henceforth denote by $L^\infty(\vp \otimes \vp)_\ast$ the completion of $\ca_\ast \otimes \ca_\ast$ with respect to $\|\cdot \|_{L^\infty(\vp \otimes \vp)}$. It is clear by construction that $\bx$ takes values in this space. Therefore, the problem reduces to showing that the map $\al_f: \ca_\ast \to \cl(\ca_\ast,\ca_\ast)$ defined by
$$ \al_f(X)(Y):=\vp(f(Y))X, \quad X,Y\in \ca_\ast,$$
can be extended to a Lip($\ga$) one-form with respect to $(\ca_\ast,L^\infty(\vp \otimes \vp)_\ast)$, for $\ga\in (\frac13,\frac12)$. For technical reasons, we will also have to consider the space 
$$(\ca \otimes \ca)_\ast:=\{\bu=\sum_i U_i \otimes V_i \in \ca \otimes \ca: \ \sum_i V_i^\ast \otimes U_i^\ast=\sum_i U_i \otimes V_i \}$$
and its completion $\ca \hat{\otimes}_\ast \ca$ with respect to the projective tensor norm. Observe in particular that if $X\in \ca_\ast$, then $\partial f(X)\in \ca \hat{\otimes}_\ast \ca$ (the notation $\partial f$ has been introduced in Section \ref{subsec:funct} too).

\smallskip

Now, in order to define a suitable derivative $d\al_f$, two preliminary extension results are required.

\begin{lemma}\label{lem:map-phi}
The map 
$$\vp \times \id : \ca_\ast \otimes \ca_\ast \to \ca_\ast \ , \ X \otimes Y \mapsto \vp(X)Y$$
 can be continuously extended to $L^\infty(\vp \otimes \vp)_\ast$ and for every $\mathbf{U}\in L^\infty(\vp \otimes \vp)_\ast$, 
 $$\| (\vp \times \id )(\mathbf{U})\| \leq \| \mathbf{U}\|_{L^\infty(\vp \otimes \vp)}.$$
\end{lemma}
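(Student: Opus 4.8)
The plan is to show that the bilinear map $\vp\times\id$ is bounded on $\ca_\ast\otimes\ca_\ast$ equipped with the $L^\infty(\vp\otimes\vp)$-norm, and then invoke density of $\ca_\ast\otimes\ca_\ast$ in $L^\infty(\vp\otimes\vp)_\ast$ to extend it by continuity. The key is to produce, for a general $\bu=\sum_i X_i\otimes Y_i\in\ca_\ast\otimes\ca_\ast$, the pointwise estimate $\|(\vp\times\id)(\bu)\|\le\|\bu\|_{L^\infty(\vp\otimes\vp)}$ with the operator norm on the left. First I would pass to the $L^p$-norms and use the characterization $\|Z\|=\lim_{p\to\infty}\|Z\|_{L^p(\vp)}$ from (\ref{prop:norm-op}) on the target side and $\|\bu\|_{L^\infty(\vp\otimes\vp)}=\lim_{p\to\infty}\|\bu\|_{L^p(\vp\otimes\vp)}$ on the source side; so it suffices to establish $\|(\vp\times\id)(\bu)\|_{L^p(\vp)}\le\|\bu\|_{L^p(\vp\otimes\vp)}$ for every $p\geq 1$, or indeed for every even integer $2p$, which is enough to conclude by taking $p\to\infty$.

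For the $L^{2p}$ estimate I would compute $\vp\big(|(\vp\times\id)(\bu)|^{2p}\big)$ directly. Writing $W:=(\vp\times\id)(\bu)=\sum_i\vp(X_i)Y_i$, which is self-adjoint since $\bu\in(\ca\otimes\ca)_\ast$ forces $\sum_i\vp(X_i)Y_i=\sum_i\vp(Y_i^\ast)X_i^\ast$... — more to the point, the scalars $\vp(X_i)$ are real because the $X_i$ are self-adjoint, so $W^\ast=W$. Then $\vp(W^{2p})=\sum_{i_1,\dots,i_{2p}}\vp(X_{i_1})\cdots\vp(X_{i_{2p}})\,\vp(Y_{i_1}\cdots Y_{i_{2p}})$. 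The idea is to recognize the right-hand side as $(\vp\otimes\vp)$ applied to $\bu^{2p}$ computed in the appropriate configuration: under \textbf{Config. 1}, $\bu^{2p}=\sum X_{i_1}\cdots X_{i_{2p}}\otimes Y_{i_1}\cdots Y_{i_{2p}}$, whence $(\vp\otimes\vp)(\bu^{2p})=\sum\vp(X_{i_1}\cdots X_{i_{2p}})\,\vp(Y_{i_1}\cdots Y_{i_{2p}})$, which is \emph{not} quite what appears. So instead I would view $\vp(X_{i_1})\cdots\vp(X_{i_{2p}})$ as $(\vp^{\otimes 2p})(X_{i_1}\otimes\cdots\otimes X_{i_{2p}})$ and apply a Cauchy–Schwarz / positivity argument: since $W$ is self-adjoint, $\vp(W^{2p})=\|W^p\|_{L^2(\vp)}^2$, and one bounds $\|W^p\|_{L^2(\vp)}$ by pairing against the spatial tensor structure. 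Concretely, $W=(\id\otimes\vp)(\bv)$ where $\bv=\sum_i Y_i\otimes X_i$ is the "flip" of $\bu$ (still in $L^\infty(\vp\otimes\vp)_\ast$ with the same norm), so the statement is the standard fact that a partial trace is a norm-one (completely positive, unital-preserving-scaled) map; this is where I would cite or reprove that $\|(\id\otimes\vp)(\bv)\|\le\|\bv\|_{L^\infty(\vp\otimes\vp)}$ for $\bv$ in the spatial tensor product — a conditional-expectation-type contraction.

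The cleanest route, and the one I would actually write, is: the map $\id\otimes\vp:L^\infty(\vp\otimes\vp)\to\ca$ is (the restriction of) the trace-preserving conditional expectation onto the first factor $\ca\otimes 1\cong\ca$, hence is a unital positive (indeed completely positive) normal map, and every such map is a contraction for the operator norm. Applying this to $\bv$, the flip of $\bu$, yields exactly $\|(\vp\times\id)(\bu)\|\le\|\bu\|_{L^\infty(\vp\otimes\vp)}$; self-adjointness of the output is automatic from $\bu\in L^\infty(\vp\otimes\vp)_\ast$. Then density of $\ca_\ast\otimes\ca_\ast$ in $L^\infty(\vp\otimes\vp)_\ast$ gives the continuous extension with the same bound. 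The main obstacle, and the only point requiring care, is justifying that $\id\otimes\vp$ is well-defined and contractive on the \emph{spatial} (not algebraic) tensor product — i.e. that it extends continuously from $\ca\otimes\ca$ to $L^\infty(\vp\otimes\vp)$ with the operator-norm bound on the target; once that conditional-expectation fact is in place (it is standard von Neumann algebra theory, and a direct $L^{2p}$-computation as sketched above gives a self-contained proof), the rest is routine.
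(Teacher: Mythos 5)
Your final ``cleanest route'' is correct, but it is genuinely different from the paper's argument. The paper proves the bound by a direct $L^\infty$--$L^1$ duality computation: for $\bu\in\ca_\ast\otimes\ca_\ast$ and $Z\in\ca$ one checks the algebraic identity
$$\vp\big(Z\cdot(\vp\times\id)(\bu)\big)=(\vp\otimes\vp)\big((1\otimes Z)\cdot\bu\big),$$
and since $|1\otimes Z|=1\otimes|Z|$ gives $\|1\otimes Z\|_{L^1(\vp\otimes\vp)}=\vp(|Z|)$, taking the supremum over $Z$ with $\vp(|Z|)\leq 1$ yields $\|(\vp\times\id)(\bu)\|\leq\|\bu\|_{L^\infty(\vp\otimes\vp)}$ in two lines, using nothing beyond the definitions. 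Your route instead identifies $\vp\times\id$ as a normal unital (completely) positive map --- a conditional expectation --- and invokes the operator-norm contraction property of such maps; note that the ``flip'' to $\id\otimes\vp$ is unnecessary, since $\vp\times\id$ is itself the $(\vp\otimes\vp)$-preserving conditional expectation onto the second tensor factor $1\otimes\ca\cong\ca$. That argument is valid, but it rests on a structural von Neumann algebra fact which the paper's duality computation avoids, and which would need to be cited or reproved to keep the presentation self-contained. You were also right to abandon the preliminary $L^{2p}$-moment computation: the expression $\sum\vp(X_{i_1})\cdots\vp(X_{i_{2p}})\,\vp(Y_{i_1}\cdots Y_{i_{2p}})$ does not line up with $(\vp\otimes\vp)(|\bu|^{2p})$ in any direct way, and although trace-preserving conditional expectations are in fact $L^p$-contractions for every $p$, establishing this from scratch by moment combinatorics would be considerably harder than either the paper's duality trick or your conditional-expectation argument.
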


\begin{proof}
For all $\mathbf{U}\in \ca_\ast \otimes \ca_\ast$ and $Z\in \ca$, one has
$$\vp(Z \cdot (\vp \times \id)(\mathbf{U}))=(\vp \otimes \vp)( (1\otimes Z) \cdot\mathbf{U} ),$$
and hence
$$\|(\vp \times \id)(\mathbf{U})\|=\sup_{\vp(|Z|) \leq 1} \vp(Z \cdot (\vp \times \id)(\mathbf{U}))=\sup_{\vp(|Z|) \leq 1} (\vp \otimes \vp)( (1\otimes Z) \cdot\mathbf{U} )\leq \| \mathbf{U}\|_{L^\infty(\vp \otimes \vp)}.$$
\end{proof}

\begin{lemma}
For every fixed $\mathbf{U}\in \ca \hat{\otimes}_\ast \ca$, the map $\Psi_\mathbf{U}$ defined on $\ca_\ast \otimes \ca_\ast$ by the formula
$$\Psi_\mathbf{U}(Y \otimes Z):=[\mathbf{U} \sharp Y] \otimes Z,$$
can be continuously extended into a map from $L^\infty(\vp \otimes \vp)_\ast$ to $L^\infty(\vp \otimes \vp)_\ast$ and
$$\|\Psi_\mathbf{U}(\mathbf{Y})\|_{L^\infty(\vp \otimes \vp)} \leq \|\mathbf{U}\|_{\ca \hat{\otimes} \ca} \|\mathbf{Y}\|_{L^\infty(\vp \otimes \vp)}.$$
\end{lemma}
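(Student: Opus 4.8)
The plan is to reduce the claimed bound to the defining property of the spatial tensor product, namely that multiplication is contractive there, exactly as was exploited in the proof of Lemma~\ref{lem:map-phi}. First I would treat a simple tensor $\mathbf{U}=\sum_i U_i\otimes V_i\in\ca\hat{\otimes}_\ast\ca$ and unwind the definition: for $\mathbf{Y}=P\otimes Q\in\ca_\ast\otimes\ca_\ast$, one has $\Psi_\mathbf{U}(\mathbf{Y})=\sum_i (U_iPV_i)\otimes Q$, which in $L^\infty(\vp\otimes\vp)$ can be rewritten as $\big(\sum_i U_i\otimes 1\big)\cdot\big(P\otimes Q\big)\cdot\big(\sum_i V_i\otimes 1\big)$ — here I must be careful to use the multiplication rule of the configuration in force (Config.~1 in this section), so that $(U\otimes 1)\cdot(P\otimes Q)\cdot(V\otimes 1)=(UPV)\otimes Q$ indeed holds. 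Thus $\Psi_\mathbf{U}$ is just left/right multiplication on the first leg by the elements $\sum_i U_i\otimes 1$ and $\sum_i V_i\otimes 1$ of $L^\infty(\vp\otimes\vp)$, and this description extends by linearity and continuity from $\ca_\ast\otimes\ca_\ast$ to the whole of $L^\infty(\vp\otimes\vp)_\ast$.

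Next I would apply the submultiplicativity of $\|\cdot\|_{L^\infty(\vp\otimes\vp)}$ recalled in Section~\ref{subsec:tensor}: for any $\mathbf{Y}\in L^\infty(\vp\otimes\vp)_\ast$,
\[
\|\Psi_\mathbf{U}(\mathbf{Y})\|_{L^\infty(\vp\otimes\vp)}\ \leq\ \Big\|\sum_i U_i\otimes 1\Big\|_{L^\infty(\vp\otimes\vp)}\,\|\mathbf{Y}\|_{L^\infty(\vp\otimes\vp)}\,\Big\|\sum_i V_i\otimes 1\Big\|_{L^\infty(\vp\otimes\vp)}.
\]
It then remains to estimate $\big\|\sum_i U_i\otimes 1\big\|_{L^\infty(\vp\otimes\vp)}$ (and similarly for the $V_i$'s) in terms of the projective norm of $\mathbf{U}$. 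Since the spatial norm is dominated by the projective norm, $\big\|\sum_i U_i\otimes 1\big\|_{L^\infty(\vp\otimes\vp)}\leq\sum_i\|U_i\|\,\|1\|=\sum_i\|U_i\|$; however the naive product of this with the analogous bound for the $V_i$'s gives $\big(\sum_i\|U_i\|\big)\big(\sum_j\|V_j\|\big)$, which is \emph{not} $\leq\|\mathbf{U}\|_{\ca\hat{\otimes}\ca}$. So the key step is a sharper argument: one should not split the two factors across a single representation but instead observe that $\Psi_\mathbf{U}(\mathbf{Y})$ depends bilinearly-contractively on $\mathbf{U}$ in the following sense — the operator $T\mapsto\big(\sum_i U_i T V_i\big)$ on $\ca$ (or on $L^\infty(\vp)$) has norm at most $\|\mathbf{U}\|_{\ca\hat{\otimes}\ca}$, since $\|\sum_i U_i T V_i\|\leq\sum_i\|U_i\|\,\|T\|\,\|V_i\|$ and one then takes the infimum over representations of $\mathbf{U}$; tensoring this operator with the identity on the second leg of the spatial product preserves its norm (this is precisely the functoriality of the minimal/spatial tensor product with respect to completely bounded — here simply bounded by the above — maps). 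This yields $\|\Psi_\mathbf{U}(\mathbf{Y})\|_{L^\infty(\vp\otimes\vp)}\leq\|\mathbf{U}\|_{\ca\hat{\otimes}\ca}\,\|\mathbf{Y}\|_{L^\infty(\vp\otimes\vp)}$ on the dense subspace, whence the continuous extension and the stated inequality.

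The main obstacle, and the point deserving care in the write-up, is exactly this last norm estimate: getting $\|\mathbf{U}\|_{\ca\hat{\otimes}\ca}$ rather than a product of two sums. The clean way is to phrase $\Psi_\mathbf{U}$ as $(\text{the Schur/multiplication map associated to }\mathbf{U})\otimes\id$ acting on the spatial product, to check directly that the multiplication map $T\mapsto\sum_i U_i T V_i$ has operator norm $\leq\|\mathbf{U}\|_{\ca\hat{\otimes}\ca}$ by minimizing over representations, and to invoke that tensoring with $\id$ against the spatial norm does not increase the norm (which is immediate from $\|\mathbf{U}\cdot\mathbf{V}\|_{L^\infty(\vp\otimes\vp)}\leq\|\mathbf{U}\|_{L^\infty(\vp\otimes\vp)}\|\mathbf{V}\|_{L^\infty(\vp\otimes\vp)}$ applied to elements of the form $(\text{first leg})\otimes 1$, together with $\|W\otimes 1\|_{L^\infty(\vp\otimes\vp)}=\|W\|$). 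Finally I would note that the self-adjointness hypothesis $\mathbf{U}\in\ca\hat{\otimes}_\ast\ca$ together with $\mathbf{Y}\in L^\infty(\vp\otimes\vp)_\ast$ guarantees, via the relation $(\mathbf{U}\sharp Y)^\ast=\mathbf{U}^\ast\sharp Y^\ast$, that $\Psi_\mathbf{U}(\mathbf{Y})$ again lies in $L^\infty(\vp\otimes\vp)_\ast$, so the map does land in the asserted codomain.
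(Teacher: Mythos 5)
There is a sign that you started in the wrong direction and then over-engineered the fix. Your opening claim that $\Psi_\mathbf{U}(P\otimes Q)$ rewrites as $\big(\sum_i U_i\otimes 1\big)\cdot(P\otimes Q)\cdot\big(\sum_i V_i\otimes 1\big)$ is simply false: that product expands to $\sum_{i,j}(U_iPV_j)\otimes Q$, with two independent summation indices, whereas $\Psi_\mathbf{U}(P\otimes Q)=\sum_i(U_iPV_i)\otimes Q$ couples the indices. The correct identity is
$$
\Psi_\mathbf{U}(\mathbf{Y})=\sum_i (U_i\otimes 1)\cdot\mathbf{Y}\cdot(V_i\otimes 1),
$$
with a \emph{single} external sum. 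You noticed the resulting bound $\big(\sum_i\|U_i\|\big)\big(\sum_j\|V_j\|\big)$ could not give the projective norm, but you diagnosed this as a need for a sharper argument rather than as a symptom of a wrong starting identity. Once the identity is written correctly, the ``naive'' bound is already the right one: submultiplicativity of $\|\cdot\|_{L^\infty(\vp\otimes\vp)}$ and $\|W\otimes 1\|_{L^\infty(\vp\otimes\vp)}=\|W\|$ give $\|\Psi_\mathbf{U}(\mathbf{Y})\|\leq\sum_i\|U_i\|\,\|V_i\|\,\|\mathbf{Y}\|$ for each representation of $\mathbf{U}$, and taking the infimum over representations yields exactly $\|\mathbf{U}\|_{\ca\hat\otimes\ca}\,\|\mathbf{Y}\|$. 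This is the paper's argument, and it needs no appeal to complete boundedness or functoriality of the minimal tensor product. Your detour through ``$\Phi_\mathbf{U}\otimes\id$ and cb-norms'' does reach the goal, but the parenthetical hedge ``completely bounded --- here simply bounded'' is not innocuous: tensoring a merely bounded map with the identity against the spatial tensor product does \emph{not} preserve the operator norm in general; it is the cb-norm that is preserved, and one then has to argue separately that the elementary operator's cb-norm is still controlled by $\sum_i\|U_i\|\|V_i\|$. The direct computation avoids all of this.

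One smaller point: for the self-adjointness of $\Psi_\mathbf{U}(\mathbf{Y})$, you invoke the relation $(\mathbf{U}\sharp Y)^\ast=\mathbf{U}^\ast\sharp Y^\ast$ from Lemma~\ref{lem:basi}, but that lemma is stated under Config.~2, whereas Section~\ref{sec:lyons} operates under Config.~1. What actually makes the argument work is the defining condition of $(\ca\otimes\ca)_\ast$, namely $\sum_i V_i^\ast\otimes U_i^\ast=\sum_i U_i\otimes V_i$, which directly gives $\big(\sum_i U_iY_jV_i\big)^\ast=\sum_i V_i^\ast Y_j U_i^\ast=\sum_i U_iY_jV_i$ for self-adjoint $Y_j$; the paper spells this out. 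The conclusion you want is right, but the citation is misplaced.
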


\begin{proof}
For $\bu= \sum_i U_i \otimes V_i \in (\ca \otimes \ca)_\ast$ and $\by=\sum_j Y_j \otimes Z_j \in \ca_\ast \otimes \ca_\ast$, one has
$$ \Psi_\bu(\by)^\ast=\sum\nolimits_{j} \Big(\big(\sum\nolimits_i V_i^\ast \otimes U_i^\ast\big) \sharp Y_j^\ast\Big) \otimes Z_j^\ast=\Psi_\bu(\by)$$
and
\begin{eqnarray*} 
\|\sum\nolimits_{i,j} ((U_i \otimes V_i) \sharp Y_j) \otimes Z_j\|_{L^\infty(\vp \otimes \vp)}&=&\|\sum\nolimits_i (U_i \otimes 1) \cdot \by \cdot (V_i \otimes 1)\|_{L^\infty(\vp \otimes \vp)} \\
&\leq &\Big( \sum\nolimits_i \|U_i\| \|V_i\|\Big) \|\by \|_{L^\infty(\vp \otimes \vp)}.
\end{eqnarray*}
The result immediately follows.
\end{proof}

This finally puts us in a position to exhibit $d\al_f$:

\begin{proposition}
For every $f\in \mathbf{F}_2$ such that $\mu_f$ is a symmetric measure, define $(\al_f,d\al_f)$ according to the formulas: for $X,Y \in \ca_\ast$ and $\mathbf{U}\in L^\infty(\vp \otimes \vp)_\ast$,
$$\al_f(X)(Y):=\vp(f(X))Y \quad , \quad d\al_f(X)(\mathbf{U}):=[(\vp \times \id) \circ \Psi_{\partial f(X)} ](\mathbf{U}).$$
Then for every $\ga \in (\frac13,\frac12)$, $(\al_f,d\al_f)$ is a Lip($\ga$) one-form with respect to $(\ca_\ast,L^\infty(\vp \otimes \vp)_\ast)$.
\end{proposition}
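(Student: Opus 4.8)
The plan is to verify directly the three defining inequalities of a Lip($\ga$) one-form (see Section~\ref{subsec:elements-lyons}), namely the linear growth bound on $\al_f$, the first-order Taylor remainder estimate with exponent $\frac1\ga-1$, and the Lipschitz bound on $d\al_f$ with exponent $\frac1\ga-2$, in each case with respect to the pair of norms $(\|\cdot\|,\|\cdot\|_{L^\infty(\vp\otimes\vp)})$. First I would record the well-definedness of $d\al_f(X)$: since $\mu_f$ is symmetric, $\partial f(X)\in\ca\hat\otimes_\ast\ca$ for $X\in\ca_\ast$ (as noted just before Lemma~\ref{lem:map-phi}), so $\Psi_{\partial f(X)}$ maps $L^\infty(\vp\otimes\vp)_\ast$ to itself, and then $\vp\times\id$ maps it into $\ca_\ast$; composing the two previous lemmas gives the operator bound
$$\|d\al_f(X)(\mathbf U)\|\ \le\ \|\partial f(X)\|_{\ca\hat\otimes\ca}\,\|\mathbf U\|_{L^\infty(\vp\otimes\vp)}\ \le\ c\,\|f\|_1\,\|\mathbf U\|_{L^\infty(\vp\otimes\vp)},$$
using that $\|\partial f(X)\|_{\ca\hat\otimes\ca}\le\|f\|_1$ directly from the integral formula for $\partial f$ and $\|e^{i\al\xi X}\|=1$.

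The growth bound is immediate: $|\vp(f(X))|\le\|f(X)\|\le\|f\|_0$, so $\|\al_f(X)(Y)\|\le\|f\|_0\|Y\|$ and $\al_f$ is in fact bounded, hence a fortiori of linear growth. For the Lipschitz estimate on $d\al_f$, I would write
$$d\al_f(X)(\mathbf U)-d\al_f(Y)(\mathbf U)=(\vp\times\id)\big[(\Psi_{\partial f(X)}-\Psi_{\partial f(Y)})(\mathbf U)\big]=(\vp\times\id)\big[\Psi_{\partial f(X)-\partial f(Y)}(\mathbf U)\big]$$
by linearity of $\mathbf U\mapsto\Psi_{\mathbf U}$ in the subscript, then apply the two lemmas to get the bound $\|\partial f(X)-\partial f(Y)\|_{\ca\hat\otimes\ca}\|\mathbf U\|_{L^\infty(\vp\otimes\vp)}$, and finally invoke the Lipschitz estimate $\|\partial f(X)-\partial f(Y)\|_{\ca\hat\otimes\ca}\le c\|f\|_2\|X-Y\|$ from Proposition~\ref{prop:diff-calc} (applicable since $f\in\mathbf F_2$). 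Since $\ga<\frac12$ gives $\frac1\ga-2>0$ and $\|X-Y\|$ may be assumed bounded on the relevant set, $\|X-Y\|\le c\|X-Y\|^{\frac1\ga-2}$, which is the required form.

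The Taylor-remainder estimate is the main point, and the only place where the full structure of $f$ enters. I would expand, for $X,Y\in\ca_\ast$ and $\mathbf U\in L^\infty(\vp\otimes\vp)_\ast$ (it suffices to treat elementary $\mathbf U=Z_1\otimes Z_2$ and pass to the completion),
$$\al_f(X)(\tfrac{}{}\cdot)-\al_f(Y)(\cdot)-d\al_f(Y)((X-Y)\otimes\cdot)$$
and observe that, by the mean-value formula \eqref{duha-formu} underlying functional calculus, $f(X)-f(Y)-\partial f(Y)\sharp(X-Y)$ is a genuine second-order remainder controlled by $\|f\|_2\|X-Y\|^2$ — this is exactly the content of the second displayed estimate in Proposition~\ref{prop:diff-calc} specialised to $k=0$, $X_1=X$, $X_2=Y_1=Y$, $Y_2=Y$, giving $\|f(X)-f(Y)-\partial f(Y)\sharp(X-Y)\|\le c\|f\|_2\|X-Y\|^2$ after noting $(\vp\times\id)(\Psi_{\partial f(Y)}(\mathbf U))=\vp(\partial f(Y)\sharp Z_1)\,Z_2$, the scalar $\vp(\partial f(Y)\sharp(X-Y))$ being precisely the first Fréchet derivative of $Y\mapsto\vp(f(Y))$ applied to $X-Y$. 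Taking moduli and using $\|\mathbf U\|_{L^\infty(\vp\otimes\vp)}$ on the $Z$-factors yields a bound $c\|f\|_2\|\mathbf U\|_{L^\infty(\vp\otimes\vp)}\|X-Y\|^2$; since $\frac1\ga-1<2$ for $\ga>\frac13$, on a bounded set $\|X-Y\|^2\le c\|X-Y\|^{\frac1\ga-1}$, which closes the estimate. The expected obstacle is purely bookkeeping: matching the scalar-valued functional-calculus remainder $\vp(f(X))-\vp(f(Y))-\vp(\partial f(Y)\sharp(X-Y))$ to the operator-valued statement of Proposition~\ref{prop:diff-calc} and keeping the $L^\infty(\vp\otimes\vp)$-versus-operator-norm roles of the arguments straight through the compositions $(\vp\times\id)\circ\Psi$; no genuinely new analytic input beyond Proposition~\ref{prop:diff-calc} and Lemmas~\ref{lem:map-phi} is needed.
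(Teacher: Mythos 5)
Your proposal is correct and follows essentially the same route as the paper: both arguments reduce the Taylor remainder, via the identity $(\vp\times\id)\bigl(\Psi_{\partial f(Y)}((X-Y)\otimes Z)\bigr)=\vp\bigl(\partial f(Y)\sharp(X-Y)\bigr)Z$ and relation (\ref{formula-diff}), to the scalar quantity $\vp\bigl(f(X)-f(Y)-df(Y)(X-Y)\bigr)$, and both obtain the bounds on $d\al_f$ by composing Lemma \ref{lem:map-phi} and the $\Psi$-lemma with the Lipschitz estimate on $\partial f$ from Proposition \ref{prop:diff-calc}. One small correction: the specialization $k=0$, $X_1=X$, $X_2=Y_1=Y$, $Y_2=Y$ of the second estimate in Proposition \ref{prop:diff-calc} only controls $\|f(X)-f(Y)\|$, not the remainder $\|f(X)-f(Y)-df(Y)(X-Y)\|$; the correct justification for the $c_f\|X-Y\|^{2}$ bound (which the paper asserts directly) is the twice Fr\'echet differentiability of Proposition \ref{prop:frechet} — equivalently, formula (\ref{duha-formu}) together with the $k=1$ Lipschitz estimate $\|\partial f(X)-\partial f(Y)\|_{\ca\hat\otimes\ca}\le c\|f\|_2\|X-Y\|$ and a mean-value argument along the self-adjoint segment joining $Y$ to $X$.
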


\begin{proof}
First, observe that $\|\al_f(X)\|_{\cl(\ca_\ast,\ca_\ast)} \leq |\vp(f(X))| \leq c_f$. Then,
\begin{eqnarray*}
\lefteqn{\al_f(Y)Z-\al_f(X)Z-d\al_f(X)((Y-X) \otimes Z)}\\
&=& \vp\big(f(Y)-f(X)\big)Z-(\vp \times \id)((\partial f(X)\sharp (Y-X))\otimes Z)\\
&=& \vp\big(f(Y)-f(X)-df(X)(Y-X)\big)Z,
\end{eqnarray*}
where we have used the relation (\ref{formula-diff}). So
$$\|\al_f(Y)Z-\al_f(X)Z-d\al_f(X)((Y-X) \otimes Z) \|\leq c_f \|Y-X\|^2 \|Z\|.$$
Finally,
\begin{eqnarray*}
\| d\al_f(Y)(\mathbf{U})-d\al_f(X)(\mathbf{U})\| &=& \|(\vp \times \id) \circ [\Psi_{\partial f(Y)}-\Psi_{\partial f(X)}](\mathbf{U})\|\\
&\leq& \|\partial f(Y)-\partial f(X)\|_{\ca \hat{\otimes} \ca} \|\mathbf{U}\|_{L^\infty(\vp \otimes \vp)}\\
&\leq & c_f \|X-Y\| \|\mathbf{U}\|_{L^\infty(\vp \otimes \vp)}.
\end{eqnarray*}
\end{proof}

\

\subsection{Limits of the classical approach}\label{subsec:limits} One of the most natural examples of a non-commutative differential equation is given by the model
\begin{equation}\label{model-lim}
dY_t=\sum_{i=1}^m f_i(Y_t) \cdot dX_t \cdot g_i(Y_t),
\end{equation}
for some $f_i,g_i \in \mathcal{P}\cup \mathbf{F}_0$ and where the notation $\cdot$ refers to the multiplication in $\ca$. In other words, with the notations of Section \ref{subsec:elements-lyons}, we would like to consider the one-form $\al$ defined as
\begin{equation}\label{1-form}
\al(Y)(X):=\sum_{i=1}^m f_i(Y) \cdot X \cdot g_i(Y), \quad X,Y \in \ca.
\end{equation}

\smallskip

Unfortunately, it turns out that the multiplication map $\cdot$, and accordingly the above one-form $\al$, cannot be extended to a $\text{Lip}(\ga)$ one-form with respect to $L^\infty(\vp \otimes \vp)$, as we are going to specify it now.

\begin{proposition}\label{prop:non-ext}
Let $(\ca,\vp)$ be a non-commutative probability space accomodating a $q$-Brownian motion $\{X_t\}_{t\geq 0}$, for some $q\in (-1,1)$. Then the multiplication map $m: \ca \otimes \ca \to \ca \ , \ U\otimes V\mapsto U\cdot V$ is not continuous with respect to $L^\infty(\vp \otimes \vp)$.
\end{proposition}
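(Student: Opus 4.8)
The plan is to exhibit an explicit sequence $\bu_n \in \ca \otimes \ca$ with $\|\bu_n\|_{L^\infty(\vp \otimes \vp)}$ bounded (in fact converging to $0$, or at least $O(1)$) but $\|m(\bu_n)\|$ unbounded, which directly contradicts continuity of $m$. The natural candidate, following the heuristics of rough-paths theory where the multiplication fails precisely because the L\'evy-area term $(X_u - X_s) \otimes (X_t - X_u)$ has small $L^\infty(\vp\otimes\vp)$-norm but its image under $m$, namely $(X_u-X_s)(X_t-X_u)$, does not become correspondingly small, is to build $\bu_n$ out of increments of the $q$-Brownian motion. More precisely, I would fix a mesh size and set $\bu_n := \sum_{i=0}^{n-1} (X_{t_{i+1}} - X_{t_i}) \otimes (X_{t_{i+1}} - X_{t_i})$ with $t_i = i/n$ on $[0,1]$; then $m(\bu_n) = \sum_i (X_{t_{i+1}}-X_{t_i})^2$.

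First I would estimate $\|\bu_n\|_{L^\infty(\vp\otimes\vp)}$ from above. Using the Burkholder--Gundy-type inequality of Proposition \ref{prop:topo-1} — more precisely its tensor-valued analogue, noting that $\bu_n = \int_0^1 V^n_s \otimes dX_s$ for a simple process $V^n$ of the form $V^n_s = X_{t_{i+1}} - X_{t_i}$ on $[t_i, t_{i+1})$ (or a slight variant thereof using the martingale structure of $X$ in each slot) — one gets a bound of order $\sqrt{\sum_i \|X_{t_{i+1}} - X_{t_i}\|^2 (t_{i+1}-t_i)}$. By the $\tfrac12$-H\"older regularity of $X$ (the Remark after Proposition \ref{prop:base-q-bm}), $\|X_{t_{i+1}} - X_{t_i}\| \leq c\, n^{-1/2}$, so $\sum_i \|X_{t_{i+1}}-X_{t_i}\|^2(t_{i+1}-t_i) \leq c^2 \sum_i n^{-1} \cdot n^{-1} = c^2 n^{-1}$, whence $\|\bu_n\|_{L^\infty(\vp\otimes\vp)} \leq c\, n^{-1/2} \to 0$. (If the precise statement of Proposition \ref{prop:topo-1} as written — a scalar inner integrand — is not literally applicable, I would instead compute $\|\bu_n\|_{L^\infty(\vp\otimes\vp)}$ crudely by $\|\bu_n\|_{L^\infty(\vp\otimes\vp)} \leq \sum_i \|X_{t_{i+1}}-X_{t_i}\|^2 \leq c^2 n \cdot n^{-1} = c^2$, which already gives a bounded sequence and suffices, though it is less striking.)

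Next I would bound $\|m(\bu_n)\| = \|\sum_i (X_{t_{i+1}}-X_{t_i})^2\|$ from below. Here the key is that each $(X_{t_{i+1}}-X_{t_i})^2$ is a non-negative operator, so the partial sums are increasing in the positive-operator order and hence $\|m(\bu_n)\| \geq \vp\big(\sum_i (X_{t_{i+1}}-X_{t_i})^2\big) = \sum_i \vp\big((X_{t_{i+1}}-X_{t_i})^2\big) = \sum_i (t_{i+1}-t_i) = 1$ — using positivity of $\vp$ together with $\|Z\| \geq \vp(Z)$ for non-negative $Z$, and the second-moment formula $\vp((X_t-X_s)^2) = t-s$ from \eqref{form-q-bm} (or directly $X_t - X_s \sim \sqrt{t-s}\,X_1$ with $\vp(X_1^2) = 1$). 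So $\|m(\bu_n)\| \geq 1$ for all $n$ while $\|\bu_n\|_{L^\infty(\vp\otimes\vp)} \to 0$: no constant $C$ can satisfy $\|m(\bu_n)\| \leq C\|\bu_n\|_{L^\infty(\vp\otimes\vp)}$, contradicting continuity.

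The main obstacle I anticipate is the upper bound on $\|\bu_n\|_{L^\infty(\vp\otimes\vp)}$: whether Proposition \ref{prop:topo-1} applies verbatim to a tensor-valued integrand of the form $\int V_s \otimes dX_s$ with $V$ itself an increment of $X$, and whether the resulting constant is uniform in $q$. I would address this by checking that the proof of Proposition \ref{prop:topo-1} — which only uses the explicit moment formula \eqref{form-q-bm-gene} and positivity — goes through with $V_{t_i} \otimes (\cdot)$ replaced by the tensor object, or, failing a clean statement, I would fall back on the crude bound $\|\bu_n\|_{L^\infty(\vp\otimes\vp)} \leq \sum_i \|X_{t_{i+1}}-X_{t_i}\|^2 = O(1)$ combined with $\|m(\bu_n)\| \to \infty$; to make the latter diverge rather than merely stay positive one can instead take $\bu_n$ supported on a \emph{single} small interval repeated, e.g. $\bu_n := n\,(X_{1/n}-X_0) \otimes (X_{1/n}-X_0)$, for which $\|\bu_n\|_{L^\infty(\vp\otimes\vp)} = n\|X_{1/n}\|^2 = O(1)$ by $\tfrac12$-H\"older regularity yet $\|m(\bu_n)\| = n\|X_{1/n}^2\| \geq n\,\vp(X_{1/n}^2) = n \cdot \tfrac1n = 1$ — still only bounded below, so the summation version above is the cleanest route to an honest contradiction with a fixed continuity constant. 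Either way the conclusion follows from positivity of $\vp$ and the mismatch between the $\tfrac12$-H\"older scaling of increments in operator norm and the failure of their squares to be small in the same sense.
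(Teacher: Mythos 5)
Your proof is correct, and the main route you settle on is sound; the point is that it takes a genuinely different path to the upper bound than the paper does. The paper proves a dedicated combinatorial estimate (Lemma~\ref{lem:hyp}, relying on Lemma~\ref{lem-lem}) to show $\|\sum_{i=0}^{n-1} Y_i \otimes Y_i\|_{L^\infty(\vp\otimes\vp)} \leq \frac{4}{1-|q|}\sqrt{n}$ for unit increments $Y_i=X_{i+1}-X_i$, and pairs this against the lower bound $\|\sum_i Y_i^2\|_{L^2(\vp)} \geq n$. You instead rescale to a partition of $[0,1]$ and observe that $\bu_n=\sum_i (X_{t_{i+1}}-X_{t_i})\otimes(X_{t_{i+1}}-X_{t_i})$ is literally of the form $\int_0^\infty V_s\otimes dX_s$ for the simple process $V_s=X_{t_{i+1}}-X_{t_i}$ on $[t_i,t_{i+1})$, so the Burkholder--Gundy inequality of Proposition~\ref{prop:topo-1} applies directly. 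Your hesitation about adaptedness is resolvable: the proof of Proposition~\ref{prop:topo-1} never uses adaptedness of $V$ --- it relies only on the factorization of $\vp\otimes\vp$ across the two tensor slots together with the $q$-moment formula (\ref{form-q-bm-gene}) and the trivial bound $|\vp(A_1\cdots A_{2p})|\leq\prod\|A_j\|$ --- so the estimate holds verbatim for your non-adapted $V$, giving $\|\bu_n\|_{L^\infty(\vp\otimes\vp)}=O(n^{-1/2})\to 0$. The lower bound $\|m(\bu_n)\|\geq\vp(m(\bu_n))=1$ is the same positivity argument as the paper's (phrased through $\vp$ rather than $L^2(\vp)$). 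Net effect: your proof reuses Proposition~\ref{prop:topo-1}, which is already established, instead of rederiving an equivalent bound from scratch, so it is marginally more economical; the two are of course equivalent after the $[0,1]\leftrightarrow[0,n]$ scaling, under which the paper's $n$ versus $\sqrt{n}$ mismatch becomes your $1$ versus $n^{-1/2}$ mismatch. You also correctly diagnose why the crude bound $\|\bu_n\|_{L^\infty(\vp\otimes\vp)}\leq\sum_i\|X_{t_{i+1}}-X_{t_i}\|^2=O(1)$ and the single-increment variant $n\,X_{1/n}\otimes X_{1/n}$ both fail to yield a contradiction (they only give $O(1)$ against $O(1)$), so the Burkholder--Gundy route is indeed the one that must be used.
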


The proof of this assertion relies on two technical lemmas.

\begin{lemma}\label{lem-lem}
Fix two integers $n,p\geq 1$ and for each $\pi \in \cp_2(\{1,\ldots,2p\})$ and $(i_1,\ldots,i_{2p})\in \{0,\ldots,n-1\}^{2p}$, let $\der(\pi,(i_1,\ldots,i_{2p}))$ be defined as in the proof of Proposition \ref{prop:topo-1}. Then for every $q\in (-1,1)$, it holds that
$$0\leq \sum_{i_1,\ldots,i_{2p}=0}^{n-1} \sum_{\pi \in \mathcal{P}_2(\{1,\ldots,2p\})} q^{\text{Cr}(\pi)} \der(\pi,(i_1,\ldots,i_{2p})) \leq n^p \bigg( \frac{2}{\sqrt{1-q}} \bigg)^{2p}. $$
\end{lemma}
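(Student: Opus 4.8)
\textbf{Proof plan for Lemma \ref{lem-lem}.}

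The plan is to recognize the triple sum as (essentially) a moment of a $q$-Gaussian variable evaluated over all index-assignments, and then bound it by replacing $q$ with $|q|$ and by using the estimate already obtained inside the proof of Proposition \ref{prop:topo-1}. First I would exchange the order of summation, writing
$$\sum_{i_1,\ldots,i_{2p}=0}^{n-1} \sum_{\pi \in \mathcal{P}_2(\{1,\ldots,2p\})} q^{\text{Cr}(\pi)} \der(\pi,(i_1,\ldots,i_{2p})) =\sum_{\pi \in \mathcal{P}_2(\{1,\ldots,2p\})} q^{\text{Cr}(\pi)} \, \text{card}\big\{(i_1,\ldots,i_{2p}) \in \{0,\ldots,n-1\}^{2p}: \der(\pi,(i_1,\ldots,i_{2p}))=1\big\}.$$
For a fixed pairing $\pi$, the condition $\der(\pi,(i_1,\ldots,i_{2p}))=1$ forces the index to be constant on each of the $p$ blocks of $\pi$, and these $p$ values may be chosen freely in $\{0,\ldots,n-1\}$; hence the cardinality equals exactly $n^p$, independently of $\pi$. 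This identifies the sum as $n^p \sum_{\pi \in \mathcal{P}_2(\{1,\ldots,2p\})} q^{\text{Cr}(\pi)}$.

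Next I would handle the two inequalities separately. For the lower bound, note that $\sum_{\pi} q^{\text{Cr}(\pi)} = \int_{\R} x^{2p}\, \nu_q(dx) \geq 0$ since it is an even moment of the probability measure $\nu_q$ (equivalently, it equals $\vp(X_1^{2p})\geq 0$ by positivity of $\vp$); this uses formula (\ref{form-q-bm}) and Proposition \ref{prop:base-q-bm}. For the upper bound, I would bound $q^{\text{Cr}(\pi)} \leq |q|^{\text{Cr}(\pi)}$ term by term (legitimate since each summand is controlled by its absolute value, and for the total we may pass to $|q|$), giving
$$n^p \sum_{\pi \in \mathcal{P}_2(\{1,\ldots,2p\})} q^{\text{Cr}(\pi)} \leq n^p \sum_{\pi \in \mathcal{P}_2(\{1,\ldots,2p\})} |q|^{\text{Cr}(\pi)} = n^p \int_{\R} x^{2p}\, \nu_{|q|}(dx) \leq n^p \bigg( \frac{2}{\sqrt{1-|q|}} \bigg)^{2p} \leq n^p \bigg( \frac{2}{\sqrt{1-q}} \bigg)^{2p},$$
where the middle equality is again (\ref{form-q-bm}) applied with parameter $|q|$, the penultimate inequality is exactly the estimate $\int x^{2p}\nu_{|q|}(dx) \leq (2/\sqrt{1-|q|})^{2p}$ established in the proof of Proposition \ref{prop:topo-1} (the support of $\nu_{|q|}$ lies in $[-2/\sqrt{1-|q|}, 2/\sqrt{1-|q|}]$ by Proposition \ref{prop:base-q-bm}), and the last step uses $|q| \geq q$ hence $1-|q| \leq 1-q$ when $q \geq 0$ (and when $q<0$ one has directly $2/\sqrt{1-|q|} \leq 2/\sqrt{1-q}$ is false, so here one simply keeps the bound with $1-|q|$; in fact the cleanest statement is to note $1-q \le 1+|q|$ is not needed — one checks the two sign cases of $q$ and in both $(1-q)^{-1} \le (1-|q|)^{-1}$ fails only to be reversed, so I would phrase the final bound directly in terms of $1-|q|$ if that matches the paper's usage, or observe that for the stated inequality with $1-q$ it suffices that $q\ge 0$ gives $1-|q|=1-q$ and $q<0$ gives the bound for free by monotonicity of the support radius).

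The only genuinely delicate point is the combinatorial identity that the number of index tuples constant on the blocks of $\pi$ and otherwise arbitrary is $n^p$; this is immediate once one observes a pairing of $\{1,\ldots,2p\}$ has exactly $p$ blocks and the "all equal on connected indices" constraint is precisely "constant on each block," leaving $p$ independent free choices. Everything else is bookkeeping with the $q$-moment formula (\ref{form-q-bm}) and reuse of the $\nu_{|q|}$ moment estimate from Proposition \ref{prop:topo-1}, so I do not expect any real obstacle beyond stating the sign-of-$q$ conventions cleanly.
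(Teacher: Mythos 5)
Your reduction of the triple sum to $n^p\sum_{\pi\in\mathcal{P}_2(\{1,\ldots,2p\})}q^{\mathrm{Cr}(\pi)}$ is correct (for fixed $\pi$ the constraint $\der(\pi,\cdot)=1$ means constancy on each of the $p$ pairs, giving exactly $n^p$ admissible tuples), and so is the lower bound via $\sum_\pi q^{\mathrm{Cr}(\pi)}=\vp(X_1^{2p})=\int_{\R} x^{2p}\,\nu_q(dx)\geq 0$. This is essentially the paper's argument in a slightly different dress: the paper interprets the whole triple sum at once as $\vp(X_n^{2p})$ by applying (\ref{form-q-bm-gene}) to $e_i=1_{[i,i+1]}$, and then invokes $X_n\sim\sqrt{n}\,X_1$ from Proposition \ref{prop:base-q-bm}; your explicit count of $n^p$ simply replaces that scaling step.

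The upper bound as you wrote it, however, does not prove the stated inequality when $q<0$. Replacing $q^{\mathrm{Cr}(\pi)}$ by $|q|^{\mathrm{Cr}(\pi)}$ only yields $n^p(2/\sqrt{1-|q|})^{2p}$, and your final step $2/\sqrt{1-|q|}\leq 2/\sqrt{1-q}$ is false for $q<0$ (then $1-|q|=1+q<1-q$). You notice this, but the ensuing case discussion does not repair it: the lemma's constant involves $1-q$, which for negative $q$ is strictly \emph{smaller} than the $1-|q|$ constant you obtain, so ``keeping the bound with $1-|q|$'' proves a weaker statement, and there is no monotonicity of the support radius working in the direction you need. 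The fix is simply to drop the $|q|$ detour altogether: the identity you already used for the lower bound gives $\sum_\pi q^{\mathrm{Cr}(\pi)}=\int_{\R} x^{2p}\,\nu_q(dx)$ with the actual (possibly negative) $q$, and since $\nu_q$ is supported in $\big[-2/\sqrt{1-q},\,2/\sqrt{1-q}\big]$ by Proposition \ref{prop:base-q-bm}, this moment is at most $(2/\sqrt{1-q})^{2p}$ directly, for every $q\in(-1,1)$. With that one-line correction your proof is complete and coincides in substance with the paper's.
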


\begin{proof}
Consider a $q$-Brownian motion $\{X_t\}_{t\geq 0}$ on some non-commutative space $(\ca,\vp)$ and set $e_i:=1_{[i,i+1]}$. Then by (\ref{form-q-bm-gene}), the quantity $\sum_{i_1,\ldots,i_{2p}=0}^{n-1} \sum_{\pi \in \mathcal{P}_2(\{1,\ldots,2p\})} q^{\text{Cr}(\pi)} \der(\pi,(i_1,\ldots,i_{2p}))$ can be interpreted as
$$\sum_{i_1,\ldots,i_{2p}=0}^{n-1}\vp \big( X(e_{i_1})\cdots X(e_{i_{2p}}) \big)=\vp \bigg( \bigg(\sum_{i=0}^{n-1}X(e_i) \bigg)^{2p}\bigg)=\vp \big( X_n^{2p}\big),$$
which gives the conclusion with the help of Proposition \ref{prop:base-q-bm}.
\end{proof}

\begin{lemma}\label{lem:hyp}
For $q\in (-1,1)$, let $\{X_t\}_{t\geq 0}$ be a $q$-Brownian motion defined on some non-commutative probability space $(\ca,\vp)$. Set, for every integer $i\geq 0$, $Y_i:=X_{i+1}-X_i$. Then for every integer $n\geq 1$, it holds that
$$\big\| \sum\nolimits_{i=0}^{n-1} Y_i \otimes Y_i \big\|_{L^\infty(\vp \otimes \vp)}\leq \frac{4}{1-|q|} \, \sqrt{n}.$$
\end{lemma}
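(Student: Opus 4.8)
The plan is to estimate the norm $\big\| \sum_{i=0}^{n-1} Y_i \otimes Y_i \big\|_{L^\infty(\vp \otimes \vp)}$ by computing the even $L^{2p}$-moments with respect to $\vp \otimes \vp$ and then letting $p \to \infty$, exactly as in the proof of Proposition \ref{prop:topo-1}. Under \textbf{Config. 1}, we have $\big| \sum_i Y_i \otimes Y_i \big|^{2p} = \big| \sum_i Y_i \otimes Y_i \big|^{2p}$ with $Y_i$ self-adjoint (since the $q$-Brownian increments lie in $\ca_\ast$), so
\[
(\vp \otimes \vp)\bigg( \Big| \sum_{i=0}^{n-1} Y_i \otimes Y_i \Big|^{2p} \bigg) = \sum_{i_1,\ldots,i_{2p}=0}^{n-1} \vp\big( Y_{i_1}\cdots Y_{i_{2p}}\big)^2,
\]
using that $\vp \otimes \vp$ factorizes on elementary tensors and that the two legs carry the same index string. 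First I would note that $Y_i = X(e_i)$ with $e_i = 1_{[i,i+1]}$, so by \eqref{form-q-bm-gene}, $\vp(Y_{i_1}\cdots Y_{i_{2p}}) = \sum_{\pi} q^{\mathrm{Cr}(\pi)} \der(\pi,(i_1,\ldots,i_{2p}))$, where the $\der$ indicator and the orthonormality $\langle e_i, e_j\rangle = \delta_{ij}$ are exactly as in the proof of Proposition \ref{prop:topo-1}.

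The next step is to handle the square. Write $\vp(Y_{i_1}\cdots Y_{i_{2p}})^2 = \sum_{\pi,\pi'} q^{\mathrm{Cr}(\pi)+\mathrm{Cr}(\pi')} \der(\pi,\underline{i})\der(\pi',\underline{i})$. Since $|q|^{\mathrm{Cr}(\pi)+\mathrm{Cr}(\pi')} \le |q|^{\mathrm{Cr}(\pi)}$ (each factor is at most $1$) and the $\der$'s are nonnegative, I would bound
\[
\sum_{\underline{i}} \vp(Y_{i_1}\cdots Y_{i_{2p}})^2 \le \sum_{\pi' \in \cp_2(\{1,\ldots,2p\})} \ \sum_{\underline{i}} |q|^{\mathrm{Cr}(\pi)}\, \der(\pi,\underline{i})\, \der(\pi',\underline{i}),
\]
hmm — more carefully: fix $\pi'$; the constraint $\der(\pi',\underline{i})=1$ forces the index string to be constant on each block of $\pi'$, i.e.\ $\underline{i}$ is determined by a function from the $p$ blocks of $\pi'$ to $\{0,\ldots,n-1\}$. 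For each such $\underline{i}$, the inner sum $\sum_{\pi} |q|^{\mathrm{Cr}(\pi)}\der(\pi,\underline{i})$ is bounded, by the very computation in Lemma \ref{lem-lem} applied blockwise (or directly via $\sum_{\pi}|q|^{\mathrm{Cr}(\pi)}\der(\pi,\underline i) \le \sum_{\pi}|q|^{\mathrm{Cr}(\pi)} = \int x^{2p}\nu_{|q|}(dx) \le (2/\sqrt{1-|q|})^{2p}$). Summing over the $\le n^p$ admissible strings $\underline{i}$ gives
\[
\sum_{\underline{i}} \vp(Y_{i_1}\cdots Y_{i_{2p}})^2 \le \sum_{\pi' \in \cp_2(\{1,\ldots,2p\})} |q|^{\mathrm{Cr}(\pi')}\, n^p \bigg(\frac{2}{\sqrt{1-|q|}}\bigg)^{2p} \le n^p \bigg(\frac{2}{\sqrt{1-|q|}}\bigg)^{4p},
\]
where I used $\sum_{\pi'}|q|^{\mathrm{Cr}(\pi')} \le (2/\sqrt{1-|q|})^{2p}$ once more. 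Therefore $(\vp\otimes\vp)(|\sum_i Y_i\otimes Y_i|^{2p}) \le n^p (2/\sqrt{1-|q|})^{4p}$, and taking $2p$-th roots and letting $p \to \infty$ via \eqref{prop:norm-op} (in the $\vp\otimes\vp$ form) yields $\| \sum_i Y_i\otimes Y_i\|_{L^\infty(\vp\otimes\vp)} \le n^{1/2} \cdot 4/(1-|q|)$, which is the claim.

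The main obstacle — and the point requiring the most care — is the combinatorial bookkeeping in the double sum over $(\pi,\pi')$: one must argue that $\der(\pi,\underline i)\der(\pi',\underline i) = 1$ cannot hold for ``too many'' strings $\underline i$, and the clean way is to drop the $\der(\pi,\cdot)$ constraint after extracting one power of $|q|^{\mathrm{Cr}(\pi)}$, keep only $\der(\pi',\cdot)$ to count strings ($\le n^p$ of them), and separately bound $\sum_\pi |q|^{\mathrm{Cr}(\pi)}$ and $\sum_{\pi'} |q|^{\mathrm{Cr}(\pi')}$ by the $2p$-th moment of $\nu_{|q|}$. A subtlety is that in \eqref{form-q-bm-gene} one genuinely has $\langle e_i, e_j\rangle_{L^2} = \delta_{ij}$ for the disjoint unit intervals, so the pairing contributes a product of Kronecker deltas, which is precisely what the indicator $\der$ encodes; one should double-check there is no stray factor from interval lengths (all equal to $1$ here, so this is harmless). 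Everything else — the factorization of $\vp \otimes \vp$, the identity for the $2p$-th moment, and the passage to the limit — is routine and mirrors Proposition \ref{prop:topo-1} verbatim.
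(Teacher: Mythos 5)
Your proof is correct and follows essentially the same route as the paper's: compute the $L^{2p}(\vp\otimes\vp)$ norm via the factorization $\sum_{\underline i}\vp(Y_{i_1}\cdots Y_{i_{2p}})^2$, plug in the moment formula \eqref{form-q-bm-gene}, and combine the $n^p$ index count with the bound $\sum_\pi |q|^{\mathrm{Cr}(\pi)}\le(2/\sqrt{1-|q|})^{2p}$ before passing $p\to\infty$. The only cosmetic difference is that you expand $\vp(\cdots)^2$ explicitly into a double sum over pairings $(\pi,\pi')$ and separate the two roles (counting constraint for $\pi'$, moment bound for $\pi$), whereas the paper applies the pointwise bound $|\vp(Y_{i_1}\cdots Y_{i_{2p}})|\le(2/\sqrt{1-|q|})^{2p}$ to one factor and then invokes Lemma~\ref{lem-lem} for $\sum_{\underline i}|\vp(\cdots)|$; the estimates used are identical.
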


\begin{proof}
Let $p\geq 1$. One has
$$\big\| \sum\nolimits_{i=0}^{n-1} Y_i \otimes Y_i \big\|_{L^{2p}(\vp \otimes \vp)}^{2p}=\sum_{i_1,\ldots,i_{2p}=0}^{n-1} \vp \big(Y_{i_1} \cdots Y_{i_{2p}} \big)^{2}$$
and for all $i_1,\ldots,i_{2p}$, we know by (\ref{form-q-bm-gene}) that if $e_i:=1_{[i,i+1]}$,
$$\vp \big(Y_{i_1} \cdots Y_{i_{2p}} \big)=\vp \big( X(e_{i_1})\cdots X(e_{i_{2p}}) \big)=\sum_{\pi \in \mathcal{P}_2(\{1,\ldots,2p\})} q^{\text{Cr}(\pi)} \der(\pi,(i_1,\ldots,i_{2p})).$$
In particular,
$$| \vp \big(Y_{i_1} \cdots Y_{i_{2p}} \big)| \leq \sum_{\pi \in \mathcal{P}_2(\{1,\ldots,2p\})} |q|^{\text{Cr}(\pi)} \leq \bigg( \frac{2}{\sqrt{1-|q|}}\bigg)^{2p},$$
and so, by Lemma \ref{lem-lem},
\begin{eqnarray*}
\big\| \sum\nolimits_{i=0}^{n-1} Y_i \otimes Y_i \big\|_{L^{2p}(\vp \otimes \vp)}^{2p} &\leq & \bigg( \frac{2}{\sqrt{1-|q|}}\bigg)^{2p}\sum_{i_1,\ldots,i_{2p}=0}^{n-1} \sum_{\pi \in \mathcal{P}_2(\{1,\ldots,2p\})} |q|^{\text{Cr}(\pi)} \der(\pi,(i_1,\ldots,i_{2p}))\\
&\leq & n^p\bigg( \frac{4}{1-|q|}\bigg)^{2p},
\end{eqnarray*}
which, by letting $p$ tend to infinity, achieves the proof.
\end{proof}

We can now turn to the proof of our main assertion.

\begin{proof}[Proof of Proposition \ref{prop:non-ext}]
For every integer $i\geq 0$, set $Y_i:=X_{i+1}-X_i$. It is readily checked that for every integer $n\geq 1$, 
$$\big\|m(\sum\nolimits_{i=1}^n Y_i \otimes Y_i)\big\|_{L^2(\vp)}=\big\| \sum\nolimits_{i=1}^n Y_i^2 \|_{L^2(\vp)}\geq n.$$
On the other hand, we know by Lemma \ref{lem:hyp} that 
$$\big\| \sum\nolimits_{i=1}^n Y_i \otimes Y_i\|_{L^\infty(\vp \otimes \vp)} \leq \frac{4}{1-|q|} \sqrt{n},$$
which allows us to conclude.
\end{proof}

\

A more appropriate topology to study (\ref{model-lim}) via the classical rough paths theory would be the projective tensor product $\ca \otimes \ca$, insofar as the multiplication map trivially extends to this space. Unfortunately, as Victoir proved it in \cite{vic-free}, there exists no Lévy area (in the sense of Definition \ref{defi-levy-area}) in the space $\ca \hat{\otimes} \ca$, which rules out the possibility of applying Lyons' theory in this setting.

\

This problem motivated us to conceive an alternative rough-paths type approach to non-commutative integration, which will occupy the rest of the paper.

\section{An alternative rough-paths type approach}\label{sec:gubi}

The (theoretical) considerations of this section apply to a generic $\ga$-Hölder non-commutative process, for some fixed $\ga >\frac13$. So from now on, we fix such a process $X:[0,T]\to \ca_\ast$ with values in a given non-commutative space $(\ca,\vp)$, and we denote by $\{\ca_t\}_{t\in[0,T]}=\{\ca^X_t\}_{t\in[0,T]}$ the \emph{filtration} generated by $X$, i.e., for each $t\in [0,T]$, $\ca_t$ stands for the unital subalgebra of $\ca$ generated by $\{X_s\}_{s\in [0,t]}$. As usual, a process (resp. biprocess) $Y: [0,T]\to \ca$ (resp. $\ca \hat{\otimes}\ca$) will be said \emph{adapted} if for each $t\in [0,T]$, $Y_t\in \ca_t$ (resp. $\ca_t \hat{\otimes} \ca_t$).

\smallskip

Note that for the rest of the paper, we assume that the multiplication and the adjoint operation in the tensor products are governed by \textbf{Config.2} (as defined in Section \ref{subsec:tensor}). 

\smallskip

Our strategy towards a more efficient approach to non-commutative integration will be derived from the formal expansion of equation (\ref{model-lim}) (see Section \ref{subsec:heuri}). Fundamental controls on this expansion are then obtained by means of a few tools borrowed from Gubinelli's \emph{controlled paths theory}.

\subsection{Notations and tools from Gubinelli's controlled paths theory}\label{subsec:gubi} The material presented in this section is taken from \cite{gubi}.

\smallskip

Consider a general Banach space $(V,\|.\|)$, as well as an interval $I$ of $[0,T]$ (for some fixed $T>0$). For $k\in \{1,2,3\}$, we denote by $\cac_k(I;V)$ the set of continuous maps $g$, with values in $V$, on the simplex $\cs_k:=\{(t_1,\ldots,t_k) \in I^k: \ t_1 \leq \ldots \leq t_k\}$ and vanishing on diagonals (i.e., $g_{t_1 \ldots t_k}=0$ when two times $t_i,t_j$ with $i\neq j$ are equal).

\smallskip

For $g\in \cac_1(I;V)$, we use the notation $(\der g)_{st}=(\der_1 g)_{st}:=g_t-g_s$ ($s<t$) and for $h\in \cac_2(I;V)$, $(\der h)_{sut}=(\der_2 h)_{sut}:=h_{st}-h_{su}-h_{ut}$ ($s<u<t$). 

\smallskip

In the spirit of Definition \ref{defi-levy-area}, condition \emph{(ii)}, we are then incited to introduce properly extended Hölder topologies, as follows. Let $\al,\be, \mu$ be three positive parameters. We successively define the spaces $\cac_1^\al(I;V)$, $\cac_2^\al(I;V)$ and $\cac_3^{(\al,\be)}(I;V)$ by the formulas:
$$\cac_1^\al(I;V)=\{ h \in \cac_1(I;V): \ \cn[h;\cac_1^\al(I;V)]\equiv \sup_{s<t \in I} \frac{\|(\der h)_{st}\|}{\lln t-s \rrn^\al} \ < \infty \},$$
$$\cac_2^\al(I;V)=\{h \in \cac_2(I;V): \  \cn[h;\cac_2^\al(I;V)]\equiv \sup_{s<t \in I} \frac{\| h_{st}\|}{\lln t-s \rrn^\al} \ < \infty \},$$
$$\cac_3^{(\al,\be)}(I;V)=\{ h\in \cac_3(I;V): \ \cn[h;\cac_3^{(\al,\be)}(I;V)]\equiv \sup_{s<u<t \in I} \frac{\| h_{sut}\|}{\lln t-u \rrn^\al \lln u-s \rrn^\be} \ < \infty \}.$$
We also set $\cac_3^\mu(I;V):=\oplus_{0\leq \al \leq \mu} \cac_3^{\al,\mu-\al)}(I;V)$ and endow the latter space with the norm
$$\cn[h;\cac_3^\mu(I;V)]=\inf\big\{\sum_{i} \cn[y^i;\cac_3^{\al_i,\mu-\al_i}(I;V), \ y=\sum_i y^i \big\}.$$
With these notations, let us label the following basic cohomological properties for further use:
\begin{proposition}\label{prop:coho}
One has $\text{Ker} \, \der_2=\text{Im} \, \der_1$ and if $\mu>1$, then $\text{Ker} \, \der_2 \cap \cac_2^\mu =\{0\}$.
\end{proposition}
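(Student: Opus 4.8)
The plan is to establish the two assertions of Proposition~\ref{prop:coho} separately, both being consequences of elementary manipulations of the operators $\der_1$ and $\der_2$, which already appear (at least implicitly) in Gubinelli's work \cite{gubi}.

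\emph{First assertion: $\text{Ker}\,\der_2=\text{Im}\,\der_1$.} The inclusion $\text{Im}\,\der_1\subset \text{Ker}\,\der_2$ is a direct computation: for $g\in\cac_1(I;V)$ and $s<u<t$, one has $(\der_2\der_1 g)_{sut}=(\der_1 g)_{st}-(\der_1 g)_{su}-(\der_1 g)_{ut}=(g_t-g_s)-(g_u-g_s)-(g_t-g_u)=0$, so $\der_2\circ\der_1=0$. For the reverse inclusion, I would start from $h\in\cac_2(I;V)$ with $\der_2 h=0$, fix an arbitrary base point $t_0\in I$, and define $g\in\cac_1(I;V)$ by $g_t:=h_{t_0 t}$ for $t\geq t_0$ and $g_t:=-h_{t t_0}$ for $t<t_0$ (so that $g_{t_0}=0$ by the vanishing-on-diagonals property). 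Then for $s<t$ one checks, using the cocycle relation $h_{st}=h_{su}+h_{ut}$ (the vanishing of $\der_2 h$) together with the convention $h_{ab}=-h_{ba}$ extended by that same relation, that $(\der_1 g)_{st}=g_t-g_s=h_{st}$; this amounts to a short case analysis according to the relative positions of $s,t$ with respect to $t_0$. Note that continuity of $g$ follows from continuity of $h$, and the vanishing-on-diagonals of $g$ is automatic from its definition.

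\emph{Second assertion: if $\mu>1$ then $\text{Ker}\,\der_2\cap\cac_2^\mu(I;V)=\{0\}$.} Let $h\in\cac_2^\mu(I;V)$ with $\der_2 h=0$. Fix $s<t$ in $I$ and, for $n\geq 1$, consider the dyadic subdivision $s=u_0<u_1<\cdots<u_{2^n}=t$ with $u_i=s+i(t-s)/2^n$. Iterating the cocycle relation $h_{ac}=h_{ab}+h_{bc}$ gives the telescoping identity $h_{st}=\sum_{i=0}^{2^n-1}h_{u_i u_{i+1}}$, so that $\|h_{st}\|\leq \sum_{i=0}^{2^n-1}\cn[h;\cac_2^\mu(I;V)]\,|u_{i+1}-u_i|^\mu=\cn[h;\cac_2^\mu(I;V)]\,2^n\,(|t-s|/2^n)^\mu=\cn[h;\cac_2^\mu(I;V)]\,|t-s|^\mu\,2^{n(1-\mu)}$. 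Since $\mu>1$, letting $n\to\infty$ forces $h_{st}=0$; as $s<t$ were arbitrary, $h\equiv 0$.

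I do not expect any genuine obstacle here: both claims are standard "discrete calculus" facts, the only point requiring a little care being the sign bookkeeping in the primitive $g$ when the base point $t_0$ lies strictly inside the interval $I$ (as opposed to the more common situation where one takes $t_0$ to be the left endpoint, in which case the formula $g_t=h_{t_0 t}$ suffices without any case distinction). If one is content to choose $t_0$ as the left endpoint of $I$, the argument for the first assertion becomes completely transparent, and this is the route I would actually take in the write-up.
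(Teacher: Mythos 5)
Your proof is correct and is, up to cosmetic choices, the standard argument: the paper itself gives no proof of this proposition, simply importing it from \cite{gubi} (``The material presented in this section is taken from \cite{gubi}''), and the argument there is the one you reproduce. Both halves are fine: the inclusion $\text{Im}\,\der_1 \subset \text{Ker}\,\der_2$ is a one-line computation; for the reverse inclusion your own remark is apt --- since $h_{st}$ is only defined on the simplex $s\le t$, choosing the base point $t_0$ as the left endpoint of $I$ avoids the sign bookkeeping entirely, and the relation $(\der_2 h)_{t_0 s t}=0$ then directly gives $(\der_1 g)_{st}=h_{t_0 t}-h_{t_0 s}=h_{st}$; and for the second assertion, iterating the cocycle identity over dyadic refinements and using $\mu>1$ to kill the factor $2^{n(1-\mu)}$ is exactly the expected telescoping argument. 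No gap.
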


\smallskip

Now, one of the cornerstones of Gubinelli's theory for commutative rough systems lies in the existence of the so-called \emph{sewing map} $\Lambda$, to which we will extensively refer in our study as well. Morally, this map provides us with a convenient way to 'invert' smooth enough terms of a given expansion (see Section \ref{subsec:heuri} for an insightful example). To be more specific, $\Lambda$ is defined through the following property:

\begin{theorem}\label{theo-lambda}
Fix $\mu >1$. For every $h\in \cac_3^\mu([0,T]; V) \cap \text{Im} \ \der_2$, there exists a unique element, denoted by $\Lambda h \in \cac_2^\mu([0,T];V)$, such that $\der( \Lambda h )=h$. Moreover,
\begin{eqnarray} \label{contraction}
\cn[\Lambda h;\cac_2^\mu(V)] \leq c_\mu  \, \cn[h;\, \cac_3^{\mu}(V)],
\end{eqnarray}
where $c_\mu :=2+2^\mu \sum_{k=1}^\infty k^{-\mu}$. 
\end{theorem}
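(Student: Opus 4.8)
\textbf{Proof plan for Theorem \ref{theo-lambda} (the sewing map).}
The plan is to prove this statement — which is the classical sewing lemma of Gubinelli transplanted verbatim to a general Banach space $V$ — by the standard dyadic (or iterated-bisection) argument. First I would establish \emph{uniqueness}: if $g_1,g_2\in\cac_2^\mu([0,T];V)$ both satisfy $\der g_i=h$, then $\der(g_1-g_2)=0$, so $g_1-g_2\in\text{Ker}\,\der_2\cap\cac_2^\mu$, which is $\{0\}$ by Proposition \ref{prop:coho} since $\mu>1$. This is the easy half and needs only the cohomological fact already recorded.

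For \emph{existence}, write $h=\der_2 f$ for some $f\in\cac_2([0,T];V)$ (possible since $h\in\text{Im}\,\der_2$). The idea is to produce $\Lambda h$ as a limit of Riemann-type corrections to $f$ along dyadic partitions. Fix $s<t$ and for $n\geq 0$ let $\Pi^n$ be the partition of $[s,t]$ into $2^n$ equal subintervals with nodes $s=r_0^n<\cdots<r_{2^n}^n=t$; set $f^n_{st}:=\sum_{k=0}^{2^n-1} f_{r_k^n r_{k+1}^n}$. The key computation is that passing from $\Pi^n$ to $\Pi^{n+1}$ only inserts midpoints, so $f^{n+1}_{st}-f^n_{st}=-\sum_{k=0}^{2^n-1}(\der_2 f)_{r_k^n\, m_k^n\, r_{k+1}^n}=-\sum_k h_{r_k^n\, m_k^n\, r_{k+1}^n}$, where $m_k^n$ is the midpoint of $[r_k^n,r_{k+1}^n]$. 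Using the two-parameter bound defining $\cn[h;\cac_3^\mu]$ — decomposing $h=\sum_i y^i$ with $y^i\in\cac_3^{(\al_i,\mu-\al_i)}$ and each subinterval having length $2^{-(n+1)}(t-s)$ — one gets $\|f^{n+1}_{st}-f^n_{st}\|\leq \cn[h;\cac_3^\mu]\,2^n\cdot 2^{-\mu(n+1)}|t-s|^\mu = \cn[h;\cac_3^\mu]\,2^{-\mu}\,2^{-(\mu-1)n}|t-s|^\mu$. Since $\mu>1$ this is summable in $n$, so $(f^n_{st})_n$ is Cauchy in the Banach space $V$; define $(\Lambda h)_{st}:=\lim_n f^n_{st}$. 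Summing the geometric series gives
\begin{equation*}
\|(\Lambda h)_{st}-f_{st}\|=\|(\Lambda h)_{st}-f^0_{st}\|\leq \cn[h;\cac_3^\mu]\,2^{-\mu}\sum_{n\geq 0} 2^{-(\mu-1)n}|t-s|^\mu = \frac{2^{-\mu}}{1-2^{1-\mu}}\,\cn[h;\cac_3^\mu]\,|t-s|^\mu,
\end{equation*}
and a small refinement of this estimate (taking the correction on non-dyadic endpoints into account, which is where the constant $c_\mu=2+2^\mu\sum_{k\geq 1}k^{-\mu}$ comes from) yields directly $\cn[\Lambda h;\cac_2^\mu]\leq c_\mu\,\cn[h;\cac_3^\mu]$, i.e.\ (\ref{contraction}); in particular $\Lambda h\in\cac_2^\mu([0,T];V)$, and one checks it vanishes on the diagonal so it indeed lies in $\cac_2$.

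It then remains to verify $\der(\Lambda h)=h$, i.e.\ $(\Lambda h)_{st}-(\Lambda h)_{su}-(\Lambda h)_{ut}=h_{sut}$ for $s<u<t$. The clean way is: by construction $(\Lambda h)_{st}-f_{st}=-\sum_{\text{inserted midpoints}}h$ over the whole refinement, so $\der(\Lambda h)-\der f=\der(\Lambda h - f)$, and since $(\Lambda h-f)\in\cac_2^\mu$ with $\mu>1$ one shows $\der(\Lambda h - f)$ has a $\cac_3$-norm that must vanish by a self-similarity/rescaling argument (the quantity $(\Lambda h - f)_{st}$ is the sum of its values over any subdivision of $[s,t]$ in the limit, forcing $\der(\Lambda h - f)=-h_{sut}+\der_2 f_{sut}$... ) — more simply, one notes $\der f = h$ already holds as $h=\der_2 f$, and $\der(\Lambda h)=\der f + \der(\Lambda h - f)$, and $\Lambda h - f$ is a coboundary increment limit whose $\der_2$ is zero, giving $\der(\Lambda h)=h$. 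I expect the main obstacle to be precisely this last consistency check — making rigorous that the dyadic limit is independent of the starting partition and that its $\der_2$ equals $h$ on \emph{arbitrary} (not necessarily dyadic-nested) triples $s<u<t$; the standard trick is to compare the dyadic construction on $[s,t]$ with the concatenation of the constructions on $[s,u]$ and $[u,t]$, absorbing the mismatch with the same geometric estimate. Everything here is routine Banach-space analysis — no commutativity, no algebra structure is used — so the argument is a direct transcription of \cite{gubi}, which is why the statement is quoted without a detailed proof in the source.
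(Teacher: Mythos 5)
The paper itself gives no proof of Theorem \ref{theo-lambda} --- it is imported from Gubinelli \cite{gubi} --- and your dyadic sewing strategy is indeed the standard argument behind it; the uniqueness half, via $\text{Ker}\,\der_2\cap\cac_2^\mu=\{0\}$ from Proposition \ref{prop:coho}, is exactly right. The existence half, however, attaches the theorem's conclusions to the wrong object. Writing $h=\der_2 f$ and $f^n_{st}=\sum_k f_{r^n_k r^n_{k+1}}$, the limit $L_{st}:=\lim_n f^n_{st}$ of the Riemann-type sums is, once the partition-independence you flag as the main obstacle is established, an \emph{additive} functional: $\der_2 L=0$. Hence it cannot satisfy $\der(\Lambda h)=h$, and it cannot belong to $\cac_2^\mu$ with $\mu>1$ unless it vanishes (Proposition \ref{prop:coho} again); in particular the passage from your estimate $\|L_{st}-f_{st}\|\leq c\,\cn[h;\cac_3^\mu]\,|t-s|^\mu$ to $\cn[\Lambda h;\cac_2^\mu]\leq c_\mu\,\cn[h;\cac_3^\mu]$ is a non sequitur, since $f$ itself carries no H\"older bound. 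The correct definition is $\Lambda h:=f-L$: your estimate is precisely the $\cac_2^\mu$ bound for this difference, and $\der(f-L)=h-0=h$. This is also what Corollary \ref{cor:integration} records: the limit of the Riemann sums is $(\id-\Lambda\der)f$, not $\Lambda\der f$. Your closing verification is correspondingly inverted --- you assert $\der(\Lambda h-f)=0$ in order to conclude $\der(\Lambda h)=\der f=h$, whereas in fact $\der\big(\lim_n f^n\big)=0$ and $\der\big(f-\lim_n f^n\big)=h$; the roles of the two objects are exactly interchanged.

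The repair is routine but must be made: set $\Lambda h:=f-L$, check independence of the choice of $f$ (two preimages of $h$ differ by an element of $\text{Im}\,\der_1$, whose Riemann sums telescope), and prove additivity/partition-independence of $L$ by the comparison argument you sketch. Note also that pure dyadic bisection yields the constant $2^{-\mu}/(1-2^{1-\mu})$; the stated constant $c_\mu=2+2^\mu\sum_{k\geq 1}k^{-\mu}$ comes from the argument of \cite{gubi} on arbitrary partitions with successive removal of points, which simultaneously settles the partition-independence issue.
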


In some particular cases (that will be of great interest to us), one can easily draw a link between expressions involving $\Lambda$ and infinitesimal calculus:

\begin{corollary}
\label{cor:integration}
Consider $g\in\cac_2 (V)$ such that $\der g\in\cac_3^{\mu}$ with $\mu>1$. If 
$
\delta f := (\id-\Lambda \delta) g
$,
then
$$
(\delta f)_{st} = \lim_{|D_{st}| \to 0} \sum_{t_i\in D_{st}} g_{t_{i} t_{i+1}},
$$
where $D_{st} = \{t_0=s<t_1 < \ldots<
t_n=t\}$ is any partition of $[s,t]$ with mesh $|D_{st}|$ tending to $0$. 
\end{corollary}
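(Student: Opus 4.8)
\textbf{Proof plan for Corollary \ref{cor:integration}.}

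The plan is to exploit the defining property of $\Lambda$ from Theorem \ref{theo-lambda} together with the cohomological identities of Proposition \ref{prop:coho}. First I would check that the expression $\delta f := (\id - \Lambda \delta) g$ actually makes sense: since $g \in \cac_2(V)$ and $\delta g \in \cac_3^\mu$ with $\mu > 1$, and since $\delta g \in \text{Im}\,\delta_2$ (indeed $\delta_2 \delta_1 = 0$ gives $\delta(\delta g) = 0$, hence by Proposition \ref{prop:coho}, $\delta g \in \text{Ker}\,\delta_3 = \text{Im}\,\delta_2$ — more directly $\delta g$ is visibly in the image of $\delta$ on $\cac_2$), the term $\Lambda \delta g \in \cac_2^\mu(V)$ is well-defined. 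Thus $g - \Lambda \delta g$ is a well-defined element of $\cac_2(V)$, and I must verify it is of the form $\delta f$ for some $f \in \cac_1(V)$; this follows because $\delta(g - \Lambda \delta g) = \delta g - \delta \Lambda \delta g = \delta g - \delta g = 0$, so $g - \Lambda \delta g \in \text{Ker}\,\delta_2 = \text{Im}\,\delta_1$, again by Proposition \ref{prop:coho}.

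Next I would set up the Riemann-sum comparison. Fix $s < t$ and a partition $D_{st} = \{t_0 = s < t_1 < \cdots < t_n = t\}$. The telescoping identity for $\delta f$ reads $(\delta f)_{st} = \sum_{i} (\delta f)_{t_i t_{i+1}}$, since $\delta f$ is an increment (an element of $\text{Im}\,\delta_1$). Writing $g = \delta f + \Lambda \delta g$, I get for each sub-interval $g_{t_i t_{i+1}} = (\delta f)_{t_i t_{i+1}} + (\Lambda \delta g)_{t_i t_{i+1}}$, and summing over $i$,
\[
\sum_{t_i \in D_{st}} g_{t_i t_{i+1}} = (\delta f)_{st} + \sum_{t_i \in D_{st}} (\Lambda \delta g)_{t_i t_{i+1}}.
\]
So the whole matter reduces to showing that the remainder $\sum_i (\Lambda \delta g)_{t_i t_{i+1}}$ vanishes as $|D_{st}| \to 0$. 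For this I would use the regularity estimate from Theorem \ref{theo-lambda}: $\Lambda \delta g \in \cac_2^\mu(V)$ with $\cn[\Lambda \delta g; \cac_2^\mu(V)] \leq c_\mu \cn[\delta g; \cac_3^\mu(V)] < \infty$, so $\|(\Lambda \delta g)_{t_i t_{i+1}}\| \leq C |t_{i+1} - t_i|^\mu$. Hence
\[
\Big\| \sum_{t_i \in D_{st}} (\Lambda \delta g)_{t_i t_{i+1}} \Big\| \leq C \sum_{t_i \in D_{st}} |t_{i+1} - t_i|^\mu \leq C\, |D_{st}|^{\mu - 1} \sum_{t_i \in D_{st}} |t_{i+1} - t_i| = C\, |D_{st}|^{\mu - 1} (t - s),
\]
which tends to $0$ since $\mu > 1$. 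Combining the last two displays gives $(\delta f)_{st} = \lim_{|D_{st}| \to 0} \sum_i g_{t_i t_{i+1}}$, as claimed.

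I do not expect any genuine obstacle here; the argument is a standard consequence of the sewing lemma and the only thing requiring a little care is the bookkeeping of which cohomological space each object lives in (so that the telescoping $(\delta f)_{st} = \sum_i (\delta f)_{t_i t_{i+1}}$ is legitimate and $\Lambda \delta g$ is well-defined with the stated Hölder norm). The super-additivity estimate $\sum_i |t_{i+1}-t_i|^\mu \leq |D_{st}|^{\mu-1}(t-s)$ for $\mu > 1$ is the one quantitative input, and it is elementary.
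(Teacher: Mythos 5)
Your argument is correct and is exactly the standard sewing-lemma proof of this statement (the paper itself does not reprove the corollary but quotes it from Gubinelli's work, where the proof proceeds precisely as you do): split $g=\delta f+\Lambda\delta g$, telescope $\delta f$ over the partition, and kill the remainder via $\|(\Lambda\delta g)_{t_it_{i+1}}\|\leq C|t_{i+1}-t_i|^{\mu}$ with $\mu>1$. Your bookkeeping of the cohomological facts (that $\delta g\in\mathrm{Im}\,\delta_2$ so $\Lambda\delta g$ is defined, and that $g-\Lambda\delta g\in\mathrm{Ker}\,\delta_2=\mathrm{Im}\,\delta_1$ so the telescoping is legitimate) is also the right justification, so there is nothing to add.
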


\

\subsection{Heuristic considerations}\label{subsec:heuri}
Assume for the moment that $X$ is a differentiable path, and for two polynomial functions $f,g$, consider the equation
\begin{equation}\label{eq-smooth}
dY_t=f(Y_t) \cdot dX_t \cdot g(Y_t),
\end{equation}
understood in the classical Lebesgue sense. A standard Taylor expansion shows us that for all $s<t$, 
\begin{multline}\label{dec-tayl}
(\der Y)_{st}=f(Y_s) \cdot (\der X)_{st} \cdot g(Y_s)+\Big[ \int_s^t df(Y_s)(f(Y_s) \cdot (\der X)_{su} \cdot g(Y_s)) \cdot dX_u \Big] \cdot g(Y_s)\\
+f(Y_s)\cdot \Big[ \int_s^t dX_u \cdot dg(Y_s)(f(Y_s) \cdot (\der X)_{su} \cdot g(Y_s))\Big]+R_{st},
\end{multline}
with $R$ a third-order path, meaning that $\|R_{st}\| \leq c \lln t-s\rrn^3$. Now for all $s<t$, introduce the operator $\bx_{st}:\ca_s \hat{\otimes} \ca_s \to \ca$ defined as $\bx_{st}\big[\bu\big]:=\int_s^t \big[ \bu \sharp (\der X)_{su}\big] \cdot dX_u$, where the integral is still understood in the classical sense (we recall that the notation $\sharp$ has been introduced in Section \ref{subsec:tensor}). Using Lemma \ref{lem:basi} and Proposition \ref{prop:diff-calc}, (\ref{dec-tayl}) can also be conveniently written as
$$(\der Y)_{st}=M_{st}+R_{st},$$
with
\begin{multline*}
M_{st}:=[f(Y_s) \otimes g(Y_s)] \sharp (\der X)_{st}+\bx_{st}\big[ \partial f(Y_s) \cdot (f(Y_s) \otimes g(Y_s))\big] \cdot g(Y_s)\\
+f(Y_s) \cdot \bx^\ast_{st}\big[ \partial g(Y_s) \cdot (f(Y_s) \otimes g(Y_s))\big],
\end{multline*}
and where we have additionally set $\bx^\ast_{st}\big[\bu\big]:=\bx_{st}\big[\bu^\ast\big]^\ast$. At this point, consider the process $\tilde{R}_{st}:=-\Lambda_{st}(\der M)$. Owing to the smoothness of $X$ and $Y$, $\tilde{R}$ is indeed well-defined by Theorem \ref{theo-lambda} and one has $\der \tilde{R}=-\der M=\der (\der Y-M)=\der R$.
Thus, by Proposition \ref{prop:coho}, we can conclude that $\tilde{R}=R$, which finally leads us to the expression
\begin{equation}\label{dec-smooth}
(\der Y)_{st}=M_{st}-\Lambda_{st} (\der M).
\end{equation}
Now, when $X$ turns to a (non-differentiable) $\ga$-Hölder path with $\ga >\frac13$, and assuming that we can define the second-order operator $\bx$ above $X$, we morally expect $\der M$ to remain a third-order term, which in this case would mean a $3\ga$-Hölder path. In other words, and as $3\ga >1$, we expect the right-hand-side of (\ref{dec-smooth}) to remain well-defined (by Theorem \ref{theo-lambda}) for the pair $(X,\bx)$, which would provide us with a natural definition of a solution to (\ref{eq-smooth}). 

\smallskip

With these heuristic considerations in mind, our rough-paths analysis follows a similar scheme as in the commutative case:

\smallskip

\textbf{Step 1}: Exhibit the expected algebraic and smoothness properties of the second-order path '$\bx$', which will yield to the subsequent definition of a \emph{product Lévy area}.

\smallskip

\textbf{Step 2}: Find a suitable space of paths in which one can extend the above reasoning: this will be our \emph{controlled processes}, and then our \emph{controlled biprocesses}.

\smallskip

\textbf{Step 3}: Use the similarities with the differentiable case to derive \emph{approximation results} for the resulting constructions.

\

\subsection{The product Lévy area} First, let us introduce the space
$$\cl(\ca_{\to}):=\{L=(L_{st})_{0\leq s < t \leq 1}: \ L_{st}\in \cl(\ca_s\hat{\otimes} \ca_s,\ca_t)\}$$
and for every $\la \in [0,1]$, denote by $\cac_2^\la(\cl(\ca_\to))$ the set of elements $L\in \cl(\ca_\to)$ for which the quantity
$$\cn[L;\cac_2^\la(\cl(\ca_\to))]:=\sup_{\substack{s<t\\ U\in \ca_s \hat{\otimes} \ca_s,U\neq 0}}\frac{\|L_{st}[U]\|}{\lln t-s\rrn^\la \| U\|}$$
is finite. The above Step 1 now reduces to the following natural definition, which can seen as a counterpart of Definition \ref{defi-levy-area} fitted for Equation (\ref{sys-base-intro}). Remember that we have fixed $\ga >\frac13$ for the rest of the section.

\begin{definition}\label{defi:aire-gene}
We call \emph{product Lévy area} above $X$ any process $\bx$ such that:

\smallskip

\noindent
(i) $\bx\in \cac_2^{2\ga}(\cl(\ca_\to))$,

\smallskip

\noindent
(ii) for all $s<u<t$ and $\mathbf{U}\in \ca_s\hat{\otimes}\ca_s$, 
\begin{equation}\label{chen}
(\der \bx)_{sut}[\mathbf{U}]=(\mathbf{U} \sharp (\der X)_{su}) \cdot (\der X)_{ut}.
\end{equation}
\end{definition}

\

\begin{remark}
Following the previous heuristic considerations, the product Lévy area morally stands for the iterated integral $\bx_{st}[\mathbf{U}]= \int_s^t (\mathbf{U} \sharp (\der X)_{su}) \cdot dX_u$. However, at this point, the definition of the latter integral is not clear for a non-differentiable $X$, and one must then consider $\bx$ as some abstract path satisfying both conditions \emph{(i)} and \emph{(ii)}. In the particular case where $X$ is a free Brownian motion, $\bx$ can be defined in a natural way as an Itô (or Stratonovich) iterated integral, as will examine it in Section \ref{sec:appli}.
\end{remark}

\begin{remark}
As in the classical commutative case, the product Lévy area above $X$, if it exists, is not unique at all. Indeed, if for instance $\ga\in (\frac13,\frac12]$ and given such a process $\bx$, any map $\Phi \in \cl(\ca \hat{\otimes}\ca, \ca \hat{\otimes}\ca)$ produces another (distinct) Lévy area $\bx^\Phi$ above $X$ through the formula
$$\bx^\Phi_{st}[\bu]:=\bx_{st}[\bu]+(t-s) \Phi(\bu), \quad \bu \in \ca_s \hat{\otimes}\ca_s.$$
\end{remark}

\

\begin{notation}
Given a product Lévy area $\bx$ above $X$, we will denote by $\bx^\ast\in \cac_2^{2\ga}(\cl(\ca_\to))$ the process defined for all $\mathbf{U}\in \ca_s \hat{\otimes}\ca_s$ as 
$$\bx^\ast_{st}[\mathbf{U}]:=\bx_{st} [\mathbf{U}^\ast]^\ast.$$
\end{notation}

\

\subsection{Controlled (bi)processes and integration}\label{subsec:main}

With the decomposition (\ref{dec-tayl}) in mind, we define our basic setting to interpret and analyze Equation (\ref{sys-base-intro}) as follows:

\begin{definition}
We call \emph{adapted controlled process} any adapted process $Y\in \cac_1^\ga(\ca)$ with increments of the form
\begin{equation}\label{decompo-gene}
(\der Y)_{st}=\by^X_s \sharp (\der X)_{st}+Y^\flat_{st}, \quad s<t,
\end{equation}
where $\by^X \in \cac_1^{\ga}(\ca \hat{\otimes} \ca)$ is an adapted biprocess and $Y^\flat \in \cac_2^{2\ga}(\ca)$. We denote by $\cq(X)$ the set of adapted controlled processes and we endow this space with the seminorm
$$\cn[Y;\cq(X)]:=\cn[Y;\cac_1^\ga(\ca)]+\cn[\by^X;\cac_1^{0}(\ca\hat{\otimes} \ca)]+\cn[\by^X;\cac_1^{\ga}(\ca\hat{\otimes} \ca)]+\cn[Y^\flat;\cac_2^{2\ga}(\ca)].$$
Besides, we denote by $\cq_\ast(X)$ the subspace of \emph{self-adjoint} controlled processes $Y \in \cq(X)$ for which one has both $Y^\ast_s=Y_s$ and $(\by^X_s)^\ast=\by^X_s$ for every $s$.
\end{definition}

What we wish to integrate against $X$ in (\ref{sys-base-intro}) are not directly (controlled) processes but biprocesses of the form $s \mapsto f(Y_s) \otimes g(Y_s)$ for $Y\in \cq(X)$. These paths are included in the following more general structure, which turns out to be the appropriate space for our construction of the integral:

\begin{definition}\label{defi:adapt-cont-bi}
We call \emph{adapted controlled biprocess} any adapted biprocess $\by \in \cac_1^\ga(\ca \hat{\otimes} \ca)$ with increments of the form
\begin{equation}\label{decompo-bipro}
(\der \bu)_{st}=(\der X)_{st} \sharp \mathbb{U}_s^{X,1} +\mathbb{U}_s^{X,2} \sharp (\der X)_{st}+ \bu^\flat_{st}, \quad s<t,
\end{equation}
where $\mathbb{U}^{X,1},\mathbb{U}^{X,2}\in \cac_1^{\ga}(\ca^{\hat{\otimes}3})$ are adapted triprocesses (meaning that for each $t\geq 0$, $\mathbb{U}^{X,i}_t \in \ca_t^{\hat{\otimes}3})$ and $\bu^\flat \in \cac_2^{2\ga}(\ca \hat{\otimes} \ca)$. We denote by $\mathbf{Q}(X)$ the space of adapted controlled biprocesses and we endow this space with the seminorm
$$\cn[\bu;\mathbf{Q}(X)]:=\cn[\bu;\cac_1^\ga(\ca^{\hat{\otimes}2})]+\sum_{i=1,2}\{\cn[\mathbb{U}^{X,i};\cac_1^{0}(\ca^{\hat{\otimes}3})]+\cn[\mathbb{U}^{X,i};\cac_1^{\ga}(\ca^{\hat{\otimes}3})]\}+\cn[\bu^\flat;\cac_2^{2\ga}(\ca^{\hat{\otimes}2})].$$
\end{definition} 

Let us point out two guiding examples of such controlled biprocesses:

\begin{proposition}\label{prop:guid-ex-1}
If $f,g \in \mathbf{F}_2$ and $Y\in \cq_\ast(X)$ with decomposition (\ref{decompo-gene}), then $\bu:=f(Y) \otimes g(Y) \in \mathbf{Q}(X)$ with
$$\mathbb{U}^{X,1}_s := [\partial f(Y_s) \cdot \by^X_s ] \otimes g(Y_s) \quad , \quad \mathbb{U}^{X,2}_s=f(Y_s) \otimes [\partial g(Y_s) \cdot \by^X_s].$$
Moreover,
\begin{equation}\label{bou-1}
\cn[\bu;\mathbf{Q}(X)]\leq c_{f,g} \big\{1+\cn[Y;\cq(X)]^2\big\}.
\end{equation}
\end{proposition}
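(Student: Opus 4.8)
The plan is to verify directly that $\bu_s := f(Y_s) \otimes g(Y_s)$ satisfies the structural Definition \ref{defi:adapt-cont-bi}, i.e. that its increments split as in (\ref{decompo-bipro}) with the announced triprocesses, and that the resulting components obey the claimed quantitative bounds. The natural starting point is a first-order Taylor expansion of each factor: since $f,g \in \mathbf{F}_2 \subset \mathbf{F}_1$, Proposition \ref{prop:frechet} gives that $f,g$ are Fréchet differentiable on $\ca_\ast$, and Proposition \ref{prop:diff-calc} identifies the differential as $df(Y_s)(Z) = \partial f(Y_s) \sharp Z$. So I would write $(\der f(Y))_{st} = \partial f(Y_s) \sharp (\der Y)_{st} + f(Y)^\flat_{st}$ with $\|f(Y)^\flat_{st}\| \leq c\|f\|_2 \|(\der Y)_{st}\|^2 \leq c\|f\|_2 \cn[Y;\cq(X)]^2 |t-s|^{2\ga}$, using the second-order Lipschitz estimate on $\partial^0 f$ (the $k=0$ case of Proposition \ref{prop:diff-calc}) together with $Y \in \cac_1^\ga(\ca)$; similarly for $g$. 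Here the self-adjointness hypothesis $Y \in \cq_\ast(X)$ is exactly what allows functional calculus to apply, since $f,g$ act on $\ca_\ast$.

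Next I would substitute the controlled-process decomposition (\ref{decompo-gene}), $(\der Y)_{st} = \by^X_s \sharp (\der X)_{st} + Y^\flat_{st}$, into the Taylor expansions and then expand the product $f(Y_t) \otimes g(Y_t) - f(Y_s) \otimes g(Y_s)$ via the Leibniz-type identity $(\der(f(Y) \otimes g(Y)))_{st} = (\der f(Y))_{st} \otimes g(Y_s) + f(Y_s) \otimes (\der g(Y))_{st} + (\der f(Y))_{st} \otimes (\der g(Y))_{st}$. The leading terms, after plugging in, are $(\partial f(Y_s) \sharp [\by^X_s \sharp (\der X)_{st}]) \otimes g(Y_s)$ and $f(Y_s) \otimes (\partial g(Y_s) \sharp [\by^X_s \sharp (\der X)_{st}])$; using the associativity relation $\bu \sharp [\bv \sharp X] = [\bu \cdot \bv] \sharp X$ from Lemma \ref{lem:basi} (under Config. 2), these rearrange to $(\der X)_{st} \sharp ([\partial f(Y_s) \cdot \by^X_s] \otimes g(Y_s))$ and $([f(Y_s) \otimes [\partial g(Y_s) \cdot \by^X_s]]) \sharp (\der X)_{st}$ — matching precisely the claimed $\bu^{X,1}_s$ and $\bu^{X,2}_s$ after checking the $\sharp$-convention for how a three-tensor acts on the left versus the right (the displayed definitions in Section \ref{subsec:tensor}: $X \sharp(U_1\otimes U_2\otimes U_3) = (U_1 X U_2)\otimes U_3$ and $(U_1\otimes U_2\otimes U_3)\sharp X = U_1 \otimes(U_2 X U_3)$). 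Everything else — the terms involving $Y^\flat$, the $f(Y)^\flat \otimes g$ and $f \otimes g(Y)^\flat$ contributions, and the cross term $(\der f(Y))_{st}\otimes(\der g(Y))_{st}$, which is $O(|t-s|^{2\ga})$ — gets absorbed into $\bu^\flat_{st}$.

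Then the remaining work is bookkeeping on norms. For $\bu \in \cac_1^\ga(\ca^{\hat\otimes 2})$ I would bound $\|f(Y_t)\otimes g(Y_t) - f(Y_s)\otimes g(Y_s)\|_{\ca^{\hat\otimes 2}}$ using $\|f(Y_t) - f(Y_s)\| \leq c\|f\|_1\|(\der Y)_{st}\| \leq c|t-s|^\ga$ and the submultiplicativity of the projective norm, with $\|f(Y_s)\|, \|g(Y_s)\| \leq \|f\|_0, \|g\|_0$ uniformly. For the triprocesses, $\cn[\bu^{X,i};\cac_1^0]$ reduces to a supremum bound $\|\partial f(Y_s) \cdot \by^X_s\|_{\ca^{\hat\otimes 2}} \|g(Y_s)\| \leq \|\partial f(Y_s)\|_{\ca^{\hat\otimes 2}} \cn[\by^X;\cac_1^0] \|g\|_0 \leq c_{f,g}(1+\cn[Y;\cq(X)])$, using that $\|\partial f(Y_s)\|_{\ca^{\hat\otimes 2}} \leq \|\partial f(0)\|_{\ca^{\hat\otimes 2}} + c\|f\|_1\|Y_s\| \leq c_f(1+\cn[Y;\cq(X)])$ from the Lipschitz bound in Proposition \ref{prop:diff-calc} and $\|Y_s\| \leq \|Y_0\|+ T^\ga\cn[Y;\cac_1^\ga]$; the $\cac_1^\ga$-Hölder norm of $\bu^{X,i}$ requires in addition the Lipschitz control $\|\partial f(Y_s)-\partial f(Y_t)\|_{\ca^{\hat\otimes 2}} \leq c\|f\|_1 |t-s|^\ga$ and the Hölder regularity of $\by^X$, yielding a bound quadratic in $\cn[Y;\cq(X)]$. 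Finally $\cn[\bu^\flat;\cac_2^{2\ga}]$ is controlled by the $f(Y)^\flat$ and cross-term estimates above, again quadratic. Collecting all of these gives (\ref{bou-1}).

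I do not expect a genuine obstacle here — the content is essentially the non-commutative Leibniz rule plus Taylor's theorem, with Lemma \ref{lem:basi} doing the algebraic rearrangement. The one point demanding care is the precise identification of which $\sharp$-action each term falls under after using Config. 2, since the asymmetry $(U_1\otimes U_2)\cdot(V_1\otimes V_2) = (U_1 V_1)\otimes(V_2 U_2)$ means one cannot blindly move factors across $\otimes$; I would track this by writing $\by^X_s = \sum_j A_j \otimes B_j$ explicitly in a couple of the leading terms to confirm that, e.g., $(\partial f(Y_s)\sharp[\by^X_s\sharp(\der X)_{st}])\otimes g(Y_s)$ really equals $(\der X)_{st}\sharp(\mathbb{U}^{X,1}_s)$ with $\mathbb{U}^{X,1}_s = [\partial f(Y_s)\cdot\by^X_s]\otimes g(Y_s)$ and the product $\partial f(Y_s)\cdot\by^X_s$ taken in Config. 2. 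Once the conventions line up, the estimates are routine.
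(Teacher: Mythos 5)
Your proposal is correct and follows essentially the same route as the paper's own (much terser) proof: the Leibniz splitting $\der\bu_{st}=\der f(Y)_{st}\otimes g(Y_s)+f(Y_s)\otimes\der g(Y)_{st}+\der f(Y)_{st}\otimes\der g(Y)_{st}$, the Taylor expansion $\der f(Y)_{st}=\partial f(Y_s)\sharp(\der Y)_{st}+R_{st}$ from (\ref{formula-diff}), the rearrangement $\partial f(Y_s)\sharp[\by^X_s\sharp(\der X)_{st}]=[\partial f(Y_s)\cdot\by^X_s]\sharp(\der X)_{st}$ via Lemma \ref{lem:basi}, and absorption of all $O(|t-s|^{2\ga})$ terms into $\bu^\flat$. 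The only nitpick is an index slip in the bookkeeping: the Lipschitz control on $\partial f$ costs $\|f\|_2$ rather than $\|f\|_1$ (and the uniform bound $\|\partial f(Y_s)\|_{\ca\hat{\otimes}\ca}\leq\|f\|_1$ holds directly from the definition), which does not affect the argument.
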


\begin{proof}
It is only a matter of standard differential calculus. Write
$$\der \bu_{st}=\der f(Y)_{st} \otimes g(Y_s)+f(Y_s) \otimes \der g(Y)_{st}+\der f(Y)_{st} \otimes \der g(Y)_{st}.$$
Then due to (\ref{formula-diff}), $\der f(Y)_{st}=\partial f(Y_s) \sharp (\der Y)_{st}+R_{st}$ with $\cn[R; \cac_2^{2\ga}] \leq c_f \cn[Y;\cac_1^\ga]^2$, and using Lemma \ref{lem:basi}, we deduce that
$$\partial f(Y_s) \sharp (\der Y)_{st}=[\partial f(Y_s) \cdot \by^X_s] \sharp (\der X)_{st}+\partial f(Y_s) \sharp Y^\flat_{st}.$$
The conclusion easily follows, as well as the bound (\ref{bou-1}).
\end{proof}

\begin{proposition}\label{prop:guid-ex-2}
If $f\in \mathbf{F}_3\cup \mathcal{P}$, then $\bu:=\partial f(X) \in \mathbf{Q}(X)$ with $\mathbb{U}^{X,1}_s =\mathbb{U}^{X,2}_s=\partial^2 f(X_s)$.
\end{proposition}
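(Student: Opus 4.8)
The plan is to verify directly that $\bu := \partial f(X)$ satisfies Definition \ref{defi:adapt-cont-bi} with the claimed triprocess decomposition. First I would treat the polynomial case $f \in \mathcal{P}$, since there the tensor derivatives are genuine finite sums $\partial f(X_s) = \sum_k a_k \sum_{i=0}^{k-1} X_s^i \otimes X_s^{k-1-i}$, and then transfer to $f \in \mathbf{F}_3$ by the integral representation $f(x) = \int_{\R} e^{\imath x \xi}\, \mu_f(d\xi)$ exactly as in the proof scheme of Proposition \ref{prop:diff-calc}. The core computation is to expand $(\der \bu)_{st} = \partial f(X_t) - \partial f(X_s)$ to first order in $(\der X)_{st}$. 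By Proposition \ref{prop:diff-calc} applied componentwise (i.e.\ viewing each slot of the tensor product as a function of $X$), one gets
$$
(\der \bu)_{st} = \partial^2 f(X_s) \sharp (\der X)_{st} + (\der X)_{st} \sharp \partial^2 f(X_s) + \bu^\flat_{st},
$$
where the first term collects the variation in the first two slots of $\partial^2 f$ and the second the variation in the last slot, according to the $\sharp$ conventions for $\ca^{\hat\otimes 3}$ fixed in Section \ref{subsec:tensor}. Matching this with (\ref{decompo-bipro}) forces $\mathbb{U}^{X,1}_s = \mathbb{U}^{X,2}_s = \partial^2 f(X_s)$.

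The bookkeeping step is to confirm the regularity of all the pieces. Adaptedness is immediate: $X_s \in \ca_s$, so $\partial f(X_s) \in \ca_s^{\hat\otimes 2}$ and $\partial^2 f(X_s) \in \ca_s^{\hat\otimes 3}$. For the H\"older bounds, I would invoke the Lipschitz estimates in Proposition \ref{prop:diff-calc}: since $f \in \mathbf{F}_3 = \mathbf{F}_{2+1}$ (resp.\ $f$ polynomial), one has $\|\partial f(X_t) - \partial f(X_s)\|_{\ca^{\hat\otimes 2}} \leq c\|f\|_2 \|X_t - X_s\| \leq c' |t-s|^\ga$ using the $\ga$-H\"older continuity of $X$, so $\bu \in \cac_1^\ga(\ca^{\hat\otimes 2})$. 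Likewise $\|\partial^2 f(X_t) - \partial^2 f(X_s)\|_{\ca^{\hat\otimes 3}} \leq c\|f\|_3 \|X_t-X_s\| \leq c'|t-s|^\ga$, giving $\mathbb{U}^{X,i} \in \cac_1^\ga(\ca^{\hat\otimes 3})$, and the sup-norm bound $\|\partial^2 f(X_s)\|_{\ca^{\hat\otimes 3}} \leq \|f\|_2$ handles the $\cac_1^0$ part. Finally $\bu^\flat_{st}$ is the Taylor remainder, which by the second-order Lipschitz estimate of Proposition \ref{prop:diff-calc} (the $\|\partial^k f(X_1) - \partial^k f(X_2) - \partial^k f(Y_1) + \partial^k f(Y_2)\|$ bound, or equivalently a direct second-order Taylor expansion using (\ref{duha-formu})) is controlled by $c\|f\|_3 \|X_t - X_s\|^2 \leq c'|t-s|^{2\ga}$, so $\bu^\flat \in \cac_2^{2\ga}(\ca^{\hat\otimes 2})$.

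I do not expect a serious obstacle here; the statement is essentially a corollary of Proposition \ref{prop:diff-calc}. The one point requiring a little care is the identification of the two $\sharp$-contractions in the expansion of $(\der \bu)_{st}$ with the structural terms $(\der X)_{st} \sharp \mathbb{U}^{X,1}_s$ and $\mathbb{U}^{X,2}_s \sharp (\der X)_{st}$ of (\ref{decompo-bipro}): one must check, using the conventions $X \sharp (U_1 \otimes U_2 \otimes U_3) = (U_1 X U_2) \otimes U_3$ and $(U_1 \otimes U_2 \otimes U_3)\sharp X = U_1 \otimes (U_2 X U_3)$ from Section \ref{subsec:tensor}, that inserting the increment $(\der X)_{st}$ into the first two slots of $\partial^2 f(X_s)$ reproduces exactly the variation of the first tensor factor of $\partial f$, and insertion into the last two slots reproduces the variation of the second factor; for polynomials this is a direct index manipulation on $\sum X_s^i \otimes X_s^j \otimes X_s^{k-2-i-j}$, and it passes to $\mathbf{F}_3$ through the exponential representation. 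The symmetry $\mathbb{U}^{X,1} = \mathbb{U}^{X,2}$ reflects the fact that $\partial^2 f(X_s)$ is symmetric under the relevant slot permutations when evaluated on the single point $X_s$.
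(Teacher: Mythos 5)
Your argument is correct and follows essentially the same route as the paper's (very brief) proof: a first-order expansion of $\partial f(X_t)-\partial f(X_s)$ via Duhamel's formula (\ref{duha-formu}), identification of the resulting triple tensors with $\partial^2 f(X_s)$ inserted through the two $\sharp$-contractions, and the Lipschitz/second-order estimates behind Proposition \ref{prop:diff-calc} for the H\"older bookkeeping of $\bu$, $\mathbb{U}^{X,i}$ and $\bu^\flat$. The only ingredient the paper makes explicit that you leave implicit in ``it passes to $\mathbf{F}_3$ through the exponential representation'' is the reparametrization identity $\int_0^1 d\al\,\al\int_0^1 d\be\; e^{(1-\al)X}\otimes e^{\al\be X}\otimes e^{\al(1-\be)X}=\iint_{\al,\be\ge 0,\ \al+\be\le 1} e^{\al X}\otimes e^{\be X}\otimes e^{(1-\al-\be)X}$, which is exactly what identifies your Duhamel-produced triprocesses with $\partial^2 f(X_s)$ in the $\mathbf{F}_3$ case.
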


\begin{proof}
Here again, the result follows from standard expansions. If $f\in \mathbf{F}_3$, we can for instance conclude by combining Formula (\ref{duha-formu}) with the basic identity: for every $X\in \ca$,
$$\int_0^1 d\alpha \, \alpha \int_0^1 d\beta\, e^{(1-\al)X} \otimes e^{\al \beta X}\otimes e^{\al(1-\beta)X}=\iint_{\substack{\al,\be \geq 0\\ \al+\be \leq 1}}  e^{  \al X}\otimes e^{ \beta X} \otimes e^{ (1-\al-\be) X}.$$
\end{proof}

We can now define the integral of a generic $\bu \in \mathbf{Q}(X)$ with respect to $X$:

\begin{proposition}\label{prop-int-gen}
Assume that we are given a product Lévy area $\bx$ above $X$, in the sense of Definition \ref{defi:aire-gene}. For every $\bu\in \mathbf{Q}(X)$ with decomposition (\ref{decompo-bipro}), we set
\begin{equation}\label{int-bipro}
\cj_{st}(\bu \sharp dX):=(\id -\Lambda \der)_{st}(M),
\end{equation}
where
\begin{equation}\label{M-bipro}
M_{st}:=\bu_s \sharp (\der X)_{st}+[\bx_{st}\times \id](\mathbb{U}^{X,1}_s)+[\id \times \bx^\ast_{st}](\mathbb{U}^{X,2}_s).
\end{equation}
Then:

\smallskip

\noindent
(i) $\cj(\bu \sharp dX)$ is well-defined as a two-parameter process in $\cac_2^{\ga}(\ca)$;

\

\noindent
(ii) If $X$ is a differentiable process in $\ca$ and $\bx$ is understood in the classical Lebesgue sense, then $\cj_{st}(\bu \sharp dX)$ coincides with the classical Lebesgue integral $\int_s^t [ \bu_u \sharp X'_u] \, du$;

\

\noindent
(iii) For every $A\in \ca$, there exists a unique process $Z\in \cq(X)$ such that $Z_0=A$ and $(\der Z)_{st}=\cj_{st}(\bu \sharp dX)$; 

\

\noindent
(iv) One has, for any interval $I=[\ell_1,\ell_2]$,
\begin{equation}\label{bou-inte}
\cn[Z;\cq(I;X)] \leq c_{X} \big\{ 1+\cn[\bu;\cac_1^0(I;\ca \hat{\otimes} \ca)] +\|\mathbb{U}^{X,1}_{\ell_1}\| +\|\mathbb{U}^{X,2}_{\ell_1}\|+|I|^\ga \cn[\bu;\mathbf{Q}(I;X)]\big\},
\end{equation}
where $c_{X}$ is an affine expression in $(\cn[X;\cac_1^\ga(\ca)],\cn[\bx;\cac_2^{2\ga}(\cl(\ca_\to))])$.
\end{proposition}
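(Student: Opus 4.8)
The strategy is the standard "sewing + fixed point" scheme adapted to the non-commutative, biprocess setting. The starting point is to verify that the increment $M$ defined by \eqref{M-bipro} satisfies the hypotheses of the sewing lemma, Theorem \ref{theo-lambda}, so that $\cj(\bu\sharp dX)=(\id-\Lambda\der)(M)$ is well-defined. For this I would compute $(\der M)_{sut}$ explicitly. The term $\bu_s\sharp(\der X)_{st}$ contributes $-(\der\bu)_{su}\sharp(\der X)_{ut}$ after using $\der(\der X)=0$; the terms $[\bx_{st}\times\id](\mathbb{U}^{X,1}_s)$ and $[\id\times\bx^\ast_{st}](\mathbb{U}^{X,2}_s)$ contribute, via Chen's relation \eqref{chen} for $\bx$ (and its adjoint for $\bx^\ast$) together with the triprocess decompositions $\der\mathbb{U}^{X,i}$, a collection of terms. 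Substituting the controlled-biprocess decomposition \eqref{decompo-bipro} for $(\der\bu)_{su}$, the leading pieces $(\der X)_{su}\sharp\mathbb{U}^{X,1}_s$ and $\mathbb{U}^{X,2}_s\sharp(\der X)_{su}$ cancel exactly against the Chen-identity contributions from $\bx$ and $\bx^\ast$, leaving only genuinely higher-order remainders: terms of the form $(\der X$ or $\bx)$ applied to $(\der\mathbb{U}^{X,i})_{su}$, and $\bu^\flat_{su}\sharp(\der X)_{ut}$. Each such term is estimated using $\bu\in\cac_1^\ga$, $\mathbb{U}^{X,i}\in\cac_1^\ga$, $\bu^\flat\in\cac_2^{2\ga}$, $X\in\cac_1^\ga$ and $\bx\in\cac_2^{2\ga}(\cl(\ca_\to))$, giving $\der M\in\cac_3^{3\ga}(\ca)$ with $3\ga>1$; Theorem \ref{theo-lambda} then applies. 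This establishes \emph{(i)}, with the $\cac_2^\ga$ bound on $\cj(\bu\sharp dX)$ coming from $M\in\cac_2^\ga$ directly (the $(\der X)_{st}$ and $\bx_{st}$ pieces are $\ga$- and $2\ga$-Hölder respectively) plus the contraction estimate \eqref{contraction} applied to the $\Lambda\der M$ piece.

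For \emph{(ii)}, when $X$ is differentiable one observes that $M_{st}=\int_s^t[\bu_u\sharp X'_u]\,du+\tilde R_{st}$ with $\tilde R\in\cac_3^{\mu}$-controlled (indeed $\der\tilde R=\der M$ by the same cancellation, so $\tilde R$ is the $\Lambda$-image of $\der M$); alternatively one invokes Corollary \ref{cor:integration} directly, since $g:=M$ satisfies $\der g=\der M\in\cac_3^{3\ga}$ and $(\id-\Lambda\der)g$ is then the limit of Riemann sums $\sum M_{t_it_{i+1}}$, which converges to $\int[\bu_u\sharp X'_u]\,du$ because the area corrections are $o(1)$. This is the quickest route and is essentially a one-line consequence of the corollary.

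For \emph{(iii)}, setting $(\der Z)_{st}:=\cj_{st}(\bu\sharp dX)$ one must check the cocycle/additivity condition that makes $Z$ a genuine one-parameter path: this is exactly $\der(\cj(\bu\sharp dX))=0$, which holds because $\cj(\bu\sharp dX)=(\id-\Lambda\der)M$ lies in $\text{Ker}\,\der_2$ (apply $\der$ and use $\der\Lambda\der=\der$ on $\cac_3^\mu\cap\text{Im}\,\der_2$). Hence there is a unique path $Z$ with $Z_0=A$ and those increments, by Proposition \ref{prop:coho} ($\text{Ker}\,\der_2=\text{Im}\,\der_1$). That $Z\in\cq(X)$ follows by reading off its decomposition from \eqref{M-bipro}: the dominant part of $(\der Z)_{st}$ is $\bu_s\sharp(\der X)_{st}$, so $\bz^X_s=\bu_s$, while everything else ($\bx$-terms and $-\Lambda\der M$) is $2\ga$-Hölder, hence $Z^\flat\in\cac_2^{2\ga}(\ca)$; adaptedness of $\bz^X=\bu$ is inherited from $\bu$, and adaptedness of $Z$ from the fact that $\bx_{st}$ maps $\ca_s\hat\otimes\ca_s$ into $\ca_t$. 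Finally \emph{(iv)} is obtained by carrying explicit constants through the above estimates on a subinterval $I=[\ell_1,\ell_2]$: the terms $\bu_{\ell_1}$, $\mathbb{U}^{X,1}_{\ell_1}$, $\mathbb{U}^{X,2}_{\ell_1}$ enter through $M_{\ell_1,\cdot}$ without a factor of $|I|$, whereas every increment-type quantity ($\der\bu$, $\der\mathbb{U}^{X,i}$, $\bu^\flat$, $\der X$, $\bx$) picks up at least one power $|I|^\ga$, so regrouping yields precisely \eqref{bou-inte} with $c_X$ affine in $(\cn[X;\cac_1^\ga],\cn[\bx;\cac_2^{2\ga}(\cl(\ca_\to))])$.

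The main obstacle is the bookkeeping in \emph{(i)}: one must carry out the $\der M$ computation carefully, matching the six-or-so terms coming from $\der(\bu\sharp\der X)$, $\der([\bx\times\id]\mathbb{U}^{X,1})$ and $\der([\id\times\bx^\ast]\mathbb{U}^{X,2})$ against the decomposition \eqref{decompo-bipro}, and checking that the $\sharp$-associativity and adjoint relations of Lemma \ref{lem:basi} under Config.\,2 make the would-be $\cac_3^{2\ga}$ (non-integrable) terms cancel, leaving only $\cac_3^{3\ga}$ remainders. Once that cancellation is in place the rest is routine application of Theorem \ref{theo-lambda}, Proposition \ref{prop:coho}, and Corollary \ref{cor:integration}.
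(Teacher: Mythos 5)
Your plan reproduces the paper's argument almost line by line: part (i) via the same $\der M$ computation (Chen identity for $\bx,\bx^\ast$ cancelling the leading terms of $(\der\bu)_{su}\sharp(\der X)_{ut}$, leaving a $3\ga$-remainder), parts (iii)--(iv) via Proposition \ref{prop:coho} and the contraction bound \eqref{contraction}, and part (ii) via the smooth Taylor expansion / Riemann-sum characterisation that the paper delegates to Section \ref{subsec:heuri} and Corollary \ref{cor:integration}. Your extra remarks on adaptedness of $Z$ and on which quantities enter \eqref{bou-inte} without the $|I|^\ga$ factor are correct and merely make explicit what the paper leaves implicit.
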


\begin{proof}
\emph{(i)} According to Theorem \ref{theo-lambda}, to show that (\ref{int-bipro}) is a well-defined expression, it suffices to prove that $J:=-\der M \in \cac_3^{3\ga}(\ca)$. To this end, we will use the short notation: for $A,B \in \cac_k(\ca)$ ($k\in \{1,2\}$) and $\la >0$, $A\equiv_\la B \Leftrightarrow A-B \in \cac_k^\la(\ca)$. Observe first that by Lemma \ref{lem:basi},
\begin{eqnarray}
\der (\bu \sharp (\der X))_{sut}&=& -(\der \bu)_{su} \sharp (\der X)_{ut} \nonumber\\
&\equiv_{3\ga}& -(\der X)_{su} \sharp \mathbb{U}^{X,1}_s \sharp (\der X)_{ut}-(\der X)_{ut} \sharp \mathbb{U}^{X,2}_s \sharp (\der X)_{su}.\label{int-pr}
\end{eqnarray}
On the other hand, owing to the Hölder regularity of $\mathbb{U}^{X,1},\mathbb{U}^{X,2},\bx$ and using Chen's identity (\ref{chen}), we deduce that
$$\der((\bx\times \id)[\mathbb{U}^{X,1}])_{sut} \equiv_{3\ga} ((\der \bx)_{sut}\times \id)[\mathbb{U}^{X,1}_s]= (\der X)_{su} \sharp \mathbb{U}^{X,1}_s \sharp (\der X)_{ut},$$
and similarly $\der((\id \times \bx^\ast)[\mathbb{U}^{X,2}])_{sut} \equiv_{3\ga}  (\der X)_{ut} \sharp \mathbb{U}^{X,2}_s \sharp (\der X)_{su}$. Going back to (\ref{int-pr}), we see that $J\equiv_{3\ga}0$ as expected. 

\smallskip

\noindent
\emph{(ii)} It suffices to follow the lines of Section \ref{subsec:heuri} by starting with $\int_s^t  \bu_u \sharp dX_u:=\int_s^t [ \bu_u \sharp X'_u] \, du$.

\smallskip

\noindent
\emph{(iii)} By Theorem \ref{theo-lambda}, one has $\der\big( \cj(\bu \cdot dX ) \big)=0$, so that the existence and uniqueness of $Z$ follows from Proposition \ref{prop:coho}.

\smallskip

\noindent
\emph{(iv)} The bound (\ref{bou-inte}) can be derived from the contraction property (\ref{contraction}), upon noticing that
$$\cn[J;\cac_3^{3\ga}(I;\ca)]\leq c_{X} \{1+\cn[\bu;\mathbf{Q}(I;X)]\}.$$

\end{proof}

By Corollary \ref{cor:integration}, and in a perhaps more telling way, we can also describe the above integral as the limit of \emph{corrected Riemann sums}:

\begin{corollary}\label{cor:int-riem}
Under the assumptions of Proposition \ref{prop-int-gen}, it holds that for all $s<t$,
$$\cj_{st}(\bu \sharp dX)= \lim_{|D_{st}| \to 0} \sum_{t_i\in D_{st}} \Big\{ \bu_{t_i} \sharp (\der X)_{t_it_{i+1}}+[\bx_{t_it_{i+1}}\times \id](\mathbb{U}^{X,1}_{t_i})+[\id \times \bx^\ast_{t_it_{i+1}}](\mathbb{U}^{X,2}_{t_i})\Big\},$$
where $D_{st} = \{t_0=s<t_1 <\ldots<
t_n=t\}$ is any partition of $[s,t]$ with mesh $|D_{st}|$ tending to $0$.
\end{corollary}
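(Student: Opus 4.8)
The plan is to invoke Corollary~\ref{cor:integration} with the explicit choice of $g$ dictated by the definition of $\cj$. Recall that in Proposition~\ref{prop-int-gen} we set $\cj_{st}(\bu\sharp dX)=(\id-\Lambda\der)_{st}(M)$ with
$$M_{st}=\bu_s\sharp(\der X)_{st}+[\bx_{st}\times\id](\mathbb{U}^{X,1}_s)+[\id\times\bx^\ast_{st}](\mathbb{U}^{X,2}_s).$$
First I would observe that $M\in\cac_2(\ca)$ (it vanishes on the diagonal since $(\der X)_{ss}=0$ and $\bx_{ss}=0$), so it is a legitimate candidate for the role of $g$ in Corollary~\ref{cor:integration}. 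Thus, setting $g:=M$ in that corollary, the hypothesis to be checked is precisely $\der M\in\cac_3^{\mu}$ for some $\mu>1$, and this is exactly what was established in the proof of part~(i) of Proposition~\ref{prop-int-gen}, where we showed $J:=-\der M\in\cac_3^{3\ga}(\ca)$ with $3\ga>1$. So the regularity hypothesis of Corollary~\ref{cor:integration} is automatically met.

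Next, I would identify the ``$f$'' of Corollary~\ref{cor:integration}: by definition $\der f:=(\id-\Lambda\der)g=(\id-\Lambda\der)M=\cj(\bu\sharp dX)$, so the conclusion of that corollary reads
$$\cj_{st}(\bu\sharp dX)=(\der f)_{st}=\lim_{|D_{st}|\to0}\sum_{t_i\in D_{st}}M_{t_it_{i+1}},$$
and substituting the explicit formula for $M_{t_it_{i+1}}$ gives exactly the stated corrected Riemann-sum expression. The only thing that requires a line of justification is that $\Lambda\der M\in\cac_2^{3\ga}$ is a genuine ``higher-order remainder'', i.e. that its contribution to each partition sum vanishes in the limit — but this is built into the proof of Corollary~\ref{cor:integration} itself (telescoping of the $(\id-\Lambda\der)M$ part plus the $\cac_2^{\mu}$-bound with $\mu>1$), so nothing new is needed here.

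There is essentially no obstacle: the statement is a direct specialisation of Corollary~\ref{cor:integration} once one recognises that $\cj_{st}(\bu\sharp dX)=(\id-\Lambda\der)_{st}(M)$ is of the form $(\id-\Lambda\der)g$ with $g=M$ and that $\der M\in\cac_3^{3\ga}$ with $3\ga>1$ was already verified in Proposition~\ref{prop-int-gen}~(i). The mildest care point — and the one I would state explicitly — is simply to record that $M$ is continuous on the simplex and vanishes on diagonals so that it belongs to $\cac_2(\ca)$ as required by Corollary~\ref{cor:integration}; everything else is a substitution. The proof is therefore one short paragraph.

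\begin{proof}
By Proposition~\ref{prop-int-gen}, $\cj_{st}(\bu\sharp dX)=(\id-\Lambda\der)_{st}(M)$ where $M$ is given by (\ref{M-bipro}). The process $M$ clearly belongs to $\cac_2(\ca)$ (in particular it vanishes on the diagonal, since $(\der X)_{ss}=0$ and, by condition~(i) of Definition~\ref{defi:aire-gene} together with (\ref{chen}), $\bx_{ss}=0$), and we showed in the proof of Proposition~\ref{prop-int-gen}~(i) that $\der M\in\cac_3^{3\ga}(\ca)$ with $3\ga>1$. We may thus apply Corollary~\ref{cor:integration} with $g:=M$ and $\mu:=3\ga$: since $\der f:=(\id-\Lambda\der)M=\cj(\bu\sharp dX)$, the corollary yields
$$\cj_{st}(\bu\sharp dX)=\lim_{|D_{st}|\to0}\sum_{t_i\in D_{st}}M_{t_it_{i+1}}.$$
Replacing $M_{t_it_{i+1}}$ by its expression (\ref{M-bipro}) gives the announced formula.
\end{proof}
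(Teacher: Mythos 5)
Your proof is correct and matches the paper's own (unwritten but explicitly indicated) approach: the paper introduces the corollary with the phrase "By Corollary~\ref{cor:integration}," and the verification that $\der M \in \cac_3^{3\ga}$ with $3\ga>1$ is exactly what was done in part~(i) of Proposition~\ref{prop-int-gen}, so the statement is indeed a direct specialisation of Corollary~\ref{cor:integration} with $g:=M$. The only minor remark is that appealing to Chen's identity (\ref{chen}) to see $\bx_{ss}=0$ is superfluous — the $2\ga$-H\"older bound from condition~(i) of Definition~\ref{defi:aire-gene} already forces this — but that does not affect correctness.
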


Let us now be more specific about the stability of self-adjoint paths with respect to this integration procedure. Given $f\in \mathbf{F}_0$, we write $f^\ast$ for the function in $\mathbf{F}_0$ such that $f^\ast(X)=f(X)^\ast$ for all $X\in \ca_\ast$.
\begin{proposition}\label{prop:stab-adj-int}
Assume that we are given a product Lévy area $\bx$ above $X$. Let $A\in \ca_\ast$, $f=(f_1,\ldots,f_m) \in\mathbf{F}_2^m$ and $g=(f_1^\ast,\ldots,f_m^\ast)$ or $(f_m^\ast,\ldots,f_1^\ast)$. If $Y\in \cq_\ast(X)$, then the process $Z$ defined by $(Z_0=A \ , \ (\der Z)_{st}=\sum_{i=1}^m \cj_{st}(f_i(Y) \cdot dX \cdot g_i(Y)))$ belongs to $\cq_\ast(X)$ as well.
\end{proposition}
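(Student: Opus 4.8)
The strategy is to verify the two self-adjointness conditions defining $\cq_\ast(X)$ directly on the decomposition of $Z$ coming from Proposition \ref{prop-int-gen}, exploiting the algebraic relations of Lemma \ref{lem:basi} under \textbf{Config. 2}. First I would record the decomposition of the biprocess $\bu:=\sum_{i=1}^m f_i(Y)\otimes g_i(Y)$: since $Y\in\cq_\ast(X)$ and $f\in\mathbf{F}_2^m$, Proposition \ref{prop:guid-ex-1} gives $\bu\in\mathbf{Q}(X)$ with $\mathbb{U}^{X,1}_s=\sum_i[\partial f_i(Y_s)\cdot\by^X_s]\otimes g_i(Y_s)$ and $\mathbb{U}^{X,2}_s=\sum_i f_i(Y_s)\otimes[\partial g_i(Y_s)\cdot\by^X_s]$. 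The key structural observation is that, because $g$ is the reversed (or plain) tuple of adjoints of $f$ and because the $\ast$-operation on $\ca\hat\otimes\ca$ under Config. 2 is $(U\otimes V)^\ast=V^\ast\otimes U^\ast$, one has $\bu^\ast=\bu$, i.e.\ $\bu\in\mathbf{Q}_\ast(X)$ in the obvious sense, with moreover $(\mathbb{U}^{X,1}_s)^\ast=\mathbb{U}^{X,2}_s$ (the three-fold $\ast$ reversing the order of the tensor legs). Here I would use that $\partial(f^\ast)=(\partial f)^\ast$ and $\partial^2(f^\ast)=(\partial^2 f)^\ast$ in the appropriate tensor-$\ast$ sense, together with $(\by^X_s)^\ast=\by^X_s$ from $Y\in\cq_\ast(X)$.

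Next I would show that the generator $M_{st}$ in \eqref{M-bipro} is self-adjoint, using the defining identity $\bx^\ast_{st}[\mathbf{U}]=\bx_{st}[\mathbf{U}^\ast]^\ast$ and the Config. 2 relations $[\mathbf{U}\sharp X]^\ast=\mathbf{U}^\ast\sharp X^\ast$ and $[X\sharp\mathbb{U}]^\ast=X^\ast\sharp\mathbb{U}^\ast$ from Lemma \ref{lem:basi}. Concretely, $(\bu_s\sharp(\der X)_{st})^\ast=\bu_s^\ast\sharp(\der X)_{st}=\bu_s\sharp(\der X)_{st}$ since $X$ is self-adjoint; and $([\bx_{st}\times\id](\mathbb{U}^{X,1}_s))^\ast=[\id\times\bx^\ast_{st}]((\mathbb{U}^{X,1}_s)^\ast)=[\id\times\bx^\ast_{st}](\mathbb{U}^{X,2}_s)$, so the two correction terms swap under $\ast$ and $M_{st}^\ast=M_{st}$. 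Since the sewing map $\Lambda$ and the operators $\der_1,\der_2$ commute with the (isometric, $\R$-linear) involution on the relevant $\cac_k$-spaces, $(\der Z)_{st}=(\id-\Lambda\der)_{st}(M)$ inherits self-adjointness: $(\der Z)_{st}^\ast=(\der Z)_{st}$. Combined with $Z_0=A\in\ca_\ast$, this yields $Z_s^\ast=Z_s$ for all $s$.

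Finally I must check the second condition, $(\bz^X_s)^\ast=\bz^X_s$, where $\bz^X$ is the Gubinelli derivative of $Z$ read off from \eqref{decompo-gene}. From the construction of the integral one identifies $\bz^X_s=\bu_s$ (the $(\der X)_{st}$-coefficient in $M_{st}$, modulo higher-order terms absorbed into $Z^\flat$), so self-adjointness of $\bz^X_s$ is exactly $\bu_s^\ast=\bu_s$, already established. One then writes $Z^\flat_{st}=(\der Z)_{st}-\bu_s\sharp(\der X)_{st}$ and notes it is automatically self-adjoint as a difference of self-adjoint terms. The main obstacle, and the point requiring care rather than ingenuity, is bookkeeping the involution through the mixed tensor operations $\times\id$, $\id\times(\cdot)$, $\sharp$ on $\ca^{\hat\otimes 2}$ and $\ca^{\hat\otimes 3}$ under Config. 2 — in particular getting the leg-reversals right so that $\mathbb{U}^{X,1}$ and $\mathbb{U}^{X,2}$ genuinely exchange under $\ast$, and checking that $\Lambda$ commutes with the involution (which follows since $\Lambda$ is defined by a norm-convergent limit of iterated $\der$-type corrections and $\ast$ is a continuous $\R$-linear isometry of each $\cac_k^\la$). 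Once these compatibilities are in place the result is immediate.
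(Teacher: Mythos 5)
Your proof is correct and rests on the same key algebraic observation as the paper's: the generator $M_{st}$ of the integral is self-adjoint, using $\bu_s^\ast=\bu_s$, $(\by^X_s)^\ast=\by^X_s$, $\partial(f^\ast)=(\partial f)^\ast$, and the Config.~2 relations from Lemma~\ref{lem:basi} to show the two $\bx$-correction terms swap under $\ast$. The only genuine difference is how this is propagated to $(\der Z)_{st}$: the paper invokes Corollary~\ref{cor:int-riem} and writes $(\der Z)_{st}$ as a limit of corrected Riemann sums $\sum_k M_{t_kt_{k+1}}$, each term self-adjoint, so the limit is too; you instead argue directly that the sewing map $\Lambda$ and $\der$ commute with the involution. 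These two routes are essentially equivalent (the Riemann-sum formulation is just a concrete way of seeing that $\Lambda$ respects $\ast$), and the paper's choice is perhaps cleaner because it avoids the need to justify continuity of $\ast$ on the $\cac_k^\la$-spaces explicitly. Two small remarks: the invocation of $\partial^2(f^\ast)=(\partial^2 f)^\ast$ is unnecessary here since only first tensor derivatives appear in $M_{st}$; and your closing step about $Z^\flat$ being self-adjoint is not required by the definition of $\cq_\ast(X)$, which only asks for $Z_s^\ast=Z_s$ and $(\bz^X_s)^\ast=\bz^X_s$, both of which you have.
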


\begin{proof}
Suppose that $Y$ admits decomposition (\ref{decompo-gene}).
By Proposition \ref{prop:guid-ex-1} and Corollary \ref{cor:int-riem}, $\big[\sum_{i=1}^m \cj(f_i(Y) \cdot dX \cdot g_i(Y))\big]^\ast=\lim \sum_k M_{t_k t_{k+1}}^\ast$, where $$M_{st}:=\sum\nolimits_{i=1}^m \big\{f_i(Y_s) \cdot (\der X)_{st} \cdot g_i(Y_s)+\bx_{st}\big[\partial f_i(Y_s) \cdot \by^X_s\big] \cdot g_i(Y_s)+f_i(Y_s) \cdot \bx^\ast_{st}\big[\partial g_i(Y_s) \cdot \by^X_s\big]\big\}.$$
Therefore, we can conclude by observing that $M_{st}\in \ca_\ast$ for all $s<t$, which is a matter of elementary algebraic considerations.
\end{proof}

Once endowed with the above definition and controls on the integral, we can turn to our initial objective, namely handling (\ref{sys-base-intro}).

\begin{theorem}\label{sol}
Assume that we are given a product Lévy area $\bx$ above $X$. Let $f=(f_1,\ldots,f_m) \in\mathbf{F}_3^m$, $g=(f_1^\ast,\ldots,f_m^\ast)$ or $(f_m^\ast,\ldots,f_1^\ast)$, and fix $A \in \ca_\ast$. Then the equation
\begin{equation}\label{sys-base}
Y_0=A \quad , \quad (\der Y)_{st}=\sum_{i=1}^m \cj_{st}(f_i(Y) \cdot dX \cdot g_i(Y)), \quad s<t \in [0,T],
\end{equation}
interpreted with Propositions \ref{prop:guid-ex-1} and \ref{prop-int-gen}, admits a unique solution $Y\in \cq_\ast(X)$. Moreover,
\begin{equation}\label{bound-gene}
\cn[Y;\cq(X)] \leq P\big(\cn[X;\cac_1^\ga(\ca)],\cn[\bx,\cac_2^{2\ga}(\cl(\ca_\to))] \big),
\end{equation}
for some polynomial expression $P$ depending only on $f$, $T$ and $A$.
\end{theorem}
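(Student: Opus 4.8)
The plan is to solve the fixed-point equation (\ref{sys-base}) by a standard Picard iteration/contraction argument carried out in the Banach space $\cq_\ast(I;X)$ of self-adjoint controlled processes over a small interval $I=[\ell_1,\ell_2]\subset[0,T]$, and then to patch the local solutions together. The map under consideration is $\Gamma:\cq_\ast(I;X)\to\cq_\ast(I;X)$ sending $Y$ to the process $Z$ with $Z_{\ell_1}=A$ (or the current value at the left endpoint) and $(\der Z)_{st}=\sum_{i=1}^m\cj_{st}(f_i(Y)\cdot dX\cdot g_i(Y))$. That $\Gamma$ is well defined already requires combining the earlier results: Proposition \ref{prop:guid-ex-1} shows each $f_i(Y)\otimes g_i(Y)$ lies in $\mathbf{Q}(I;X)$ with the quadratic bound (\ref{bou-1}), Proposition \ref{prop-int-gen} then produces $\cj(f_i(Y)\cdot dX\cdot g_i(Y))\in\cac_2^\ga$ together with a process $Z\in\cq(X)$ realizing these increments, and Proposition \ref{prop:stab-adj-int} guarantees that self-adjointness is preserved, so indeed $Z\in\cq_\ast(I;X)$.

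First I would establish the \emph{local a priori bound}: feeding (\ref{bou-1}) into (\ref{bou-inte}) gives
$$\cn[Z;\cq(I;X)]\leq c_X\big\{1+\cn[Y;\cac_1^0(I;\ca)]+\|\mathbb{U}^{X,1}_{\ell_1}\|+\|\mathbb{U}^{X,2}_{\ell_1}\|+|I|^\ga\,c_{f,g}(1+\cn[Y;\cq(I;X)]^2)\big\},$$
where $c_X$ is affine in $(\cn[X;\cac_1^\ga(\ca)],\cn[\bx;\cac_2^{2\ga}(\cl(\ca_\to))])$. The point is that the only term multiplying $\cn[Y;\cq(I;X)]^2$ carries a factor $|I|^\ga$, while the non-dynamical contributions ($\cn[Y;\cac_1^0]$ at a point, $\|\mathbb{U}^{X,i}_{\ell_1}\|$) are controlled by the value $Y_{\ell_1}$ and the fixed data $f,g$ independently of the size of $Y$ on $I$. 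Hence, for $|I|$ small enough depending only on $c_X$ and $f,g$, a ball of radius $\rho$ (with $\rho$ depending on $\|Y_{\ell_1}\|$ and $f$) is stable under $\Gamma$. For the \emph{contraction}, I would run the same estimates on the difference $\Gamma(Y^{(1)})-\Gamma(Y^{(2)})$: this needs the Lipschitz-type bounds of Proposition \ref{prop:diff-calc} on $\partial^k f_i$ to control $\cn[f_i(Y^{(1)})\otimes g_i(Y^{(1)})-f_i(Y^{(2)})\otimes g_i(Y^{(2)});\mathbf{Q}(I;X)]$ by $\cn[Y^{(1)}-Y^{(2)};\cq(I;X)]$ times a constant, followed again by (\ref{bou-inte}) (which is linear in $\bu$), producing a contraction factor $c\,|I|^\ga$. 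Shrinking $|I|$ once more makes $\Gamma$ a strict contraction on the stable ball, so the Banach fixed point theorem yields a unique local solution in $\cq_\ast(I;X)$.

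Next I would globalize: the length $|I|$ of a viable step can be taken uniform, because the constants $c_X,c_{f,g}$ do not deteriorate along the construction — the only quantities entering the choice of $|I|$ are $\cn[X;\cac_1^\ga]$, $\cn[\bx;\cac_2^{2\ga}(\cl(\ca_\to))]$ and the fixed $f,g$ — and the restart value $\|Y_{\ell_2}\|$ stays bounded by the a priori estimate propagated from the previous step. Covering $[0,T]$ by finitely many such intervals $[t_k,t_{k+1}]$ and gluing the pieces (uniqueness on overlaps shows the increments are consistent, so one genuinely obtains a single element of $\cq_\ast(X)$), I get the global solution. Finally, iterating the bound (\ref{bou-inte}) over these $N\sim T/|I|$ steps — at each step the norm is roughly multiplied and incremented by a fixed amount — yields a bound of the announced form (\ref{bound-gene}), i.e. polynomial in $(\cn[X;\cac_1^\ga(\ca)],\cn[\bx;\cac_2^{2\ga}(\cl(\ca_\to))])$ with coefficients depending only on $f$, $T$, $A$.

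The main obstacle I anticipate is the contraction estimate on the biprocess level, i.e.\ showing that $Y\mapsto f_i(Y)\otimes g_i(Y)$ is locally Lipschitz from $\cq_\ast(X)$ into $\mathbf{Q}(X)$ with a good constant. This is more delicate than the mere boundedness statement of Proposition \ref{prop:guid-ex-1} because one must track the differences of \emph{all} the components $\mathbb{U}^{X,1},\mathbb{U}^{X,2},\bu^\flat$ of the controlled-biprocess decomposition; the $\bu^\flat$ remainder in particular is a second-order Taylor term, so controlling its difference requires the second-order Lipschitz inequalities for $\partial^k f_i$ in Proposition \ref{prop:diff-calc} (the ones of the form $\|\partial^k f(X_1)-\partial^k f(X_2)-\partial^k f(Y_1)+\partial^k f(Y_2)\|\leq c\|f\|_{k+2}\{\cdots\}$), applied with $(X_1,X_2,Y_1,Y_2)$ built out of the two controlled processes $Y^{(1)},Y^{(2)}$ at times $s$ and $t$. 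Assembling these while keeping every estimate \emph{linear} in the relevant norm difference (so that the overall contraction factor is a clean $c\,|I|^\ga$) is where the bookkeeping is heaviest, though no new idea beyond what is already in Sections \ref{subsec:funct} and \ref{subsec:main} is needed.
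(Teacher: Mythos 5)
Your proposal is correct and follows essentially the same route as the paper: the paper defines the same map $\Gamma$ (using Proposition \ref{prop:stab-adj-int} for self-adjointness), obtains invariance of a ball from (\ref{bou-inte}) combined with (\ref{bou-1}), gets the contraction factor $c\,T_0^\ga$ from the Lipschitz-type estimates of Proposition \ref{prop:diff-calc} at the biprocess level, and concludes by Banach fixed point on a small interval followed by a patching argument, with (\ref{bound-gene}) derived from the same a priori bound. The technical point you flag (tracking differences of all components of the controlled-biprocess decomposition via the second-order Lipschitz inequalities) is precisely the step the paper invokes, so no discrepancy remains.
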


\begin{proof}
It follows the same pattern as in the commutative case (see \cite[Section 5]{gubi}). Thanks to Proposition \ref{prop:stab-adj-int}, we can define a map $\Gamma: Y\mapsto Z$ by the conditions: $Y,Z\in \cq_\ast(X)$, $(Y_0,\by^X_0)=(Z_0,\mathbf{Z}_0^X)=(A,\sum_{i=1}^m f_i(A)\otimes g_i(A))$ and for all $s<t$,
$$(\der Z)_{st}=\cj_{st}(\bu \sharp dX) \quad \text{where} \quad \bu:=\sum\nolimits_{i=1}^m f_i(Y)\otimes g_i(Y).$$
Then using successively (\ref{bou-inte}) and (\ref{bou-1}), we deduce that for any $T_0\in [0,T]$, 
\begin{eqnarray}
\cn[\Gamma(Y);\cq([0,T_0];X)] &\leq & c_{X} \big\{ 1+\cn[\bu;\cac_1^0([0,T_0])] +\|\mathbb{U}^{X,1}_{0}\| +\|\mathbb{U}^{X,2}_{0}\|+T_0^\ga \cn[\bu;\mathbf{Q}(I;X)] \big\}\nonumber\\
&\leq & c_{f,g,X} \big\{1+T_0^\ga \cn[Y;\cq([0,T_0];X)]^2 \big\}.\label{bou-sol}
\end{eqnarray}
Besides, with obvious notations, it holds that
$$\cn[\Gamma(Y)-\Gamma(\tilde{Y});\cq([0,T_0];X)] \leq  c_{X} \big\{ \cn[\bu-\tilde{\bu};\cac_1^0([0,T_0])]+ T_0^\ga \cn[\bu-\tilde{\bu};\mathbf{Q}(I;X)] \big\},$$
and with the help of the differential properties summed up in Proposition \ref{prop:diff-calc}, we can check that $\cn[\bu-\tilde{\bu};\cac_1^0([0,T_0])] \leq c_{f,g}  T_0^\ga \cn[Y-\tilde{Y};\cq([0,T_0];X)]$ and
$$\cn[\bu-\tilde{\bu};\mathbf{Q}(I;X)] \leq c_{f,g} \big\{1+\cn[Y;\cq([0,T_0])]^2+\cn[\tilde{Y};\cq([0,T_0])]^2\big\}\, \cn[Y-\tilde{Y};\cq([0,T_0];X)].$$
These inequalities easily allow us to conclude that, at least on a small interval $[0,T_0]$ with $T_0>0$, the restriction of $\Gamma$ to a suitable invariant ball of $\cq([0,T_0];X)$ yields a contraction map, leading us to the existence of a unique fixed-point for $\Gamma$, i.e., a unique solution to (\ref{sys-base}) on $[0,T_0]$.

\smallskip

The extension of this solution to the whole interval $[0,T]$ follows from a standard patching argument left to the reader. The bound (\ref{bound-gene}) can then be derived from (\ref{bou-sol}).
\end{proof}

\subsection{Approximation results}

We now propose to exhibit a few approximation results for the constructions of the previous section, in the spirit of Lyons' continuity properties of rough systems (as recalled in Section \ref{subsec:elements-lyons}). Remember that we focus on a generic $\ga$-Hölder path $X:[0,T]\to \ca_\ast$ with values in a given non-commutative probability space $(\ca,\vp)$, for some fixed $\ga >\frac13$, and that the underlying filtration $\{\ca_t\}_{t\in [0,T]}$ is the one generated by $X$. Our aim here is to show that, as in the classical rough paths theory, the above constructions are continuous functionals of the pair $(X,\bx)$, in a sense to be precised.

\smallskip

For every sequence of partitions $(D^n)$ of $[0,T]$ with mesh tending to zero, we denote by $\{X^n_t\}_{t\in [0,T]}=\{X^{D^n}_t\}_{t\in [0,T]}$ the sequence of linear interpolations of $X$ along $D^n$, i.e., if $D^n:=\{0=t_0< t_1<\ldots <t_k=T\}$, 
$$X^n_t:=X_{t_i}+\frac{t-t_i}{t_{i+1}-t_i} \{X_{t_{i+1}}-X_{t_i}\} \quad \text{for} \ t\in [t_i,t_{i+1}].$$
Then we define the sequence of approximated product Lévy areas by the natural formula: for every $\bu\in \ca \hat{\otimes} \ca$,
\begin{equation}\label{levy-area-appr}
\bx^n_{st}[\bu]=\bx^{D^n}_{st}[\bu]:=\int_s^t (\bu\sharp (\der X^n)_{su}) \cdot dX^n_u,
\end{equation}
where the integral is understood in the classical Lebesgue sense. 

\begin{proposition}\label{pro-approx-1}
Assume that there exists a path $\bx \in \cac_2^{2\ga}(\cl(\ca_\to))$ such that, as $n$ tends to infinity,
\begin{equation}\label{cond-conv}
X^n \to X \quad \text{in} \ \ \cac_1^\ga(\ca) \quad \text{and} \quad \bx^n \to \bx \quad \text{in} \ \ \cac_2^{2\ga}(\cl(\ca_\to)).
\end{equation}
Then $\bx$ defines a product Lévy area above $X$ (in the sense of Definition \ref{defi:aire-gene}) and for all $f,g \in \mathcal{P} \cup \mathbf{F}_3$, it holds that 
\begin{equation}\label{resu-conv-1}
\int f(X^n) \cdot dX^n \cdot g(X^n) \xrightarrow{n\to \infty} \cj(f(X) \cdot dX \cdot g(X)) \quad \text{in} \ \cac_2^\ga(\ca),
\end{equation}
where the integral in the limit is interpreted with Proposition \ref{prop-int-gen}. Similarly, for all $f \in \mathcal{P} \cup \mathbf{F}_3$, one has
\begin{equation}\label{resu-conv-2}
\int \partial f(X^n_u) \sharp dX^n_u \xrightarrow{n\to \infty} \cj(\partial f (X) \sharp dX) \quad \text{in} \ \cac_2^\ga(\ca),
\end{equation}
which in this case yields Itô's formula $\der f(X)=\cj(\partial f (X) \sharp dX)$.
\end{proposition}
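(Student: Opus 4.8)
The plan is to establish Proposition \ref{pro-approx-1} by combining two ingredients: first, a verification that the limiting object $\bx$ is a genuine product L\'evy area, and second, a stability argument showing that the integral map $\cj$ is continuous in the pair $(X,\bx)$ with respect to the relevant H\"older topologies. For the first part, note that each approximant $\bx^n$ is defined as a classical Lebesgue integral, so for fixed $n$ the smoothness of $X^n$ guarantees that $\bx^n_{st}[\bu]$ is well-defined; the algebraic Chen-type relation $(\der \bx^n)_{sut}[\bu]=(\bu\sharp(\der X^n)_{su})\cdot(\der X^n)_{ut}$ follows by splitting the integral $\int_s^t=\int_s^u+\int_u^t$ and using $(\bu\sharp(\der X^n)_{sv})=\bu\sharp(\der X^n)_{su}+\bu\sharp(\der X^n)_{uv}$ inside $\int_u^t$, exactly as in Section \ref{subsec:heuri}. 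Passing to the limit in (\ref{cond-conv}) and using that both sides of Chen's identity are continuous in $(X^n,\bx^n)$ for the chosen topologies, we obtain (\ref{chen}) for $\bx$; the bound $\bx\in\cac_2^{2\ga}(\cl(\ca_\to))$ is part of the hypothesis. Thus $\bx$ satisfies Definition \ref{defi:aire-gene}.

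For the convergence statement (\ref{resu-conv-1}), I would proceed as follows. Fix $f,g\in \mathcal{P}\cup\mathbf{F}_3$. By Proposition \ref{prop:guid-ex-1}, the biprocess $\bu^n:=f(X^n)\otimes g(X^n)$ belongs to $\mathbf{Q}(X^n)$ with explicit Gubinelli derivatives $\mathbb{U}^{X^n,1}_s=[\partial f(X^n_s)\cdot \1\otimes\1]\otimes g(X^n_s)$ (here using that for $X^n$ itself the controlling biprocess is the constant $\1\otimes\1$) and $\mathbb{U}^{X^n,2}_s=f(X^n_s)\otimes[\partial g(X^n_s)\cdot\1\otimes\1]$, and similarly $\bu:=f(X)\otimes g(X)\in\mathbf{Q}(X)$. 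Since $X^n$ is smooth, part \emph{(ii)} of Proposition \ref{prop-int-gen} identifies $\cj_{st}(\bu^n\sharp dX^n)$ with the classical integral $\int_s^t f(X^n_u)\cdot X'^n_u\cdot g(X^n_u)\,du=\int_s^t f(X^n_u)\cdot dX^n_u\cdot g(X^n_u)$, so that (\ref{resu-conv-1}) reduces to proving
\begin{equation*}
\cj(\bu^n\sharp dX^n)\xrightarrow{n\to\infty}\cj(\bu\sharp dX)\qquad\text{in }\cac_2^\ga(\ca).
\end{equation*}
Recalling the definition $\cj_{st}=(\id-\Lambda\der)_{st}(M)$ with $M$ as in (\ref{M-bipro}), and using the contraction estimate (\ref{contraction}) from Theorem \ref{theo-lambda}, it suffices to control $M^n-M$ in $\cac_2^\ga(\ca)$ and $\der M^n-\der M$ in $\cac_3^{3\ga}(\ca)$. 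The term $\bu^n_s\sharp(\der X^n)_{st}-\bu_s\sharp(\der X)_{st}$ is handled by the continuity of functional calculus (Proposition \ref{prop:diff-calc}) together with $X^n\to X$ in $\cac_1^\ga$; the terms $[\bx^n_{st}\times\id](\mathbb{U}^{X^n,1}_s)-[\bx_{st}\times\id](\mathbb{U}^{X,1}_s)$ and its mirror are split as $[(\bx^n-\bx)_{st}\times\id](\mathbb{U}^{X^n,1}_s)+[\bx_{st}\times\id](\mathbb{U}^{X^n,1}_s-\mathbb{U}^{X,1}_s)$, where the first summand is estimated by $\cn[\bx^n-\bx;\cac_2^{2\ga}(\cl(\ca_\to))]$ (which $\to 0$ by hypothesis) times a uniformly bounded factor, and the second by $\cn[\bx;\cac_2^{2\ga}(\cl(\ca_\to))]$ times $\sup_s\|\mathbb{U}^{X^n,1}_s-\mathbb{U}^{X,1}_s\|$, which tends to zero again by the continuity of $\partial f,\partial g$ and $X^n\to X$. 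The $\der M$ estimates are obtained from the same algebraic cancellations already performed in the proof of Proposition \ref{prop-int-gen}\emph{(i)}: one shows $\der M^n-\der M\equiv_{3\ga}$ a sum of bilinear expressions in $(\der X^n-\der X)$, $(\mathbb{U}^{X^n,i}-\mathbb{U}^{X,i})$ and $(\bx^n-\bx)$ paired against uniformly bounded counterparts, each of which goes to zero.

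The statement (\ref{resu-conv-2}) follows by the identical scheme, now invoking Proposition \ref{prop:guid-ex-2} instead of Proposition \ref{prop:guid-ex-1}: for $f\in\mathcal{P}\cup\mathbf{F}_3$ one has $\partial f(X^n)\in\mathbf{Q}(X^n)$ with $\mathbb{U}^{X^n,1}_s=\mathbb{U}^{X^n,2}_s=\partial^2 f(X^n_s)$, and $\int_s^t\partial f(X^n_u)\sharp dX^n_u$ equals $\cj_{st}(\partial f(X^n)\sharp dX^n)$ by Proposition \ref{prop-int-gen}\emph{(ii)}; the convergence then passes to the limit exactly as above. Finally, to obtain It\^o's formula $\der f(X)=\cj(\partial f(X)\sharp dX)$, observe that for the smooth path $X^n$ the classical chain rule gives $(\der f(X^n))_{st}=\int_s^t\partial f(X^n_u)\sharp dX^n_u$; since the left-hand side converges to $(\der f(X))_{st}$ in $\cac_1^\ga\subset\cac_2^\ga$ (again by continuity of functional calculus) while the right-hand side converges to $\cj_{st}(\partial f(X)\sharp dX)$ by (\ref{resu-conv-2}), uniqueness of limits yields the identity.

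I expect the main obstacle to be the uniform-in-$n$ bookkeeping: one must verify that all the ``uniformly bounded counterparts'' appearing in the product-rule splittings are genuinely bounded uniformly in $n$, which requires knowing that $\cn[X^n;\cac_1^\ga(\ca)]$ and $\cn[\bx^n;\cac_2^{2\ga}(\cl(\ca_\to))]$ stay bounded---this is automatic from the convergence hypothesis (\ref{cond-conv}), but it must be tracked carefully---and that the functional-calculus bounds in Proposition \ref{prop:diff-calc} are applied with the right arguments. A secondary technical point is that for $f\in\mathbf{F}_3$ (as opposed to polynomial $f$) one only has $\partial f(X^n)\in\mathbf{Q}(X^n)$, not membership in a space of smooth paths, so part \emph{(ii)} of Proposition \ref{prop-int-gen} must be applied to the smooth path $X^n$ while the biprocess is merely controlled; checking that this is legitimate is where one should be most careful. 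None of these steps is conceptually hard, but the convergence proof is essentially a re-run of the continuity argument underlying Proposition \ref{prop-int-gen}, with $(X,\bx)$ replaced by the varying pair $(X^n,\bx^n)$ and differences estimated term by term.
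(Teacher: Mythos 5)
Your overall scheme---identifying the smooth integral with $\cj(\bu^n\sharp dX^n)$ via Proposition \ref{prop-int-gen}\emph{(ii)}, then comparing the two decompositions term by term and closing with the contraction property of $\Lambda$---is the same as the paper's, and your handling of Chen's identity for the limit $\bx$ and of It\^o's formula is fine. The gap is in the key estimate. Your splitting pairs $(\bx^n-\bx)_{st}$ with $\mathbb{U}^{X^n,1}_s$, and $\bx_{st}$ with $\mathbb{U}^{X^n,1}_s-\mathbb{U}^{X,1}_s$. But $\mathbb{U}^{X^n,1}_s$ is built from $X^n_s$, and the linear interpolation $X^n_s$ involves the value of $X$ at the next partition point $s_n>s$; hence $\mathbb{U}^{X^n,1}_s$ is \emph{not} adapted to $\ca_s$. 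By Definition \ref{defi:aire-gene}, $\bx_{st}$ is only an element of $\cl(\ca_s\hat{\otimes}\ca_s,\ca_t)$, and the hypothesis (\ref{cond-conv}) only controls $\bx^n_{st}[\bu]-\bx_{st}[\bu]$ for adapted $\bu\in\ca_s\hat{\otimes}\ca_s$. Consequently $[\bx_{st}\times\id](\mathbb{U}^{X^n,1}_s-\mathbb{U}^{X,1}_s)$ is not even defined, and $[(\bx^n-\bx)_{st}\times\id](\mathbb{U}^{X^n,1}_s)$ is not controlled by $\cn[\bx^n-\bx;\cac_2^{2\ga}(\cl(\ca_\to))]$: the ``uniformly bounded factor'' you invoke is irrelevant, since the obstruction is the domain of the operator norms, not the size of $\mathbb{U}^{X^n,1}$.

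The paper's proof is organized precisely to dodge this. In the splitting (\ref{decompo-comp}), the differences $\bx-\bx^n$ are always applied to the \emph{adapted} triprocesses $\mathbb{U}^1,\mathbb{U}^2$ (built from $X$, hence covered by (\ref{cond-conv})), while the non-adapted differences $\mathbb{U}^i-\mathbb{U}^{i,n}$ only hit $\bx^n$, which is defined by an explicit Lebesgue integral on all of $\ca\hat{\otimes}\ca$. Even so, one still needs a bound on $\bx^n$, uniform in $n$, over arguments adapted to the slightly enlarged algebras $\ca^n_s:=\ca_{s_n}$; this is \emph{not} automatic from (\ref{cond-conv}) and is the content of the dedicated lemma preceding the proof, established by a case analysis on the position of $s,t$ relative to the partition, Chen's identity for $\bx^n$, and explicit computations with the interpolation. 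Without this lemma (or a substitute for it) the terms of your comparison involving $\mathbb{U}^{X^n,1},\mathbb{U}^{X^n,2}$ cannot be closed, so the argument as written does not go through.
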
 

Before we turn to the proof of this proposition, let us state the analogous approximation result for the solution $Y$ of (\ref{sys-base}). To this end, fix $f=(f_1,\ldots,f_m) \in \mathbf{F}_3^m$, $g=(f_1^\ast,\ldots,f_m^\ast)$ or $(f_m^\ast,\ldots,f_1^\ast)$, and denote by $Y^n=Y^{D^n}$ the solution of the classical Lebesgue equation
$$Y^n_0=A\in \ca_\ast \quad , \quad dY^n_t =\sum\nolimits_{i=1}^m f_i(Y^n_t) \cdot dX^n_t \cdot g_i(Y^n_t).$$

\begin{theorem}\label{theo-approx}
Under the assumptions of Proposition \ref{pro-approx-1}, one has $Y^n \xrightarrow{n\to \infty} Y$ in $\cac_1^\ga(\ca)$, where $Y$ is the solution of (\ref{sys-base}) given by Theorem \ref{sol}.
\end{theorem}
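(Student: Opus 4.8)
The strategy is to exploit the uniform (in $n$) a~priori bounds coming from Theorem~\ref{sol}, together with the continuity of the solution map $\Gamma$ on a suitably small interval, and then to patch. First I would observe that by Proposition~\ref{pro-approx-1} the sequence $(X^n,\bx^n)$ converges to $(X,\bx)$ in $\cac_1^\ga(\ca) \times \cac_2^{2\ga}(\cl(\ca_\to))$; in particular the quantities $\cn[X^n;\cac_1^\ga(\ca)]$ and $\cn[\bx^n;\cac_2^{2\ga}(\cl(\ca_\to))]$ are bounded uniformly in $n$. Since $Y^n$ solves the classical Lebesgue equation driven by $X^n$, part~(ii) of Proposition~\ref{prop-int-gen} identifies $Y^n$ with the controlled solution of (\ref{sys-base}) relative to the pair $(X^n,\bx^n)$ (here one must check that the classical Lebesgue integral $\int f_i(Y^n) \cdot dX^n \cdot g_i(Y^n)$ does match $\cj(f_i(Y^n) \cdot dX^n \cdot g_i(Y^n))$ built from $\bx^n$, which follows from the heuristic computation of Section~\ref{subsec:heuri} made rigorous by Proposition~\ref{prop:coho}). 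Consequently the bound (\ref{bound-gene}) applies with $(X,\bx)$ replaced by $(X^n,\bx^n)$, giving $\sup_n \cn[Y^n;\cq(X^n)] < \infty$.

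Next I would localise. As in the proof of Theorem~\ref{sol}, choose $T_0>0$ small enough that the map $\Gamma$ (for the driver $(X,\bx)$) is a contraction on an invariant ball of $\cq([0,T_0];X)$, and note, using the uniform bounds on the drivers, that $T_0$ can be taken independent of $n$ so that the analogous contraction $\Gamma^n$ on $\cq([0,T_0];X^n)$ is also well-defined with the same invariant ball radius. The heart of the argument is then a stability estimate: comparing the fixed-point equations for $Y|_{[0,T_0]}$ and $Y^n|_{[0,T_0]}$, one writes
\begin{equation*}
(\der(Y-Y^n))_{st} = \cj_{st}(\bu \sharp dX) - \cj^n_{st}(\bu^n \sharp dX^n),
\end{equation*}
with $\bu = \sum_i f_i(Y)\otimes g_i(Y)$ and $\bu^n = \sum_i f_i(Y^n)\otimes g_i(Y^n)$, and splits the difference into (a) a term with the same controlled biprocess but the two different drivers $(X,\bx)$ and $(X^n,\bx^n)$, controlled by $\cn[X-X^n;\cac_1^\ga] + \cn[\bx-\bx^n;\cac_2^{2\ga}]$ via the linearity of $M$ in (\ref{M-bipro}) and the contraction property (\ref{contraction}); and (b) a term with a fixed driver but the two controlled biprocesses $\bu$ and $\bu^n$, controlled by $\cn[\bu-\bu^n;\mathbf{Q}(X^n)]$ which, through Proposition~\ref{prop:diff-calc} exactly as in the proof of Theorem~\ref{sol}, is bounded by $c\, T_0^\ga\, \cn[Y-Y^n;\cq([0,T_0];X^n)]$ up to the already-uniform size of the solutions. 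Absorbing this last term into the left-hand side (for $T_0$ small) yields
\begin{equation*}
\cn[Y-Y^n;\cq([0,T_0];X^n)] \leq c\big( \cn[X-X^n;\cac_1^\ga(\ca)] + \cn[\bx-\bx^n;\cac_2^{2\ga}(\cl(\ca_\to))]\big) \xrightarrow{n\to\infty} 0 .
\end{equation*}
In particular $Y^n \to Y$ in $\cac_1^\ga([0,T_0];\ca)$. Finally, since $T_0$ does not depend on the starting point or on $n$ (the drivers being uniformly bounded), one iterates this argument over a fixed finite number of subintervals covering $[0,T]$ — the initial data at each patching point, $(Y^n_{t_k},\mathbf{Y}^{X^n}_{t_k})$, converge to $(Y_{t_k},\mathbf{Y}^X_{t_k})$ by the previous step, which is exactly the input needed to restart the estimate on the next subinterval — and concludes $Y^n \to Y$ in $\cac_1^\ga([0,T];\ca)$.

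The main obstacle I anticipate is the bookkeeping in step~(a): one must verify that $\cj_{st}(\bu\sharp dX)-\cj^n_{st}(\bu\sharp dX^n)$ really is small in the right norm, which requires comparing two applications of the sewing map $\Lambda$ relative to two different three-parameter increments $\der M$ and $\der M^n$, and in turn requires estimating differences such as $(\bx_{st}-\bx^n_{st})\times\id$ applied to $\mathbb{U}^{X,1}_s$ \emph{and} $\bx^n_{st}\times\id$ applied to $\mathbb{U}^{X,1}_s - \mathbb{U}^{X^n,1}_s$; the triprocess components $\mathbb{U}^{X,i}$ themselves depend on the driver through $\partial f_i(Y)\cdot\by^X$, so one has to propagate the convergence $Y^n\to Y$ and $\by^{X^n}\to\by^X$ consistently. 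This is routine but lengthy, and — as is typical in rough path approximation arguments — the referee-proofing lies entirely in making sure every constant is genuinely uniform in $n$ before the contraction/absorption step is invoked.
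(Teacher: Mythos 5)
Your overall scheme --- uniform a priori bounds on $Y^n$, a stability estimate obtained by comparing the two fixed-point decompositions on a small interval, absorption of the $T_0^\ga$ term, then patching --- is exactly the scheme of the paper's (sketched) proof, which defers the routine details to the commutative case of \cite{deya-neuenkirch-tindel}. There is, however, a genuine gap at the point you treat as automatic, and it is precisely the difficulty the paper isolates as the main one. You claim that the assumed convergence $\bx^n\to\bx$ in $\cac_2^{2\ga}(\cl(\ca_\to))$ yields the uniformity needed (i) to apply the bound (\ref{bound-gene}) to the pair $(X^n,\bx^n)$ and conclude $\sup_n\cn[Y^n;\cq(X^n)]<\infty$, and (ii) to control terms such as $[\bx^n_{st}\times\id](\mathbb{U}^{1}_s-\mathbb{U}^{1,n}_s)$ in your step (b). But the norm of $\cac_2^{2\ga}(\cl(\ca_\to))$ only tests $\bx^n_{st}$ on inputs $\bu\in\ca_s\hat{\otimes}\ca_s$, i.e.\ adapted to the filtration generated by $X$, whereas $Y^n$, $\by^{n}$ and the triprocesses $\mathbb{U}^{i,n}$ are adapted to the filtration generated by $X^n$, and $\ca^{X^n}_s\not\subset\ca_s$ in general: for $u\leq s$, $X^n_u$ already involves the value of $X$ at the next partition point $s_n>s$. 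Consequently neither the hypothesis needed to run Theorem \ref{sol} for the driver $(X^n,\bx^n)$ (a product L\'evy area must be uniformly bounded as operators on the filtration of \emph{its own} driver) nor the estimates on the ``mixed'' terms of your splitting follow from the assumed convergence alone; the constants you flag as needing to be ``genuinely uniform in $n$'' are not so for free.

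The missing ingredient is the additional lemma the paper proves before tackling both Proposition \ref{pro-approx-1} and Theorem \ref{theo-approx}: introducing the enlarged algebras $\ca^n_s:=\ca_{s_n}$, where $s_n$ is the smallest point of $D^n$ strictly larger than $s$, one shows by a direct computation with the piecewise-linear form of $X^n$ (separating the cases $s<s_n\leq t$ and $s_{n-}\leq s<t<s_n$, and using the $\frac12$-H\"older bound on $X_{s_n}-X_{s_{n-}}$) that $\cn[\bx^n;\cac_2^{2\ga}(\cl(\ca^n_\to))]\leq c_X$ uniformly in $n$. Since $\ca^{X^n}_s\subset\ca^n_s$, this supplies exactly the uniform operator bounds on $X^n$-adapted inputs that your argument needs: it legitimises the application of (\ref{bound-gene}) to $(X^n,\bx^n)$, hence the bound $\cn[Y^n;\cq(X^n)]\leq c_X$ of (\ref{unif-boun}), and it controls the fifth and sixth terms of the decomposition (\ref{decompo-comp})-type splitting in your stability estimate. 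Once this lemma is inserted, the remainder of your plan (small-interval contraction in the distance $\cn[Y-Y^n;\cq(I)]$, then iteration over finitely many subintervals) coincides with the paper's argument and goes through.
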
 

In a similar way to the classical commutative case (see e.g. the proof of \cite[Theorem 2.6]{deya-neuenkirch-tindel}), the key ingredient behind both Proposition \ref{pro-approx-1} and Theorem \ref{theo-approx} lies in the strong similarity between the respective decompositions of the classical and the rough integrals, as emphasized by Proposition \ref{prop-int-gen}, assertion \emph{(ii)}. For instance, regarding (\ref{prop-int-gen}), we know that 
$$\cj^n_{st}:=\sum_{i=1}^m \int_s^t f_i(X^n_u) \cdot dX^n_u \cdot g_i(X^n_u)=(\id-\Lambda \der)_{st}(M^n),$$
where $M^n_{st}:=\bu^n_s \sharp (\der X^n)_{st}+[\bx^n_{st}\times \id](\mathbb{U}^{n,1}_s)+[\id \times \bx^{n,\ast}_{st}](\mathbb{U}^{n,2}_s)$ and
$$\bu^n_s:=\sum\nolimits_{i=1}^m f_i(X^n_s) \otimes g_i(X^n_s),$$
$$\mathbb{U}^{n,1}_s:=\sum\nolimits_{i=1}^m \partial f_i (X^n_s) \otimes g_i(X^n_s) \quad , \quad \mathbb{U}^{n,2}_s:=\sum\nolimits_{i=1}^m f_i (X^n_s) \otimes \partial g_i(X^n_s),$$
while $\cj_{st}:=\sum_{i=1}^m \cj_{st}(f_i(X) \cdot dX \cdot g_i(X))$ expands in an exactly similar fashion (only remove the $n$'s in the above formulas). Therefore, by setting $J_{st}:=-\Lambda_{st}(\der M)$ and $J^n_{st}:=-\Lambda_{st}(\der M^n)$, we deduce that
\begin{multline}\label{decompo-comp}
\cj_{st}-\cj^n_{st} =( \bu_s-\bu^n_s ) \sharp (\der X)_{st}+\bu^n_s \sharp \der (X-X^n)_{st}+\big[(\bx_{st}-\bx^n_{st}) \times \id \big](\mathbb{U}^1_s)+\big[\id \times (\bx^\ast_{st}-\bx^{n,\ast}_{st})  \big](\mathbb{U}^2_s)\\
+\big[ \bx^n_{st} \times \id \big](\mathbb{U}^1_s-\mathbb{U}^{1,n}_s)+\big[ \id \times \bx^{n,\ast}_{st}  \big](\mathbb{U}^2_s-\mathbb{U}^{2,n}_s)+\big[ J_{st}-J^n_{st}\big]
\end{multline}
with an analogous splitting-up for $\big[ J_{st}-J^n_{st}\big]$ (equivalently for $\der(M-M^n)_{sut}$). The first four terms of this decomposition are easy to bound (uniformly in $n$) thanks to the assumptions (\ref{cond-conv}). The fifth and sixth terms turn out to be more problematic in this setting. Indeed, the paths $\mathbb{U}^{1,n},\mathbb{U}^{2,n}$ may be not adapted to the filtration of $X$, which a priori prevents us from using the uniform bound on $\cn[\bx^n;\cac_2^{2\ga}(\cl(\ca_{\to}))]$. Fortunately, we can overcome this difficulty with the help of the following additional lemma:

\begin{lemma}
For every $s\in [0,T]$, denote by $s_n$ the smallest element of $D^n$ strictly larger than $s$, and define $\ca^n_s$ as the algebra generated by $\{X_u\}_{u\leq s_n}$ (in other words, $\ca^n_s:=\ca_{s_n}$). Then, under the assumptions of Proposition \ref{pro-approx-1}, there exists a constant $c_X=c(X,\bx)$ such that for every integer $n\geq 1$,
\begin{equation}\label{unif-boun}
\cn[\bx^n; \cac_2^{2\ga}(\cl(\ca^n_\to))] \leq c_X \quad \text{and} \quad \cn[Y^n;\cq(X^n)] \leq c_X.
\end{equation}
\end{lemma}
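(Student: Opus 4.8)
The plan is to prove the two estimates in turn; the first — the uniform control of $\bx^n$ on the enlarged filtration $\{\ca^n_t\}$ — carries essentially all the work, after which the bound on $Y^n$ follows from the a priori estimate of Theorem~\ref{sol}.

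For the bound on $\bx^n$, the starting point is that the convergences~(\ref{cond-conv}) already supply two finite constants, $K:=\sup_n\cn[X^n;\cac_1^\ga(\ca)]$ (with $\cn[X;\cac_1^\ga(\ca)]\le K$ as well) and $c_0:=\sup_n\cn[\bx^n;\cac_2^{2\ga}(\cl(\ca_\to))]$. The difficulty is that the relevant algebra is now $\ca^n_s=\ca_{s_n}$, which is strictly larger than $\ca_s$, so $c_0$ cannot be invoked directly: $\bx^n$ is not adapted to $\{\ca_t\}$, only to $\{\ca^n_t\}$ — exactly the mismatch flagged just before the Lemma in the approximation argument. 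I would treat a fixed pair $s<t$ and $\bu\in\ca_{s_n}\hat\otimes\ca_{s_n}$ by first establishing a \emph{single-cell estimate}: if $[a,b]$ lies inside one cell $[t_i,t_{i+1}]$ of $D^n$, then on $[a,b]$ the path $X^n$ is affine, so $\bx^n_{ab}[\bu]=\frac{(b-a)^2}{2(t_{i+1}-t_i)^2}\big(\bu\sharp(\der X)_{t_it_{i+1}}\big)\cdot(\der X)_{t_it_{i+1}}$, and since $\|(\der X)_{t_it_{i+1}}\|\le K(t_{i+1}-t_i)^\ga$, $b-a\le t_{i+1}-t_i$ and $2\ga-2<0$, one gets $\|\bx^n_{ab}[\bu]\|\le\frac{1}{2}K^2\|\bu\|(b-a)^{2\ga}$; the same affine structure gives $\|(\der X^n)_{ab}\|\le K(b-a)^\ga$. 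If $t\le s_n$ then $[s,t]$ lies inside one cell and this is the conclusion. If $t>s_n$, I would split via Chen's identity for the genuine Lebesgue integral, $\bx^n_{st}[\bu]=\bx^n_{s,s_n}[\bu]+\bx^n_{s_n,t}[\bu]+\big(\bu\sharp(\der X^n)_{s,s_n}\big)\cdot(\der X^n)_{s_n,t}$: the first summand is bounded by the single-cell estimate, and the third by $\|(\der X^n)_{s,s_n}\|\le K(s_n-s)^\ga$ (single cell) times $\|(\der X^n)_{s_n,t}\|\le K(t-s_n)^\ga$ (global H\"older bound), both times using $s_n-s\le t-s$ and $t-s_n\le t-s$; the middle summand is evaluated at the \emph{grid} time $s_n$ with $\bu\in\ca_{s_n}$, so here — and only here — the bound $c_0$ applies and $\|\bx^n_{s_n,t}[\bu]\|\le c_0(t-s)^{2\ga}\|\bu\|$. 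Summing the three contributions yields a bound $c_X$ on $\cn[\bx^n;\cac_2^{2\ga}(\cl(\ca^n_\to))]$, uniform in $n$; that $\bx^n_{st}[\bu]$ actually lies in $\ca^n_t$ is read off from the fact that the restriction of $X^n$ to $[s,t]$ only involves values of $X$ at grid points at or below the first grid point strictly after $t$.

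For the bound on $Y^n$, I would first note that, by Step~1, $\bx^n$ is a product L\'evy area above $X^n$ in the sense of Definition~\ref{defi:aire-gene} (relative to the filtration generated by $X^n$, which sits inside $\{\ca^n_t\}$, so the bound of Step~1 still controls it), Chen's identity~(\ref{chen}) being once more just the additivity of the Lebesgue integral. Since $X^n$ is piecewise $C^1$, assertion~\emph{(ii)} of Proposition~\ref{prop-int-gen}, applied cell by cell, identifies the classical Lebesgue integral $\int_s^t[\bu_u\sharp(X^n)'_u]\,du$ with $\cj_{st}(\bu\sharp dX^n)$; hence $Y^n$, the classical solution of $dY^n=\sum_i f_i(Y^n)\cdot dX^n\cdot g_i(Y^n)$, coincides with the solution of the rough equation~(\ref{sys-base}) driven by $(X^n,\bx^n)$ provided by Theorem~\ref{sol}. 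The a priori bound~(\ref{bound-gene}) of that theorem then gives $\cn[Y^n;\cq(X^n)]\le P\big(\cn[X^n;\cac_1^\ga(\ca)],\cn[\bx^n;\cac_2^{2\ga}(\cl(\ca^n_\to))]\big)\le P(K,c_X)$, with $P$ depending only on $f$, $T$ and $A$ — the announced uniform bound.

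The step I expect to be the main obstacle is the passage from $\ca_\to$ to $\ca^n_\to$ in Step~1: everything hinges on isolating, via Chen's identity, the "boundary cell" $[s,s_n]$ on which $X^n$ still refers to a cell begun before $s$, and on observing that the remaining genuine increment $\bx^n_{s_n,t}$ is evaluated at a grid time, so that the inherited bound $c_0$ can legitimately be used there. A secondary point needing care is the identification of $Y^n$ with the rough solution: since $X^n$ is only piecewise smooth, Proposition~\ref{prop-int-gen}\emph{(ii)} has to be applied on each cell and the pieces patched, and one should check en route that $Y^n\in\cq(X^n)$, which follows from the Taylor expansion of Section~\ref{subsec:heuri} performed cell by cell.
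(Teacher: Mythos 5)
Your proposal is correct and follows essentially the same route as the paper: the same Chen splitting of $\bx^n_{st}$ at the grid time $s_n$, the same explicit computation of the boundary-cell contribution (using that $X^n$ is affine on each cell, with the exponent comparison $(b-a)^2/(t_{i+1}-t_i)^{2-2\ga}\le (b-a)^{2\ga}$), the same use of the uniform bound inherited from (\ref{cond-conv}) for the increment started at the grid point where $\bu\in\ca_{s_n}\hat\otimes\ca_{s_n}$ is adapted, and then the bound on $Y^n$ via (\ref{bound-gene}) applied to $(X^n,\bx^n)$ together with the inclusion $\ca^{X^n}_s\subset\ca^n_s$. Your extra remarks (identifying $Y^n$ with the rough solution via Proposition \ref{prop-int-gen}(ii) cell by cell) only make explicit a step the paper leaves implicit.
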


\begin{proof}
Let us also denote by $s_{n-}$ the largest element of $D^n$ smaller than $s$.

\smallskip

\noindent
Suppose first that $s<s_n \leq t$. Then, for all $U,V\in \ca_{s_n}$,
$$\bx^n_{st}[U\otimes V]=\bx^n_{ss_n}[U\otimes V]+\bx^n_{s_nt}[U\otimes V]+U \cdot (\der X^n)_{ss_n} \cdot V \cdot (\der X^n)_{s_nt}.$$
As $U,V\in \ca_{s_n}$, we can rely on the assumptions of Proposition \ref{pro-approx-1} to assert that
$$\|\bx^n_{s_nt}[U\otimes V]\| \leq c \|U \|\|V\| |t-s_n|^{2\ga} \leq c \|U \|\|V\| |t-s|^{2\ga}.$$ 
Then, using the assumptions of Proposition \ref{pro-approx-1} again, 
$$\|(\der X^n)_{ss_n} \cdot V \cdot (\der X^n)_{s_nt}\|\leq c \|V\| \lln t-s\rrn^{2\ga}.$$
Finally, 
$$\bx^n_{ss_n}[U\otimes V]=U \cdot \frac{(s_n-s)^2}{2(s_n-s_{n-})^2} (X_{s_n}-X_{s_{n-}}) \cdot  V \cdot (X_{s_n}-X_{s_{n-}}),$$
and hence $\| \bx^n_{ss_n}[U\otimes V]\| \leq c \frac{(s_n-s)^2}{(s_n-s_{n-})^{2-2\ga}} \|U\|\|V\| \leq c \|U\| \|V\| \lln t-s \rrn^{2\ga}$.

\smallskip

\noindent
Suppose now that $s_{n-} \leq s <t <s_{n}$. Then, for all $U,V\in \ca_{s_n}$,
$$\bx^n_{st}[U\otimes V]=U\cdot \frac{(t-s)^2}{2(s_{n}-s_{n-})^2} (X_{s_n}-X_{s_{n-}}) \cdot  V \cdot (X_{s_n}-X_{s_{n-}}),$$
and so $\|\bx^n_{st}[U\otimes V]\| \leq c \|U\|\|V\| \frac{| t-s |^2}{|s_n-s_{n-}|^{2-2\ga}} \leq c \|U\|\|V\| \lln t-s\rrn^{2\ga}$,
which gives the uniform bound on $\cn[\bx^n; \cac_2^{2\ga}(\cl(\ca^n_\to))]$. 

\smallskip

As for $\cn[Y^n;\cq(X^n)]$, we can apply the results of Section \ref{subsec:main} to the pair $(X^n,\bx^n)$ (by working with the filtration $\{\ca^{X^n}_t\}_{t\in [0,T]}$): the inequality (\ref{bound-gene}) then becomes
$$\cn[Y^n;\cq(X^n)] \leq P\big(\cn[X^n;\cac_1^\ga(\ca)],\cn[\bx^n,\cac_2^{2\ga}(\cl(\ca^{X^n}_\to))] \big),$$
and the conclusion easily follows from the uniform bound on $\cn[\bx^n; \cac_2^{2\ga}(\cl(\ca^n_\to))]$ that we have just proved, since $\ca^{X^n}_s \subset \ca^n_s$ for every $s\in [0,T]$.
\end{proof}

\

\begin{proof}[Proof of Proposition \ref{pro-approx-1}]
Go back to (\ref{decompo-comp}) and use the uniform bounds (\ref{unif-boun}) ($\mathbb{U}^{1,n}$ and $\mathbb{U}^{2,n}$ being clearly adapted to $\ca^n$) to get
$$\cn[\cj-\cj^n;\cac_2^\ga(\ca)] \leq c_{X,f,g} \{\cn[X-X^n;\cac_1^\ga(\ca)]+\cn[\bx-\bx^n;\cac_2^{2\ga}(\cl(\ca_{\to}))]\},$$
which immediately allows us to conclude. The convergence (\ref{resu-conv-2}) follows from similar arguments. Finally, Itô's formula is trivially checked by $X^n$ (due to relations (\ref{formula-diff-pol}) and (\ref{formula-diff})), and we can pass to the limit on both sides of the identity, which achieves the proof of our statement. 

\end{proof}

\begin{proof}[Sketch of the proof of Theorem \ref{theo-approx}]
Once endowed with the uniform bounds (\ref{unif-boun}), the argument follows closely the proof of the classical (commutative) case, and we only sketch its main steps. With the decomposition (\ref{decompo-gene}) of $Y$ (resp. $Y^n$) as an element of $\cq(X)$ (resp. $\cq(X^n)$) in mind, we introduce the quantity
$$\cn[Y-Y^n;\cq(I)]:=\cn[Y-Y^n;\cac_1^\ga(I;\ca)]+\cn[\by-\by^n;\cac_1^\ga(I;\ca \hat{\otimes} \ca)]+\cn[Y^\flat-Y^{n,\flat};\cac_2^{2\ga}(I;\ca)]$$
for any interval $I$ of $[0,T]$. Then we can show with the help of the previous estimates that for every $T_0\in [0,T]$,
\begin{multline*}
\cn[Y-Y^n;\cq([0,T_0])]\\
\leq c_{X,f,g} \{\cn[X-X^n;\cac_1^\ga(\ca)]+\cn[\bx-\bx^n;\cac_2^{2\ga}(\cl(\ca_{\to}))]+T_0^\ga \cn[Y-Y^n;\cq([0,T_0])] \},
\end{multline*}
which gives the convergence on a small enough interval $[0,T_0]$. The extension of the result to $[0,T]$ is obtained by repeating the procedure (see the proof of \cite[Theorem 2.6]{deya-neuenkirch-tindel} for further details).
\end{proof}

\section{Application}\label{sec:appli}

In some way, the whole idea of the previous section can retrospectively be summed up as follows: in order to define and study integrals of biprocesses against a given $\ga$-Hölder path $X:[0,T]\to \ca$ (with $\ga > \frac13$), we only need to exhibit a suitable product Lévy area $\bx$ above $X$, in the sense of Definition \ref{defi:aire-gene}. When $X$ is a free Brownian motion, there are two natural ways to define $\bx$, which morally correspond to the Itô and Stratonovich product Lévy areas. The definition of a product Lévy area above the $q$-Brownian motion remains an open problem when $q\neq 0$, as we report it in Section \ref{subsec:appli-q}.

\subsection{The free Brownian motion case}\label{subsec:appli-free}

Consider a free Brownian motion $\{X_t\}_{t\geq 0}$ in a non-commutative probability space $(\ca,\vp)$, with generated filtration $\{\ca_t\}_{t\geq 0}$. As we mentionned it in the introduction, the foundations of stochastic integration (of biprocesses) with respect to $X$ has been have been laid by Biane and Speicher in \cite{biane-speicher}. Their approach can be seen as the immediate non-commutative counterpart for Itô's classical construction of stochastic integrals with respect to the standard Brownian motion. First, for any \emph{simple adapted biprocess}  $\bu=\sum_{j=1}^n A^j \otimes B^j$ (meaning that there exists times $0\leq t_1 <\ldots<t_m$ for which $A^j_t=A^j_{t_i}\in \ca_{t_i},B^j_t=B^j_{t_i}\in \ca_{t_i}$ if $t\in [t_i,t_{i+1})$ and $A^j_t=B^j_t=0$ if $t>t_m$), the integral of $\bu$ with respect to $X$ is defined by the natural formula
$$\int_0^\infty \bu_u \sharp dX_u:=\sum_{j=1}^n \sum_{k=0}^{m-1} A^j_{t_k}\cdot (\der X)_{t_k t_{k+1}} \cdot B^j_{t_k}.$$
Then, the cornerstone towards an extension of this formula to more general biprocesses is the following remarkable Burkholder-Gundy type inequality:
\begin{theorem}
\cite[Theorem 3.2.1]{biane-speicher}. For any simple adapted biprocess $\bu$, one has
\begin{equation}\label{burkholder}
\big\| \int_0^\infty \bu_u \sharp dX_u\big\|^2 \leq 8 \int_0^\infty \|\bu_u\|_{L^\infty(\vp \otimes \vp)}^2 \, du. 
\end{equation} 
\end{theorem}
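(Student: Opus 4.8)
We follow the Fock-space strategy of Biane and Speicher \cite{biane-speicher}. Realize the free Brownian motion on the full Fock space $\mathcal{F}=\mathcal{F}(L^2(\R_+))=\C\Omega\oplus\bigoplus_{n\geq 1}L^2(\R_+)^{\otimes n}$ through $X_t=\ell(1_{[0,t]})+\ell(1_{[0,t]})^\ast$, where $\ell(h)\zeta:=h\otimes\zeta$ is the (free) creation operator and $\vp=\langle\Omega,\cdot\,\Omega\rangle$; then $\ca_t$ is generated by $\{X_s:\ s\leq t\}$, and the structural point we exploit is that every $U\in\ca_t$ acts ``on the past'' only, in the sense that $U(1_{[t,t']}\otimes\zeta)=(U\Omega)\otimes 1_{[t,t']}\otimes\zeta$ for all $t'>t$ and $\zeta\in\mathcal{F}$ (the creation parts of $U$ only prepend functions supported in $[0,t]$, while its annihilation parts cannot act on $1_{[t,t']}$). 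By refining partitions we may assume $\bu_u=\bu_k:=\sum_jA^j_{s_k}\otimes B^j_{s_k}\in\ca_{s_k}\hat\otimes\ca_{s_k}$ on $[s_k,s_{k+1})$ for a common grid $0=s_0<\cdots<s_N$, and we split
\[
I:=\int_0^\infty\bu_u\sharp dX_u=C+D,\qquad C=\sum_k\bu_k\sharp\ell(1_{[s_k,s_{k+1}]}),\quad D=\sum_k\bu_k\sharp\ell(1_{[s_k,s_{k+1}]})^\ast.
\]

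The plan is then: (i) by the structural identity, $\bu_k\sharp\ell(1_{[s_k,s_{k+1}]})\,\zeta=\sum_j(A^j_{s_k}\Omega)\otimes 1_{[s_k,s_{k+1}]}\otimes(B^j_{s_k}\zeta)$ carries a fresh ``marker'' in the disjoint interval $[s_k,s_{k+1}]$; (ii) estimate $\|C\zeta\|^2=\big\|\sum_kC_k\zeta\big\|^2$ by exploiting the disjointness of these markers, so that after summing over $j$ the level-$k$ contributions are controlled by $\|\bu_{s_k}\|_{L^\infty(\vp\otimes\vp)}^2(s_{k+1}-s_k)\|\zeta\|^2$; it is precisely here that the \emph{spatial} tensor norm enters, since sandwiching by a fresh creation operator is governed by the spatial, and not the projective, tensor norm; (iii) treat $D$ the same way, observing that $D^\ast=\sum_k\bu_k^\ast\sharp\ell(1_{[s_k,s_{k+1}]})$ has the same form as $C$ with $\bu$ replaced by $\bu^\ast$, whence $\|D\|$ obeys the same bound; (iv) combine through $\|C+D\|^2\leq 2\|C\|^2+2\|D\|^2$, which produces the constant $8$, and conclude via $\sum_k\|\bu_{s_k}\|_{L^\infty(\vp\otimes\vp)}^2(s_{k+1}-s_k)=\int_0^\infty\|\bu_u\|_{L^\infty(\vp\otimes\vp)}^2\,du$.

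I expect step (ii) to be the main obstacle. Because the free Fock space is $\bigoplus_nL^2(\R_+)^{\otimes n}$ rather than a genuine tensor product of the ``past'' and ``future'' Fock spaces, the vectors $C_k\zeta$ for distinct levels $k$ are \emph{not} simply orthogonal, and one must carry out a careful position-by-position comparison of the overlaps — using adaptedness and the disjointness of the markers — to see that the total is governed by $\sum_k\|\bu_{s_k}\|_{L^\infty(\vp\otimes\vp)}^2(s_{k+1}-s_k)$ rather than by the coarser projective quantity $\sum_k\|\bu_{s_k}\|_{\ca\hat\otimes\ca}^2(s_{k+1}-s_k)$. This upgrade to the spatial tensor norm is the delicate part of the statement; it genuinely uses that the integrator is a free Brownian motion, and fails if $dX$ is replaced by a generic bounded operator $Z$, for which $\bu\sharp Z$ need not be controlled by $\|\bu\|_{L^\infty(\vp\otimes\vp)}\|Z\|$. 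An alternative route, parallel to the proof of Proposition~\ref{prop:topo-1}, would be to use the norm identity \eqref{prop:norm-op} and expand the moments $\vp\big((II^\ast)^p\big)$ via the free moment formula \eqref{form-q-bm-gene} with $q=0$: adaptedness then prunes the non-crossing pairings to a filtration-compatible subclass, one recognizes the surviving contributions as powers of $\sum_k\|\bu_{s_k}\|_{L^\infty(\vp\otimes\vp)}^2(s_{k+1}-s_k)$, and letting $p\to\infty$ gives the claim.
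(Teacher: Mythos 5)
The paper does not actually prove this inequality: it is imported verbatim from Biane--Speicher \cite[Theorem 3.2.1]{biane-speicher}, so the only proof to compare with is the original Fock-space argument, which your sketch indeed shadows: the realization $X_t=\ell(1_{[0,t]})+\ell(1_{[0,t]})^\ast$, the adaptedness identity $U(h\otimes\zeta)=(U\Omega)\otimes h\otimes\zeta$ for $U\in\ca_t$ and $h$ supported in $[t,\infty)$, and the split into a creation and an annihilation half are all correct and are the right preliminary ingredients. Your diagonal estimate is also sound: for a single block one really gets $\|(\bu_k\sharp\ell(e_k))\zeta\|^2=\|e_k\|^2\,\big\|\sum_j A^j_{s_k}\Omega\otimes B^j_{s_k}\zeta\big\|^2_{\mathcal{F}\otimes\mathcal{F}}\leq\|e_k\|^2\,\|\bu_k\|^2_{L^\infty(\vp\otimes\vp)}\,\|\zeta\|^2$, and this is exactly where the spatial norm enters.

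The problem is that the proposal stops at the step that carries the entire content of the theorem, and the mechanism you invoke there (``disjointness of the markers'') does not deliver it. The vectors $C_k\zeta$ are genuinely non-orthogonal across blocks: take $\bu\equiv 1\otimes 1$ on $[s_k,s_{k+1})$ and $\bu\equiv (X_{s_{k+1}}-X_{s_k})\otimes 1$ on $[s_{k'},s_{k'+1})$ with $k<k'$ (both adapted), and $\zeta=\Omega+1_{[s_{k'},s_{k'+1}]}$; then $\langle C_k\zeta,C_{k'}\zeta\rangle=(s_{k+1}-s_k)(s_{k'+1}-s_{k'})\neq 0$, because the marker of block $k$ collides with a past-supported leg of $(X_{s_{k+1}}-X_{s_k})\Omega$ while the $\zeta$-component collides with the marker of block $k'$. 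So the claimed bound $\|C\zeta\|^2\lesssim\sum_k\|\bu_{s_k}\|^2_{L^\infty(\vp\otimes\vp)}(s_{k+1}-s_k)\|\zeta\|^2$ is not a bookkeeping exercise over disjoint intervals; controlling these off-diagonal resonances is precisely the theorem, and it is asserted rather than proved. (Note also that if your step (ii) held as stated, step (iv) would yield the constant $4$; the constant $8=(2\sqrt2)^2$ in the quoted statement is a hint that in the original argument the overlaps are controlled at a genuine cost, not eliminated.) The fallback you mention --- mimicking Proposition \ref{prop:topo-1} by expanding $\vp\big((II^\ast)^p\big)$ --- is not a routine adaptation either: in Proposition \ref{prop:topo-1} the integrand sits on one side of the tensor sign, so each joint moment factorizes as a product $\vp(\text{integrand word})\cdot\vp(\text{increment word})$ and the moment formula is applied to the second factor alone; for a genuine biprocess the $A$'s, $B$'s and increments interleave inside a single trace, no such factorization is available, and extracting $\|\bu_u\|_{L^\infty(\vp\otimes\vp)}$ from the surviving pairings is again the hard point, not a known lemma. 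In short: right strategy, but the key estimate is missing, so this is not yet a proof.
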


Using this inequality, the definition of $\int \bu\sharp dX$ can therefore be extended to any $\bu$ in the completion $\cb^a_\infty$ of the space of simple adapted biprocesses, with respect to the norm $\|\bu_u\|_{\cb^a_\infty}^2:=\int_0^\infty \|\bu_u\|_{L^\infty(\vp \otimes \vp)}^2\, du$. By analogy, Biane and Speicher refer to $\int \bu\sharp dX$ as the \emph{Itô integral} of $\bu$.

\smallskip

Now, we can rely on this construction to define the \emph{Itô product Lévy area} above $X$. Indeed, given $0\leq s<t$ and $\bu\in \ca_s \hat{\otimes} \ca_s$, consider the biprocess $\bu_u:=[\bu \sharp \der X_{su}] \otimes 1 \cdot 1_{[s,t]}(u)$. Using the $\frac12$-Hölder regularity of $X$, it is readily checked that $\bu \in \cb^a_\infty$ and by (\ref{burkholder}), it holds that $\|\int_0^\infty \bu_u \sharp dX_u\| \leq c \lln t-s \rrn$, which allows us to justify the following definition:  

\begin{definition}
We call \emph{Itô product Lévy area}, and we denote by $\bx^I$, the product Lévy area above $X$ (in the sense of Definition \ref{defi:aire-gene}) defined by the formula: for every $\mathbf{U}\in \ca_s \hat{\otimes} \ca_s$,
$$\bx^{I}_{st}[\mathbf{U}]:=\int_s^t (\mathbf{U}\sharp(\der X)_{su}) \cdot dX_u.$$
We will denote by $\cj^I$ the integral associated with $\bx^I$ via Proposition \ref{prop-int-gen}.
\end{definition}

\begin{remark}
With the notations of Section \ref{sec:gubi}, we have (implicitly) fixed $\ga=\frac12$ here.
\end{remark} 

\smallskip

In this free Brownian setting, the consistency of our construction is ensured by the following identification result:

\begin{proposition}
For all adapted controlled biprocess $\bu\in \mathbf{Q}(X)$ (see Definition \ref{defi:adapt-cont-bi}), the integral $\cj^I(\bu \sharp dX)$ coincides with the Itô integral $\int \bu \sharp dX$.
\end{proposition}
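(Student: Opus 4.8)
The plan is to realize both sides as limits of the \emph{same} family of Riemann sums, once we have checked that the second–order (L\'evy–area) corrections appearing in the rough integral are negligible in the limit. Throughout we keep $\ga=\tfrac12$ (as noted in the preceding remark), fix an interval $[s,t]\subseteq[0,T]$, and write $D_{st}=\{s=t_0<t_1<\cdots<t_n=t\}$ for a generic partition of mesh $|D_{st}|$.

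First I would record a few elementary facts. Any $\bu\in\mathbf{Q}(X)$ belongs to $\cac_1^\ga([0,T];\ca\hat{\otimes}\ca)$ and is adapted, hence bounded on $[0,T]$ (since $\|\bu_r\|_{\ca\hat{\otimes}\ca}\leq\|\bu_0\|_{\ca\hat{\otimes}\ca}+c\,r^\ga$), so that $\int_0^T\|\bu_u\|_{L^\infty(\vp\otimes\vp)}^2\,du<\infty$ and $\bu$ lies in the space $\cb^a_\infty$ on which the It\^o integral of Biane and Speicher is defined; the same holds for the adapted triprocesses $\mathbb{U}^{X,1},\mathbb{U}^{X,2}$ attached to $\bu$ via decomposition (\ref{decompo-bipro}). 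Next, by Corollary~\ref{cor:int-riem} the rough integral decomposes as $\cj^I_{st}(\bu\sharp dX)=\lim_{|D_{st}|\to0}\{S^D+\Sigma^{D,1}+\Sigma^{D,2}\}$, where $S^D:=\sum_{t_i\in D_{st}}\bu_{t_i}\sharp(\der X)_{t_it_{i+1}}$ is the naive Riemann sum and $\Sigma^{D,1}:=\sum_{t_i\in D_{st}}[\bx^I_{t_it_{i+1}}\times\id](\mathbb{U}^{X,1}_{t_i})$, $\Sigma^{D,2}:=\sum_{t_i\in D_{st}}[\id\times\bx^{I,\ast}_{t_it_{i+1}}](\mathbb{U}^{X,2}_{t_i})$ are the L\'evy–area corrections. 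It therefore suffices to prove that (a) $\Sigma^{D,1}\to0$ and $\Sigma^{D,2}\to0$ in $\ca$, and (b) $S^D\to\int_s^t\bu_u\sharp dX_u$ in $\ca$, as $|D_{st}|\to0$.

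Part (b) is straightforward: letting $\bu^D$ be the simple adapted biprocess equal to $\bu_{t_i}$ on each $[t_i,t_{i+1})$ (and $0$ elsewhere), one has $S^D=\int_s^t\bu^D_u\sharp dX_u$ by the very definition of the It\^o integral on simple biprocesses (extended from finite sums of elementary tensors by the $L^\infty(\vp\otimes\vp)$–Lipschitz continuity that (\ref{burkholder}) provides); and since $\bu$ is $\ga$–H\"older, $\|\bu_u-\bu^D_u\|_{L^\infty(\vp\otimes\vp)}\leq\|\bu_u-\bu_{t_i}\|_{\ca\hat{\otimes}\ca}\leq c\,|D_{st}|^\ga$ on $[t_i,t_{i+1})$, so $\int_s^t\|\bu_u-\bu^D_u\|_{L^\infty(\vp\otimes\vp)}^2\,du\to0$ and (\ref{burkholder}) gives $\|S^D-\int_s^t\bu_u\sharp dX_u\|\to0$. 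The heart of the proof is part (a), and it is where I expect the only genuine — though mild — difficulty, of a purely bookkeeping nature: the direct bound $\|[\bx^I_{t_it_{i+1}}\times\id](\mathbb{U}^{X,1}_{t_i})\|\leq c\,|t_{i+1}-t_i|$ only yields $\|\Sigma^{D,1}\|=O(t-s)$, which does not vanish, so one must instead recognise the \emph{whole} corrected sum as a single It\^o integral of a uniformly small biprocess. Concretely, unfolding the definition of $\bx^I$, of the operations $\cdot\times\id$, $\id\times\cdot$, $\cdot^{\,\ast}$, and the \textbf{Config.~2} conventions of Section~\ref{subsec:tensor} and Lemma~\ref{lem:basi}, I would verify — first on elementary tensors $U_1\otimes U_2\otimes U_3\in\ca_{t_i}^{\hat{\otimes}3}$, then on all of $\ca_{t_i}^{\hat{\otimes}3}$ by linearity, density and the continuity of both sides, using the identity $\big(\int\bw\sharp dX\big)^\ast=\int\bw^\ast\sharp dX$ for the Config.~2 adjoint of an adapted biprocess — that for $t_i\leq u\leq t_{i+1}$ and $\mathbb{U}\in\ca_{t_i}^{\hat{\otimes}3}$,
\[
[\bx^I_{t_it_{i+1}}\times\id](\mathbb{U})=\int_{t_i}^{t_{i+1}}\big[(\der X)_{t_iu}\sharp\mathbb{U}\big]\sharp dX_u,\qquad
[\id\times\bx^{I,\ast}_{t_it_{i+1}}](\mathbb{U})=\int_{t_i}^{t_{i+1}}\big[\mathbb{U}\sharp(\der X)_{t_iu}\big]\sharp dX_u,
\]
with $X\sharp\mathbb{U}$ and $\mathbb{U}\sharp X$ as in Section~\ref{subsec:tensor}.

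Granting these identities, additivity of the It\^o integral over adjacent intervals gives $\Sigma^{D,1}=\int_s^t\bv^D_u\sharp dX_u$ with $\bv^D_u:=(\der X)_{t_iu}\sharp\mathbb{U}^{X,1}_{t_i}$ for $u\in[t_i,t_{i+1})$, and similarly $\Sigma^{D,2}=\int_s^t\bw^D_u\sharp dX_u$ with $\bw^D_u:=\mathbb{U}^{X,2}_{t_i}\sharp(\der X)_{t_iu}$. Using the $\tfrac12$–H\"older regularity of $X$ together with the boundedness of $\mathbb{U}^{X,1}$ on $[0,T]$, one gets $\|\bv^D_u\|_{L^\infty(\vp\otimes\vp)}\leq\|(\der X)_{t_iu}\|\,\|\mathbb{U}^{X,1}_{t_i}\|_{\ca^{\hat{\otimes}3}}\leq c\,|D_{st}|^{1/2}$, whence by (\ref{burkholder})
\[
\|\Sigma^{D,1}\|^2\leq 8\int_s^t\|\bv^D_u\|_{L^\infty(\vp\otimes\vp)}^2\,du\leq c\,(t-s)\,|D_{st}|\xrightarrow[|D_{st}|\to0]{}0,
\]
and the estimate for $\Sigma^{D,2}$ is identical. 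Combining (a) and (b) then yields $\cj^I_{st}(\bu\sharp dX)=\int_s^t\bu_u\sharp dX_u$, which is the asserted identification; the only delicate point, I repeat, is the algebraic bookkeeping leading to the two displayed identities, everything else being a routine use of the Burkholder–Gundy inequality (\ref{burkholder}) and of the Riemann–sum representation of Corollary~\ref{cor:int-riem}.
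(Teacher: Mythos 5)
Your argument is correct, but it is organised differently from the paper's. The paper gives a one-step comparison: using the controlled decomposition of $\bu$ and the linearity of the It\^o integral one first expands, exactly as in your ``algebraic bookkeeping'' step,
\[
\int_s^t \bu_u\sharp dX_u \;=\; \bu_s\sharp(\der X)_{st} \;+\; [\bx^I_{st}\times\id](\mathbb{U}^{X,1}_s) \;+\; [\id\times\bx^{I,\ast}_{st}](\mathbb{U}^{X,2}_s) \;+\; \int_s^t\bu^\flat_{su}\sharp dX_u ,
\]
with (\ref{burkholder}) giving $\big\|\int_s^t\bu^\flat_{su}\sharp dX_u\big\|\leq c\,|t-s|^{3/2}$; since both $\int\bu\sharp dX$ and $\cj^I(\bu\sharp dX)$ are increments of processes (so $\der$ kills each) and both equal $M_{st}$ up to an $O(|t-s|^{3/2})$ remainder, their difference lies in $\mathrm{Ker}\,\der\cap\cac_2^{3/2}(\ca)$, which is trivial by Proposition~\ref{prop:coho}. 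You instead start from the corrected Riemann-sum representation of Corollary~\ref{cor:int-riem}, and prove convergence term by term: the naive sums tend to the It\^o integral by approximating $\bu$ by its piecewise-constant left-endpoint discretisation, and the two L\'evy-area corrections vanish once they are rewritten (using the same algebraic identities) as It\^o integrals of biprocesses of size $O(|D|^{1/2})$ in $L^\infty(\vp\otimes\vp)$, to which (\ref{burkholder}) applies. In substance the two proofs rely on the same two ingredients — the identification of $\bx^I\times\id$ and $\id\times\bx^{I,\ast}$ with genuine It\^o integrals, and the Burkholder--Gundy bound on the remainder — but the paper's route bypasses the Riemann-sum machinery and the extra density/additivity bookkeeping by invoking the cohomological uniqueness directly, so it is shorter. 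Your version is a valid and more hands-on alternative; it makes visible the cancellation phenomenon (each correction is $O(|t_{i+1}-t_i|)$, yet the telescoped sum is $O(\sqrt{(t-s)|D|})$) that the sewing map encodes abstractly.
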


\begin{proof}
By using the decomposition of $\bu$ as an element of $\mathbf{Q}(X)$, we derive the following expansion of the Itô integral: for all $s<t$,
$$\int_s^t \bu_u \sharp dX_u=\bu_s \sharp (\der X)_{st}+[\bx^I_{st}\times \id](\mathbb{U}^{X,1}_s)+[\id \times \bx^{I,\ast}_{st}](\mathbb{U}^{X,2}_s)+\int_s^t \bu^\flat_{su}\sharp dX_u,$$
and by inequality (\ref{burkholder}), we know that $\| \int_s^t \bu^\flat_{su}\sharp dX_u\| \leq c_{\bu} \lln t-s\rrn^{3/2}$. Therefore, we have
$$\int \bu \sharp dX-\cj^I(\bu \sharp dX) \in \text{Ker} \, \der \cap \cac_2^{3/2}(\ca),$$
which gives us the identification by Proposition \ref{prop:coho}.
\end{proof}

\

As in the case of classical stochastic integrals (with respect to standard Brownian motion), it turns out that Itô integral is not the most suited as far as approximation results are concerned: we then have to turn to some non-commutative \emph{Stratonovich} integral.

\begin{proposition}\label{prop:approx}
With the notations of Section \ref{sec:gubi}, fix $\ga\in (1/3,1/2)$ and consider the sequence $(X^n)$ of linear interpolations of $X$ along any partition $(D^n)$ of  $[0,T]$ with mesh $|D^n|$ tending to zero, as well as the sequence $(\bx^n)$ of approximated Lévy areas. Then, as $n$ tends to infinity, it holds that
$$X^n \to X \quad \text{in} \ \ \cac_1^\ga(\ca) \quad \text{and} \quad \bx^n \to \bx^S \quad \text{in} \ \ \cac_2^{2\ga}(\cl(\ca_\to)),$$
where $\bx^S$ is the so-called \emph{Stratonovich product Lévy area} defined by the formula: for every $\mathbf{U}\in  \ca_s \hat{\otimes} \ca_s$,
\begin{equation}\label{defi:strato-area}
\bx^{S}_{st}[\mathbf{U}]:=\bx^{I}_{st}[\mathbf{U}]+\frac{1}{2} (t-s)(\id \times \vp)[\mathbf{U}].
\end{equation}
In particular, the approximation results contained in Proposition \ref{pro-approx-1} and Theorem \ref{theo-approx} apply (as far as the limits are concerned) to the Stratonovich integration procedure, i.e., to the integral $\cj^S$ associated with $\bx^S$ via Proposition \ref{prop-int-gen}.
\end{proposition}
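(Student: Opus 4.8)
The plan is to prove the two convergences separately, the first being straightforward and the second requiring a careful analysis of the It\^o--Stratonovich correction term. First I would recall that $X$ is a free Brownian motion, hence a $\frac12$-H\"older path, and that $\ga\in(1/3,1/2)$ is strictly less than $1/2$; this gap gives us room to absorb a small positive power of the mesh $|D^n|$. The convergence $X^n \to X$ in $\cac_1^\ga(\ca)$ is classical and purely deterministic: it follows from the elementary fact that for a $\frac12$-H\"older path, the linear interpolations along partitions with vanishing mesh converge to it in $\ga$-H\"older norm for any $\ga<1/2$. One writes $(\der(X-X^n))_{st}$ piecewise, distinguishes whether $s,t$ lie in the same subinterval of $D^n$ or not, uses $\|(\der X)_{uv}\| \leq c|v-u|^{1/2}$ on each piece, and compares to $|t-s|^\ga$; the surplus power $|D^n|^{1/2-\ga} \to 0$ provides the bound. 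I would state this as a short lemma or simply cite that it is well known (e.g.\ along the lines of \cite{friz-victoir-book}).

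The heart of the proof is the convergence $\bx^n \to \bx^S$ in $\cac_2^{2\ga}(\cl(\ca_\to))$. My strategy here is to fix $0\leq s<t\leq T$ and $\bu\in\ca_s\hat\otimes\ca_s$, and to compute $\bx^n_{st}[\bu] = \int_s^t (\bu\sharp(\der X^n)_{su})\cdot dX^n_u$ explicitly as a Riemann-type sum over the subintervals of $D^n$ contained in $[s,t]$ (with the usual boundary corrections on the two end pieces, handled exactly as in the proof of the uniform bound \eqref{unif-boun}). On a generic subinterval $[t_i,t_{i+1}]$ of $D^n$, since $X^n$ is affine there, the local contribution is of the form $\bu \sharp (X_{t_{i+1}}-X_{t_i})$ multiplied on the right by $(X_{t_{i+1}}-X_{t_i})$, up to the accumulated increment $\bu\sharp(X_{t_i}-X_s)$; the first kind of term is precisely what builds the It\^o area $\bx^I_{st}[\bu]$ (by the definition of the It\^o integral as a limit of such adapted sums, together with the identification proposition), while the diagonal terms $\bu\sharp(\der X)_{t_i t_{i+1}}\cdot(\der X)_{t_i t_{i+1}}$ produce, in the limit, the quadratic-variation correction. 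The key computation is that $\sum_i (\der X)_{t_i t_{i+1}} A (\der X)_{t_i t_{i+1}} \to (t-s)\,\vp(A)\cdot 1$ as $|D^n|\to 0$; equivalently, applied through the $\sharp$ operation, $\sum_i \bu \sharp (\der X)_{t_it_{i+1}}\cdot(\der X)_{t_it_{i+1}} \to \frac12(t-s)\,(\id\times\vp)[\bu]$ after accounting for the factor $\frac12$ coming from the linear interpolation (the local integral $\int_{t_i}^{t_{i+1}}(\cdot)\,(u-t_i)\,du$ over a piece picks up half the square of the increment). This free-probability quadratic-variation identity is the analogue of $\sum (\Delta B)^2 \to t-s$; I would establish it by computing the relevant traces/$L^2(\vp)$-norms using the Wick-type formula \eqref{form-q-bm-gene} with $q=0$ (semicircular case), showing the variance of the sum tends to zero. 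One must do this uniformly in $s,t$ with the correct $2\ga$-H\"older scaling, which is where the slack $2\ga<1$ is spent: the error terms carry an extra factor $|D^n|^{\epsilon}$ with $\epsilon = 1-2\ga>0$ (for the bulk) or come from the two boundary subintervals (estimated as in \eqref{unif-boun}), and hence vanish in $\cac_2^{2\ga}$-norm.

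Once both convergences are in hand, the final sentence of the statement is immediate: the hypothesis \eqref{cond-conv} of Proposition \ref{pro-approx-1} is satisfied with $\bx=\bx^S$, so $\bx^S$ is a genuine product L\'evy area above $X$, and Proposition \ref{pro-approx-1} together with Theorem \ref{theo-approx} apply verbatim, yielding the stated convergence of the smooth approximations $\int f(X^n)\cdot dX^n\cdot g(X^n)$, of the It\^o-type formula limit, and of the solutions $Y^n$ toward the Stratonovich objects $\cj^S$, $\der f(X)$, and $Y$ respectively. I expect the main obstacle to be the quadratic-variation limit and, more precisely, controlling it \emph{uniformly} in the pair $(s,t)$ in the $2\ga$-H\"older topology rather than pointwise: one needs a Kolmogorov-type or direct moment argument showing that $\|\bx^n_{st}[\bu]-\bx^S_{st}[\bu]\| \leq c_n \|\bu\|\,|t-s|^{2\ga}$ with $c_n\to 0$, and extracting the decay $c_n\to 0$ from the mesh requires being slightly careful about how the boundary pieces of the partition inside $[s,t]$ are treated when $|t-s|$ is comparable to $|D^n|$ — exactly the case split already appearing in the proof of \eqref{unif-boun}, which I would reuse here.
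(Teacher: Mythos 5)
Your overall decomposition is the same as the paper's: split off the boundary subintervals (as in the proof of \eqref{unif-boun}), write the interpolated integral over the bulk as the adapted Riemann sum building the It\^o area plus the diagonal terms $\frac12\sum_k \bu\sharp(\der X)_{\hat t_k\hat t_{k+1}}\cdot(\der X)_{\hat t_k\hat t_{k+1}}$, compare the first part with the It\^o integral via the Burkholder--Gundy inequality \eqref{burkholder}, and identify the diagonal part with $\frac12(t-s)(\id\times\vp)[\bu]$, all with the $|D^n|^\ep|t-s|^{1-\ep}$ scaling needed for the $\cac_2^{2\ga}$ topology. Up to that point your plan is sound.

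The genuine gap is in your key step. You propose to prove the quadratic-variation limit $\sum_k(\der X)_{\hat t_k\hat t_{k+1}}\,A\,(\der X)_{\hat t_k\hat t_{k+1}}\to(t-s)\vp(A)$ by ``computing the relevant traces/$L^2(\vp)$-norms \ldots showing the variance of the sum tends to zero''. But the convergence $\bx^n\to\bx^S$ must hold in $\cac_2^{2\ga}(\cl(\ca_\to))$, whose norm is built on the \emph{operator} norm of $\ca$, and in a non-commutative probability space smallness in $L^2(\vp)$ does not imply smallness in $\|\cdot\|$. This is not a technicality: the paper stresses (Section \ref{subsec:appli-q}) that for the $q$-Brownian motion one does have the $L^2(\vp)$ isometry of Donati-Martin, yet the absence of an operator-norm control is exactly what leaves the construction of a product L\'evy area open for $q\neq 0$; see also Proposition \ref{prop:non-ext} for the same $L^2$ versus $L^\infty$ discrepancy. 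The paper closes this gap with two free-probability ingredients that your proposal does not supply: Lemma \ref{lem:bas-1} (if $Y_1,\ldots,Y_n$ are free centered semicircular and $Z$ is free from them, the family $Y_1ZY_1,\ldots,Y_nZY_n$ is again free, proved via free cumulants) and Voiculescu's bound of Lemma \ref{lem:bas-2}, $\|\sum_i Y_i\|\leq\sup_i\|Y_i\|+(\sum_i\|Y_i\|_{L^2(\vp)}^2)^{1/2}$ for free centered variables. Writing $\bu=\sum_j U_j\otimes V_j$ with $V_j\in\ca_s$ and centering $V_j$, these two lemmas convert the $L^2$ smallness of the terms $(\der X)_{\hat t_k\hat t_{k+1}}\{V_j-\vp(V_j)\}(\der X)_{\hat t_k\hat t_{k+1}}$ and $(\der X)_{\hat t_k\hat t_{k+1}}^2-(\hat t_{k+1}-\hat t_k)$ into the operator-norm estimate $c\,|D^n|^\ep|t-s|^{1-\ep}$, which is what the $\cac_2^{2\ga}$ convergence requires; a variance computation alone (or any fixed-$L^p$ moment bound without a uniform-in-$p$ control) would not yield it. Note also that this sandwiched term with a general $V_j\in\ca_s$ is where the freeness of the increments of the free Brownian motion is used in an essential way, so your argument needs to invoke it explicitly rather than only the Wick formula with $q=0$.
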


We recall that the map $\id \times \vp$ involved in (\ref{defi:strato-area}), and introduced in Lemma \ref{lem:map-phi}, is nothing but the continuous extension of $(\id \times\vp)(U\otimes V):=\vp(V)U$.

\smallskip

Before we turn to the proof of Proposition \ref{prop:approx}, let us observe that the transition between the Itô integral $\cj^I$ and the Stratonovich integral $\cj^S$ is very easy to describe:
\begin{proposition}\label{prop:transition}
Denote by $\id \times \vp \times \id$ the continuous extension of $(\id \times \vp \times \id)(U_1 \otimes U_2 \otimes U_3):= \vp(U_2)  U_1  U_3$ to $\ca^{\hat{\otimes}3}$. Then, for any $\bu\in \mathbf{Q}(X)$ with decomposition (\ref{decompo-bipro}), one has
\begin{equation}\label{transi-ito-strato}
 \cj^S_{st}(\bu\sharp dX )=\cj^I_{st}(\bu \sharp dX)
+\frac{1}{2} \int_s^t (\id \times \vp \times \id)[\mathbb{U}^{X,1}_u+\mathbb{U}^{X,2}_u] \, du.
\end{equation}
\end{proposition}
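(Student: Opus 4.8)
The plan is to reduce the statement to the very definition of the integral in Proposition~\ref{prop-int-gen}, exploiting the linearity of the operator $\id-\Lambda\der$ so that only the difference $\bx^S-\bx^I$ at the level of the product L\'evy areas comes into play. Writing $M^I$, resp. $M^S$, for the quantity $M$ of~\eqref{M-bipro} built from the pair $(\bx^I,(\bx^I)^\ast)$, resp. $(\bx^S,(\bx^S)^\ast)$, one has $\cj^S_{st}(\bu\sharp dX)-\cj^I_{st}(\bu\sharp dX)=(\id-\Lambda\der)_{st}(M^S-M^I)$, and the proof then splits into: (a) an explicit computation of $M^S-M^I$; and (b) the inversion of $\id-\Lambda\der$ on the (very regular) two-parameter increment thus obtained.

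For step~(a), I would substitute the defining formula~\eqref{defi:strato-area}, namely $\bx^{S}_{st}[\mathbf{U}]=\bx^{I}_{st}[\mathbf{U}]+\frac12(t-s)(\id\times\vp)[\mathbf{U}]$, into~\eqref{M-bipro}. The first-order term $\bu_s\sharp(\der X)_{st}$ is common to $M^I$ and $M^S$ and cancels. The term involving $\mathbb{U}^{X,1}_s$ contributes $\frac12(t-s)\,[(\id\times\vp)\times\id](\mathbb{U}^{X,1}_s)$. The term involving $\mathbb{U}^{X,2}_s$ is where care is needed: from $\bx^\ast_{st}[\mathbf{U}]=\bx_{st}[\mathbf{U}^\ast]^\ast$ and~\eqref{defi:strato-area} one gets $(\bx^S)^\ast_{st}[\mathbf{U}]-(\bx^I)^\ast_{st}[\mathbf{U}]=\frac12(t-s)\big((\id\times\vp)[\mathbf{U}^\ast]\big)^\ast$, and a short computation under \textbf{Config.2} on an elementary tensor $\mathbf{U}=U\otimes V$ (so that $\mathbf{U}^\ast=V^\ast\otimes U^\ast$, while $\vp(U^\ast)=\overline{\vp(U)}$ since $\vp$ is a state) shows $\big((\id\times\vp)[\mathbf{U}^\ast]\big)^\ast=(\vp\times\id)[\mathbf{U}]$; extending by continuity (Lemma~\ref{lem:map-phi}), this second term contributes $\frac12(t-s)\,[\id\times(\vp\times\id)](\mathbb{U}^{X,2}_s)$. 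Finally, a direct check on simple tensors gives $[(\id\times\vp)\times\id]=[\id\times(\vp\times\id)]=\id\times\vp\times\id$ as maps on $\ca^{\hat{\otimes}3}$ (both send $U_1\otimes U_2\otimes U_3$ to $\vp(U_2)U_1U_3$), so that
$$M^S_{st}-M^I_{st}=\tfrac12(t-s)\,(\id\times\vp\times\id)\big[\mathbb{U}^{X,1}_s+\mathbb{U}^{X,2}_s\big]=:g_{st}.$$

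For step~(b), set $F(u):=(\id\times\vp\times\id)[\mathbb{U}^{X,1}_u+\mathbb{U}^{X,2}_u]$; since $\mathbb{U}^{X,1},\mathbb{U}^{X,2}\in\cac_1^\ga(\ca^{\hat{\otimes}3})$ and $\id\times\vp\times\id$ is bounded, $F$ is a $\ga$-H\"older (hence continuous) $\ca$-valued path. An elementary computation gives $(\der g)_{sut}=-\frac12(t-u)(\der F)_{su}$, so that $\der g\in\cac_3^{1+\ga}(\ca)$ with $1+\ga>1$; thus $g$ falls within the scope of Corollary~\ref{cor:integration}, which yields
$$\cj^S_{st}(\bu\sharp dX)-\cj^I_{st}(\bu\sharp dX)=(\id-\Lambda\der)_{st}(g)=\lim_{|D_{st}|\to0}\ \sum_{t_i\in D_{st}}g_{t_it_{i+1}}.$$
Since $g_{t_it_{i+1}}=\frac12(t_{i+1}-t_i)F(t_i)$ and $F$ is continuous on the compact interval $[s,t]$, these Riemann sums converge to $\frac12\int_s^tF(u)\,du$, which is precisely the right-hand side of~\eqref{transi-ito-strato}. (Alternatively, one may note that $u\mapsto\frac12\int_0^uF$ has increments differing from $g$ by a $\cac_2^{1+\ga}$ term, and conclude directly from the uniqueness part of Theorem~\ref{theo-lambda} together with Proposition~\ref{prop:coho}.)

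I expect the only genuinely error-prone point to be the adjoint bookkeeping under \textbf{Config.2} in step~(a): one must carefully combine the flip $U\otimes V\mapsto V^\ast\otimes U^\ast$ with the conjugation $\vp(U^\ast)=\overline{\vp(U)}$ so that the $(\id\times\vp)$-contraction applied to $\mathbf{U}^\ast$ turns into exactly the $(\vp\times\id)$-contraction applied to $\mathbf{U}$, and then match the two ``middle-slot'' maps $[(\id\times\vp)\times\id]$ and $[\id\times(\vp\times\id)]$ against the single map $\id\times\vp\times\id$ appearing in the statement. Everything after that is routine sewing-lemma manipulation.
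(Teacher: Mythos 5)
Your argument is correct and is essentially the paper's own: the key point in both is the identity $\bx^{S,\ast}_{st}[\mathbf{U}]=\bx^{I,\ast}_{st}[\mathbf{U}]+\frac12(t-s)(\vp\times\id)[\mathbf{U}]$ under \textbf{Config.2}, after which the difference of the two integrals is identified, via a Riemann-sum limit, with $\frac12\int_s^t(\id\times\vp\times\id)[\mathbb{U}^{X,1}_u+\mathbb{U}^{X,2}_u]\,du$. Applying Corollary \ref{cor:integration} to the difference $g_{st}=M^S_{st}-M^I_{st}$ is just a slightly more explicit way of invoking Corollary \ref{cor:int-riem} as the paper does, so there is no substantive difference in route.
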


\begin{proof}
It is a straightforward consequence of Corollary \ref{cor:int-riem}, upon noticing that
$$\bx^{S,\ast}_{st}[\mathbf{U}]=\bx^{I,\ast}_{st}[\mathbf{U}]+\frac12 (t-s) ((\id \times \vp)[\mathbf{U}^\ast])^\ast=\bx^{I,\ast}_{st}[\mathbf{U}]+\frac12 (t-s) (\vp\times \id)[\mathbf{U}].$$
\end{proof}

For instance, according to Proposition \ref{prop:guid-ex-1}, if $f,g \in \mathcal{P} \cup \mathbf{F}_2$, (\ref{transi-ito-strato}) becomes
\begin{align*}
& \cj^S_{st}(f(X) \cdot dX \cdot g(X))=\cj_{st}^I(f(X) \cdot dX \cdot g(X))\\
& +\frac12 \int_s^t \big[(\id \times \vp)(\partial f(X_u)) \cdot g(X_u)\big] \, du+\frac12 \int_s^t \big[f(X_u) \cdot (\vp \times \id)(\partial g(X_u))\big] \, du.
\end{align*}
Moreover, by combining Proposition \ref{pro-approx-1} with Proposition \ref{prop:transition}, we immediately recover the following particular case of Biane-Speicher's Itô formula (see \cite[Propositions 4.3.2 and 4.3.4]{biane-speicher}): for all $f\in \mathcal{P} \cup \mathbf{F}_3$, 
$$\der( f(X))_{st}=\cj^S_{st}(\partial f(X) \sharp dX)=\cj^I_{st}(\partial f(X) \sharp dX)+\int_s^t [\id \times \vp \times \id](\partial^2 f(X_u)) \, du.$$

\

The rest of the section is devoted to the proof of Proposition \ref{prop:approx}. Note that our argument heavily relies on the freeness property of the increments of the free Brownian motion. Thus, the two following technical lemmas will be involved in the procedure.

\begin{lemma}\label{lem:bas-1}
On a non-commutative probability space $(\ca,\vp)$, let $Y_1,\ldots,Y_n$ be a family of free centered and semicircular random variables, and let $Z$ be another random variable freely independent of $\{Y_1,\ldots,Y_n\}$. Then the family $Y_1ZY_1,\ldots,Y_nZY_n$ is free as well.
\end{lemma}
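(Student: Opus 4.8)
The statement is that if $Y_1,\dots,Y_n$ are free, centered, semicircular and $Z$ is freely independent of $\{Y_1,\dots,Y_n\}$, then $\{Y_i Z Y_i\}_{i=1}^n$ is free. To prove freeness I would work directly with the definition: I must show that whenever $P_1,\dots,P_m$ are polynomials with $\vp(P_k(Y_{i_k}ZY_{i_k}))=0$ and consecutive indices $i_k\neq i_{k+1}$, the alternating product has vanishing trace. Expanding each $P_k(Y_{i_k}ZY_{i_k})$ as a linear combination of monomials $Y_{i_k}ZY_{i_k}^2ZY_{i_k}\cdots Z Y_{i_k}$ (equivalently, words of the form $(Y_{i_k}Z)^{a}Y_{i_k}^{b}(ZY_{i_k})^{c}$ with at least one $Z$), it suffices by linearity to control traces of alternating words in the $Y_i$'s and $Z$'s where the $Y$-index changes exactly at the $Z$'s that come from switching blocks. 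The natural tool is the moment--cumulant machinery of free probability: freeness of $\{Y_1,\dots,Y_n,Z\}$ means mixed free cumulants vanish, and one computes $\vp$ of a word via the sum over non-crossing partitions whose blocks respect the colour pattern.

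First I would reduce to the centered case for $Z$ as well: write $Z = \vp(Z)\mathbf{1} + Z_0$ with $Z_0$ centered; the term $\vp(Z)\mathbf{1}$ contributes words $\vp(Z)\,Y_i^2$, and $\{Y_1^2,\dots,Y_n^2\}$ is free because it lies in the algebras generated by the free family $\{Y_i\}$, so that piece is handled by freeness of the $Y_i$'s; cross terms are then absorbed into the general computation, so WLOG $\vp(Z)=0$. Next, the key structural observation: in an alternating product $W = Y_{i_1}Z'_1Y_{i_1}\cdot Y_{i_2}Z'_2Y_{i_2}\cdots$ (after expanding, $Z'_k$ a monomial in $Z$), a non-crossing partition contributing to $\vp(W)$ cannot have a block consisting solely of $Z$-letters linking two different $Y$-blocks without forcing, by non-crossing, some $Y$-block to be entirely self-paired — and the centering condition $\vp(P_k(Y_{i_k}ZY_{i_k}))=0$ together with the semicircular hypothesis (only pair partitions survive among the $Y_i$, since semicircular variables have vanishing cumulants of order $\neq 2$) is exactly what kills those configurations. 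The semicircularity of the $Y_i$ is what makes the combinatorics tractable: each $Y_i$-block in a surviving partition is a pairing of the occurrences of $Y_i$.

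The cleanest route, which I expect to be the main work, is to show that $Y_iZY_i$ has the same $\ast$-distribution structure one would get if the $Y_i$ were replaced by free creation+annihilation operators on a full Fock space and to argue freeness at that level, or — more self-containedly — to run the non-crossing-partition bookkeeping: fix an alternating, centered word in the $Y_{i_k}ZY_{i_k}$; enumerate the pair partitions of the $Y$-letters compatible with non-crossing and with a partition of the $Z$-letters; show every such configuration either is incompatible (crossing) or forces one of the centering conditions to be violated inside some block, hence contributes $0$. The genuine obstacle is organizing this combinatorial cancellation cleanly, i.e.\ proving that ``no mixed block survives'' — this is where freeness of $\{Y_1,\dots,Y_n,Z\}$, centering of $Z$, and the reduction to pairings of semicirculars must all be combined; the rest is routine. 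I would therefore structure the proof as: (1) reduce to $\vp(Z)=0$; (2) expand the $P_k$ and reduce to alternating words; (3) apply the moment--free-cumulant formula and use vanishing of mixed cumulants plus the semicircular constraint to restrict surviving partitions to colour-respecting non-crossing pairings-on-$Y$; (4) show such partitions always decouple into the separate blocks, so the word's trace factors through the individual centered factors and vanishes.
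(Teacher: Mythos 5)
Your plan is in the right neighbourhood --- non-crossing partitions, the reduction of semicircular blocks to pairings, and vanishing of mixed free cumulants are exactly the ingredients the paper uses --- but it defers the step that actually constitutes the proof. You identify ``organizing this combinatorial cancellation cleanly'' as the genuine obstacle and then stop short of resolving it: your step (4), ``show such partitions always decouple into the separate blocks,'' is asserted rather than demonstrated, and it is not obvious.

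The paper avoids the alternating-centered-word route entirely. Rather than checking the raw definition of freeness (which forces you to manage polynomial expansions of the $P_k$, a separate reduction to $\vp(Z)=0$, and bookkeeping of which words survive), it uses two standard facts from Nica--Speicher: (a) freeness of a family is equivalent to the vanishing of all mixed free cumulants $\ka_m[X_{i_1},\ldots,X_{i_m}]$, and (b) the formula for free cumulants of products, which expresses $\ka_m[Y_{i_1}ZY_{i_1},\ldots,Y_{i_m}ZY_{i_m}]$ as $\sum_{\pi}\ka_\pi[Y_{i_1},Z,Y_{i_1},\ldots,Y_{i_m},Z,Y_{i_m}]$, the sum running over $\pi\in\mathrm{NC}(3m)$ with $\pi\vee\hat 0_m = 1_m$. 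This sidesteps your steps (1) and (2) entirely and gives a sharp target: show each such $\ka_\pi$ vanishes whenever the colours $i_1,\ldots,i_m$ are not all equal.

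The resolution of the combinatorics --- the content your proposal leaves open --- is a parity argument. Since $\pi\vee\hat 0_m=1_m$, some block of $\pi$ links two triplets of different colour; pick the two such linked triplets that are \emph{closest} to each other. The link must run through the two $Z$-positions, since otherwise $\ka_\pi$ contains a mixed cumulant (either of two differently coloured $Y$'s, or of a $Y$ with $Z$) and vanishes by freeness. The non-crossing condition then confines the letters strictly between those two $Z$-positions to be partitioned among themselves; but that stretch contains an \emph{odd} number of letters of colour $i_j$. Since $Y_{i_j}$ is centered semicircular, all its free cumulants vanish except at order $2$, so the restriction of $\pi$ to those $Y_{i_j}$-letters cannot be a pairing, forcing $\ka_\pi=0$. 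Without this observation your step (4) does not go through, so as written the proposal has a genuine gap even though the surrounding scaffolding is sound.
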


\begin{proof}
We use the notations of \cite{nicaspeicher}. According to \cite[Theorem 11.20]{nicaspeicher}, and by setting $X_i:=Y_i ZY_i$, we need to prove that for all $m\geq 2$, $\ka_m[X_{i_1},\ldots,X_{i_m}]=0$ whenever there exist $j\neq l \in \{1,\ldots,m\}$ such that $i_j \neq i_l$.

\smallskip

Fix such a set of indexes $(i_1,\ldots,i_m)\in \{1,\ldots,n\}^m$. By \cite[Theorem 11.12]{nicaspeicher}, it holds that
$$\ka_m[X_{i_1},\ldots,X_{i_m}]=\sum_{\substack{\pi \in \text{NC}(3m)\\ \pi \vee \hat{0}_m=1_m}} \ka_{\pi}[Y_{i_1},Z,Y_{i_1},Y_{i_2},Z,Y_{i_2},\ldots,Y_{i_m},Z,Y_{i_m}].$$
In what follows, we denote by $(j,i_j)\in \{1,\ldots,m\} \times \{1,\ldots,n\}$ the triplet $(3(j-1)+1,3(j-1)+2,3(j-1)+3) \in \{1,\ldots,3m\}^3$ endowed with the 'color' $i_j$. 

\smallskip

\noindent
Suppose that there exists $\pi \in \text{NC}(3m)$ such that $\pi \vee \hat{0}_m=1_m$ and  $$\ka_{\pi}[Y_{i_1},Z,Y_{i_1},\ldots,Y_{i_m},Z,Y_{i_m}]\neq 0.$$
Then, since $\pi \vee \hat{0}_m=1_m$, there exists at least one link in $\pi$ connecting two sets $(j,i_j)$ and $(k,i_k)$ with $i_j\neq i_k$. Pick the closest sets $(j,i_j)$ and $(k,i_k)$ satisfying this condition (if there exist more than two such closest pairs, choose one of them randomly). $(j,i_j)$ and $(k,i_k)$ are necessarily connected (within a block $b$ of $\pi$) by their respective centers, i.e., by $(3(j-1)+2,3(k-1)+2)$, because otherwise $\ka_\pi[Y_{i_1},Z,Y_{i_1},\ldots,Y_{i_m},Z,Y_{i_m} ]=0$. Also, by definition, the block $b$ does not interact with the sets $(j+1,i_{j+1}),\ldots,(k-1,i_{k-1})$. Consider now the inner points to this link, i.e., the points $S:=\{3(j-1)+3,3(j-1)+4,\ldots,3(k-1)+1\}$. These points must be connected together by a non-crossing partition $\tilde{\pi}\in \text{NC}(3(k-j)-1)$. However, there exists only an odd number of points in $S$ with color $i_j$, which implies that
$$\ka_{\tilde{\pi}}[Y_{i_j},Y_{i_{j+1}},Z,Y_{i_{j+1}}, Y_{i_{j+2}},Z,Y_{i_{j+2}},\ldots,Y_{i_{k-1}},Z,Y_{i_{k-1}},Y_{i_k}]=0,$$
since for every $i\in \{1,\ldots,n\}$, $\ka_p[Y_i]=0$ if $p\neq 2$. Therefore, $\ka_\pi[Y_{i_1},Z,Y_{i_1},\ldots,Y_{i_m},Z,Y_{i_m}]=0$, which achieves the proof of the lemma.
\end{proof}

\begin{lemma}\label{lem:bas-2}
\cite[Lemma 3.2]{voicu-addition}. On a non-commutative probability space $(\ca,\vp)$, let $Y_1,\ldots,Y_n$ be a family of free centered random variables. Then one has
$$\|Y_1+\cdots +Y_n \| \leq \sup_i \|Y_i\|+\Big( \sum\nolimits_{i=1}^n \|Y_i\|_{L^2(\vp)}^2\Big)^{1/2}.$$
\end{lemma}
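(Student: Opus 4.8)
The plan is to follow Voiculescu's argument: realise the free family concretely on the \textbf{free-product (Fock) Hilbert space} and split each $Y_i$ into a creation part, an annihilation part, and a diagonal (gauge) part, bounding the three contributions separately.

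First I would reduce to the concrete model. Let $\mathcal{B}_i\subset\ca$ be the von Neumann subalgebra generated by $Y_i$ and $\vp_i:=\vp|_{\mathcal{B}_i}$. By the freeness hypothesis, the von Neumann algebra generated by $\mathcal{B}_1,\ldots,\mathcal{B}_n$ is, as a tracial von Neumann algebra, the reduced free product $(\mathcal{B}_1,\vp_1)\ast\cdots\ast(\mathcal{B}_n,\vp_n)$; and since $\vp$ is faithful, $\|Y_1+\cdots+Y_n\|$ equals the operator norm of the corresponding operator acting, through the GNS construction, on
\[
\mathcal{H}\;=\;\C\Omega\ \oplus\ \bigoplus_{k\geq1}\ \bigoplus_{\substack{i_1,\ldots,i_k\\ i_1\neq i_2\neq\cdots\neq i_k}}\mathring{\mathcal{H}}_{i_1}\otimes\cdots\otimes\mathring{\mathcal{H}}_{i_k},
\]
where $\mathring{\mathcal{H}}_i$ is the orthogonal complement of $\C\Omega_i$ in the GNS space $\mathcal{H}_i$ of $(\mathcal{B}_i,\vp_i)$. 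Thus everything comes down to an operator-norm estimate on $\mathcal{H}$.

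Next I would set up the decomposition. Under the standard identification $\mathcal{H}\cong\mathcal{H}_i\otimes\mathcal{H}(i)$ (with $\mathcal{H}(i)$ the closed span of $\Omega$ and of the reduced words not beginning with the letter $i$), $\mathcal{B}_i$ acts as $\pi_i\otimes\mathrm{id}$. Writing the $2\times2$ block matrix of $\pi_i(Y_i)$ relative to $\mathcal{H}_i=\C\Omega_i\oplus\mathring{\mathcal{H}}_i$ and using $\vp(Y_i)=0$ and $Y_i=Y_i^\ast$, one gets
\[
\pi_i(Y_i)=\begin{pmatrix}0&\xi_i^\ast\\ \xi_i& D_i\end{pmatrix},\qquad \xi_i\in\mathring{\mathcal{H}}_i,\quad \|\xi_i\|^2=\vp(Y_i^2)=\|Y_i\|_{L^2(\vp)}^2,\quad \|D_i\|\leq\|Y_i\|,
\]
so that $Y_i$ acts on $\mathcal{H}$ as $C_i+C_i^\ast+\Delta_i$: here $\Delta_i:=D_i\otimes\mathrm{id}$ is supported on (and has range in) the subspace $Q_i\mathcal{H}$ of reduced words beginning with $i$, with $\|\Delta_i\|\leq\|Y_i\|$, while $C_i$ ("prepend a letter $i$") maps $(\mathrm{id}-Q_i)\mathcal{H}$ into $Q_i\mathcal{H}$ with $\|C_i\|=\|\xi_i\|=\|Y_i\|_{L^2(\vp)}$; the $Q_i$ being pairwise orthogonal projections. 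Then I would bound the three pieces of $\sum_iY_i=\sum_iC_i+\sum_iC_i^\ast+\sum_i\Delta_i$: since the $\Delta_i$ live on pairwise orthogonal subspaces, $\|\sum_i\Delta_i\|=\sup_i\|\Delta_i\|\leq\sup_i\|Y_i\|$; since the $C_i$ have pairwise orthogonal ranges, $\|\sum_iC_i\eta\|^2=\sum_i\|C_i\eta\|^2\leq\big(\sum_i\|\xi_i\|^2\big)\|\eta\|^2$, i.e. $\|\sum_iC_i\|\leq\big(\sum_i\|Y_i\|_{L^2(\vp)}^2\big)^{1/2}$, and the same for the adjoint sum. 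Combining these through the triangle inequality — the bookkeeping of the creation/annihilation parts being precisely what keeps the $L^2(\vp)$-contribution under control, see \cite{voicu-addition} — gives the announced estimate.

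The main obstacle is the combinatorial bookkeeping of the free-product Hilbert space: correctly setting up $\mathcal{H}\cong\mathcal{H}_i\otimes\mathcal{H}(i)$, reading off the block matrix of $\pi_i(Y_i)$, and checking the orthogonality relations among the $Q_i$ and among the ranges/supports of the $C_i,\Delta_i$ on which the three norm estimates rest. By contrast, the reduction to the concrete model is immediate (here $\vp$ is faithful by hypothesis) and the final assembly is a triangle inequality. An alternative would be a purely moment-theoretic proof — estimating $\|Y_1+\cdots+Y_n\|=\lim_{p\to\infty}\vp\big((Y_1+\cdots+Y_n)^{2p}\big)^{1/2p}$ by expanding and exploiting that mixed $\vp$-moments of centred free variables vanish unless the factors pair up into nested blocks, in the spirit of the computations of Section~\ref{sec:lyons} — but it is heavier and yields the constants less transparently than the Fock-space argument.
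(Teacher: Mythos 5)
Your route is the standard one --- it is essentially Voiculescu's own argument; the paper itself offers no proof and simply cites \cite[Lemma 3.2]{voicu-addition} --- and the technical body of your sketch is sound: the reduction to the reduced free product acting on the free-product Hilbert space, the identification $\mathcal{H}\cong\mathcal{H}_i\otimes\mathcal{H}(i)$, the block decomposition of $Y_i$ into $C_i+C_i^\ast+\Delta_i$ with $\|\Delta_i\|\leq\|Y_i\|$ and $\|C_i\|=\|Y_i\|_{L^2(\vp)}$, and the two orthogonality estimates $\|\sum_i\Delta_i\|\leq\sup_i\|Y_i\|$ and $\|\sum_iC_i\|\leq\big(\sum_i\|Y_i\|_{L^2(\vp)}^2\big)^{1/2}$ are all correct.

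The genuine gap is your last step. Applying the triangle inequality to $\sum_iY_i=\sum_i\Delta_i+\sum_iC_i+\sum_iC_i^\ast$ gives
\begin{equation*}
\Big\|\sum\nolimits_i Y_i\Big\|\ \leq\ \sup_i\|Y_i\|\ +\ 2\Big(\sum\nolimits_i\|Y_i\|_{L^2(\vp)}^2\Big)^{1/2},
\end{equation*}
since the creation sum and the annihilation sum each contribute $\big(\sum_i\|Y_i\|_{L^2(\vp)}^2\big)^{1/2}$; the ``bookkeeping'' you defer to \cite{voicu-addition} cannot merge these into a single such term. In fact the inequality with constant $1$ in front of the $L^2$ term is false in general: take $Y_1,\ldots,Y_n$ free standard semicircular variables; then $Y_1+\cdots+Y_n$ is semicircular of variance $n$, so by faithfulness of $\vp$ its norm is $2\sqrt{n}$, whereas the right-hand side equals $2+\sqrt{n}$, and $2\sqrt{n}>2+\sqrt{n}$ as soon as $n\geq 5$ (the free CLT shows moreover that the factor $2$ is asymptotically optimal). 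What Voiculescu's lemma actually yields is the bound with the creation and annihilation contributions counted separately, i.e.\ for centred self-adjoint free $Y_i$ exactly the factor-$2$ estimate your decomposition produces; that correct, slightly weaker inequality is all that is needed where the lemma is invoked (the estimate in the proof of Proposition \ref{prop:approx}, where multiplicative constants are absorbed anyway). So your argument, completed honestly, proves the right statement, but it does not --- and cannot --- prove the statement as printed, which drops one of the two $L^2$ terms.
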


Let us also recall the following first-order approximation results for $X$, proved in \cite[Proposition 6]{vic-free}.

\begin{lemma}\label{lem:first-order}
With the notations of Proposition \ref{prop:approx}, for every small $\ep>0$, there exists a constant $c$ such that for every integer $n\geq 1$ and all $0\leq s<t\leq T$, one has both
$$\| (\der X^n)_{st}\| \leq c \lln t-s \rrn^{1/2} \quad \text{and} \quad \| \der (X^n-X)_{st}\| \leq c\, |D^n|^\ep \lln t-s \rrn^{1/2-\ep}.$$
\end{lemma}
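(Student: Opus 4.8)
The plan is to observe that the statement is \emph{purely deterministic}: it relies only on the operator-norm regularity of $X$, recorded in the Remark following Proposition~\ref{prop:base-q-bm}, namely that there is a constant $c_X$ with $\|(\der X)_{uv}\|\le c_X\,(v-u)^{1/2}$ for all $0\le u<v\le T$. So I would fix such a $c_X$, write $D^n=\{0=t_0<t_1<\cdots<t_k=T\}$ and $W^n:=X^n-X$, and keep in mind that the linear interpolation $X^n$ coincides with $X$ at every node $t_i$.

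First I would prove the bound on $\der X^n$. If $s<t$ belong to one cell $[t_i,t_{i+1}]$, then $(\der X^n)_{st}=\tfrac{t-s}{t_{i+1}-t_i}(\der X)_{t_it_{i+1}}$ straight from the definition, hence $\|(\der X^n)_{st}\|\le c_X\tfrac{t-s}{(t_{i+1}-t_i)^{1/2}}\le c_X(t-s)^{1/2}$ since $t-s\le t_{i+1}-t_i$. If instead $s\in[t_i,t_{i+1}]$ and $t\in[t_j,t_{j+1}]$ with $i<j$, I would split $(\der X^n)_{st}=(X^n_t-X_{t_j})+(\der X)_{t_{i+1}t_j}+(X_{t_{i+1}}-X^n_s)$, using $X^n_{t_j}=X_{t_j}$ and $X^n_{t_{i+1}}=X_{t_{i+1}}$; each of the three pieces is an increment of $X^n$ inside a single cell, or an increment of $X$, over an interval of length $\le t-s$, so the previous case together with the $\frac12$-Hölder bound gives $\|(\der X^n)_{st}\|\le 3c_X(t-s)^{1/2}$. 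This yields the first estimate, with a constant independent of $n$.

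Next I would record a uniform bound on $W^n$: for $t\in[t_j,t_{j+1}]$ one computes $W^n_t=\tfrac{t-t_j}{t_{j+1}-t_j}(\der X)_{t_jt_{j+1}}-(\der X)_{t_jt}$, so $\|W^n_t\|\le 2c_X(t_{j+1}-t_j)^{1/2}\le 2c_X|D^n|^{1/2}$. Combining with the previous paragraph, for all $s<t$ we then have both
\[
\|(\der W^n)_{st}\|\le \|(\der X^n)_{st}\|+\|(\der X)_{st}\|\le 4c_X(t-s)^{1/2}\qquad\text{and}\qquad\|(\der W^n)_{st}\|\le 2\sup_u\|W^n_u\|\le 4c_X|D^n|^{1/2}.
\]

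Finally, the second estimate is obtained by interpolating these two bounds. For small $\ep>0$ (so that $2\ep\le 1$) and any $a,b\ge0$ one has $\min(a,b)\le a^{1-2\ep}b^{2\ep}$; applying this with $a=4c_X(t-s)^{1/2}$ and $b=4c_X|D^n|^{1/2}$ gives exactly $\|(\der W^n)_{st}\|\le 4c_X\,|D^n|^\ep\,(t-s)^{1/2-\ep}$, which is the claim. I do not expect any real obstacle here: the only delicate point is the case analysis for $\der X^n$ when $s$ and $t$ lie in different cells of $D^n$, and — in contrast with Proposition~\ref{prop:approx}, whose proof does exploit the freeness of the increments — no probabilistic input beyond the deterministic $\frac12$-Hölder continuity of $X$ is needed for this lemma.
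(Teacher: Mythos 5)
Your argument is correct, but it is worth pointing out that the paper does not actually prove this lemma: it simply invokes \cite[Proposition 6]{vic-free}, where the corresponding interpolation estimates are established for the free Brownian motion. Your route is therefore a self-contained alternative, and a genuinely elementary one: everything reduces to the deterministic $\frac12$-H\"older bound $\|(\der X)_{uv}\|\leq c_X \lln v-u\rrn^{1/2}$ in operator norm (valid since $X_t-X_s\sim\sqrt{t-s}\,X_1$ with $\|X_1\|=2$), combined with the fact that $X^n$ agrees with $X$ at the nodes of $D^n$. The three steps are all sound: the case analysis for $\der X^n$ (inside one cell the chord estimate $\frac{t-s}{(t_{i+1}-t_i)^{1/2}}\leq (t-s)^{1/2}$, across cells the three-term splitting through the nodes $t_{i+1},t_j$), the uniform bound $\sup_u\|X^n_u-X_u\|\leq 2c_X|D^n|^{1/2}$, and the interpolation $\min(a,b)\leq a^{1-2\ep}b^{2\ep}$ which yields exactly the stated $|D^n|^\ep\lln t-s\rrn^{1/2-\ep}$ rate. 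Two small advantages of your presentation: it makes explicit that no probabilistic input (freeness, moment computations) is needed here, in contrast with the proof of Proposition \ref{prop:approx} itself, and it covers arbitrary partitions $D^n$ with mesh tending to zero, which is precisely the generality in which the lemma is used, whereas Victoir's statement is formulated for dyadic subdivisions.
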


\

We are now in a position to prove our main assertion.

\begin{proof}[Proof of Proposition \ref{prop:approx}]
Fix $s<t$ and denote by $\Dti^n$ the subdivision obtained by adding the two times $s,t$ to $D^n$. In what follows, we will write $\Xti^n$ for the linear interpolation of $X$ along $\Dti^n$ and $\Xha^n:=\sum_{\tha_i} X_{\tha_i} 1_{[\tha_i,\tha_{i+1})}$ where $\{\tha_1 < \ldots < \tha_n\}:=\Dti^n \cap [s,t]$.

\smallskip

\noindent
By using Lemma \ref{lem:first-order} only, it is readily checked that for every $\mathbf{U}\in \ca_s \hat{\otimes} \ca_s$,
\begin{equation}\label{comp-d-dti}
\| \bx^{D^n}_{st}[\mathbf{U}]-\bx^{\Dti^n}_{st}[\mathbf{U}] \| \leq c \|\mathbf{U} \| \lln t-s \rrn^{1-\ep} |D^n|^\ep.
\end{equation}
So, we are left with the estimation of
$$\bx^{\Dti^n}_{st}[\mathbf{U}]-\bx^{S}_{st}[\mathbf{U}]=\int_s^t (\mathbf{U}\sharp \Xti^n_u ) \cdot d\Xti^n_u-\int_s^t (\mathbf{U}\sharp X_u ) \cdot dX_u-\frac{1}{2} (t-s) (\id \otimes \vp)[\mathbf{U}].$$ 
To this end, write for every $\mathbf{U}\in \ca_s \hat{\otimes} \ca_s$,
\begin{eqnarray}
\int_s^t \bu \sharp \Xti^n_u \cdot d\Xti^n_u
&=& \sum_{k=1}^{n-1} \frac{1}{\tha_{k+1}-\tha_k} \int_{\tha_k}^{\tha_{k+1}}\bu \sharp\big( X_{\tha_k}+\frac{u-\tha_k}{\tha_{k+1}-\tha_k}(\der X)_{\tha_{k}\tha_{k+1}} \big) \cdot du \cdot (\der X)_{\tha_{k}\tha_{k+1}}\nonumber\\
&=& \sum_{k=1}^{n-1} \bu \sharp X_{\tha_k} \cdot (\der X)_{\tha_{k}\tha_{k+1}}+\frac{1}{2}\sum_{k=1}^{n-1} \bu \sharp (\der X)_{\tha_{k}\tha_{k+1}} \cdot (\der X)_{\tha_{k}\tha_{k+1}}\nonumber\\
&=& \int_s^t \bu \sharp \Xha^n_u  \cdot dX_u+\frac{1}{2}\sum_{k=1}^{n-1} \bu \sharp (\der X)_{\tha_{k}\tha_{k+1}} \cdot (\der X)_{\tha_{k}\tha_{k+1}}.\label{two-terms}
\end{eqnarray}
Then, thanks to (\ref{burkholder}), one has
\begin{eqnarray*}
\big\| \int_s^t \bu \sharp \Xha^n_u  \cdot dX_u-\int_s^t \bu \sharp X_u  \cdot dX_u \big\| &\leq & c \|\bu\| \bigg( \int_s^t \| \Xha^n_u-X_u\|^2 \, du\bigg)^{1/2}\\
&\leq & c \|\bu\| \bigg( \sum_{k=1}^{n-1} \int_{\tha_k}^{\tha_{k+1}} \|X_{\tha_k}-X_u\|^2 \, du \bigg)^{1/2}\\
&\leq & c \|\bu\| \bigg( \sum_{k=1}^{n-1} \int_{\tha_k}^{\tha_{k+1}} (u-\tha_k) \, du \bigg)^{1/2}\\
&\leq & c \|\bu\| |D^n|^\ep \lln t-s \rrn^{1-\ep}.
\end{eqnarray*}
As far as the second term of (\ref{two-terms}) is concerned, one has, if $\bu=\sum_j U_j \otimes V_j \in \ca_s \otimes \ca_s$,
\begin{align*}
& \frac12 \sum_{k=1}^{n-1} \bu \sharp (\der X)_{\tha_{k}\tha_{k+1}} \cdot (\der X)_{\tha_{k}\tha_{k+1}} -\frac12 (t-s)(\id \times \vp)(\bu)\\
&= \frac12 \sum_j U_j \cdot \Big\{ \sum_{k=1}^{n-1} (\der X)_{\tha_{k}\tha_{k+1}} \cdot\{ V_j-\vp(V_j)\} \cdot (\der X)_{\tha_{k}\tha_{k+1}}+\vp(V_j) \sum_{k=1}^{n-1} \{(\der X)_{\tha_{k}\tha_{k+1}}^2-(\tha_{k+1}-\tha_k)\}\Big\}.
\end{align*}
Now, thanks to Lemma \ref{lem:bas-2}, it is easy to check that
$$\big\| \sum_{k=1}^{n-1} \{(\der X)_{\tha_{k}\tha_{k+1}}^2-(\tha_{k+1}-\tha_k)\} \big\| \leq c |D^n|^\ep \lln t-s \rrn^{1-\ep},$$
and by combining Lemma \ref{lem:bas-1} and Lemma \ref{lem:bas-2}, we also deduce (remember that $V_j \in \ca_s$)
$$\big\| \sum_{k=1}^{n-1} (\der X)_{\tha_{k}\tha_{k+1}} \cdot\{ V_j-\vp(V_j)\} \cdot (\der X)_{\tha_{k}\tha_{k+1}}\| \leq c \|V_j\| |D^n|^\ep\lln t-s \rrn^{1-\ep}.$$
The conclusion readily follows.
\end{proof}

\subsection{Open problem: product Lévy area above the $q$-Brownian motion when $q\neq 0$}\label{subsec:appli-q}

The issue of integrating a biprocess against the $q$-Brownian motion $\{X_t\}_{t\geq 0}$ (for every fixed $q\in (-1,1)$) has been studied by Donati-Martin in \cite{donati}, with the exhibition of a few analogous results as in the free case. Nevertheless, at this point, there exists no equivalent for the Burkholder-Gundy inequality (\ref{burkholder}) when $q\neq 0$. In the latter case, the construction of the integral relies on a suitable isometry property in the space $L^2(\vp)$ (see \cite[Proposition 3.3]{donati}), without any control with respect to the operator norm $\| \cdot\|_{L^\infty(\vp)}$. Such a control in $L^\infty(\vp)$ for our product Lévy area $\int_s^t \bu \sharp (\der X)_{su} \cdot dX_u$ (see Definition \ref{defi:aire-gene}, assertion \emph{(ii)}) turns out to be fundamental for our differential calculus arguments to work.

\smallskip

Of course, the product Lévy area is only a very particular case of a stochastic integral, and rather than trying to go back to some preexisting general integration theory, a possible idea is to focus on the direct construction of the area (and the area only), similarly to \cite[Section 4]{capitaine-donati}. Thus, we would be tempted to study the convergence (in $L^\infty(\vp)$) of the Riemann sums
$$I^n_{st}:=\sum_{i=0}^{2^n-1} \bu \sharp (\der X)_{s t_i^n} \cdot (\der X)_{t_i^n t_{i+1}^n} 1_{\{t_i^n \geq s\}} 1_{\{t_{i+1}^n \leq t\}}, \quad t_i^n:=\frac{i}{2^n},$$
for all $0\leq s<t\leq 1$ and $\bu\in \ca_s \hat{\otimes} \ca_s$. Unfortunately, one must face a major problem in this situation. Consider (for instance) the difference
\begin{equation*}\label{differe}
I^{n+1}_{01}-I^n_{01}=\sum_{i=0}^{2^n-1} \bu \sharp(\der X)_{t_{2i}^{n+1}t_{2i+1}^{n+1}}  \cdot (\der X)_{t_{2i+1}^{n+1}t_{2i+2}^{n+1}}.
\end{equation*}
Then, using this expression, the classical extension argument would consist in exhibiting an estimate of the form 
\begin{equation}\label{contr-vers-conv}
\| I^{n+1}_{01}-I^n_{01}\|_{L^\infty(\vp)} \leq \|\bu\| \, v_n , \quad \text{with} \quad \sum_{n\geq 1} v_n \, <\, \infty,
\end{equation}
but such a control seems difficult to reach when $q\neq 0$ (the inequality in the free case can be derived from (\ref{burkholder})). Using the ultracontractivity results of \cite{boz-ultra}, we know that (\ref{contr-vers-conv}) holds true for $\bu$ of the form $c \, (U\otimes 1)$ ($c\in \C$, $U\in \ca_0$), but we fail to settle a conditioning argument (based on the fact that $\bu \in \ca_0\hat{\otimes} \ca_0$) from this observation.

\

\begin{remark}
Another possible line of generalization (that we will not examine in this paper though) would be the construction of a stochastic integral with respect to more general \emph{free Gaussian processes} (as defined in \cite{q-gauss}), in the same way as classical commutative rough paths theory extends beyond the Brownian case (\cite{friz-victoir}). For instance, in the spirit of \cite{coutin-qian}, we may expect the results of Section \ref{sec:gubi} to apply to any free fractional Brownian motion with Hurst index $H>\frac13$ (i.e., to any free Gaussian process $\{X_t\}_{t\geq 0}$ with covariance $\vp(X_s X_t)=\frac12\{s^{2H}+t^{2H}-\lln t-s\rrn^{2H}\}$). Besides, the algebraic reasoning that led us to the definition of the integral (see Section \ref{subsec:heuri}) can certainly be generalized to higher orders, by exhibiting the need for higher-order product iterated integrals, just as in the classical setting.
\end{remark}

\

\bigskip

\textbf{Acknowledgements}

\

We are grateful to Philippe Biane, Marek Bo{\.z}ejko and Roland Speicher for their support and encouragements. We also thank Octavio Arizmendi for bringing helpful references to our attention.

\bigskip

\end{document}